\definecolor{dark-red}{rgb}{0.6,0,0}
\definecolor{dark-green}{rgb}{0,0.6,0}
\definecolor{dark-blue}{rgb}{0,0,0.6}
\newlength{\dhatheight}
\newtheorem{theorem}{Theorem}[subsection]
\newtheorem{proposition}[theorem]{Proposition}
\newtheorem{lemma}[theorem]{Lemma}
\newtheorem{remark}[theorem]{Remark}
\newtheorem{corollary}[theorem]{Corollary}
\newtheorem{definition}[theorem]{Definition}
\newtheorem{question}[theorem]{Question}
\newtheorem{theoremss}{Theorem}[section]
\newtheorem{conjecturess}[theoremss]{Conjecture}
\newtheorem{lemmass}[theoremss]{Lemma}
\newtheorem{definitionss}[theoremss]{Definition}
\newtheorem{questionss}[theoremss]{Question}
\newcommand{\Z}{{\mathbb Z}}
\newcommand{\Q}{{\mathbb Q}}
\newcommand{\p}{\partial}
\newlist{pcases}{enumerate}{1}
\setlist[pcases]{
  label={\em{Case~\arabic*:}}\protect\thiscase.~,
  ref=\arabic*,
  align=left,
  labelsep=0pt,
  leftmargin=0pt,
  labelwidth=0pt,
  parsep=0pt
}
\newcommand{\case}[1][]{%
  \if\relax\detokenize{#1}\relax
    \def\thiscase{}%
  \else
    \def\thiscase{~#1}%
  \fi
  \item
}
\newlist{subpcases}{enumerate}{1}
\setlist[subpcases]{
  label={\em{Subcase~\Alph*:}}\protect\thiscase~,
  ref=\Alph*,
  align=left,
  labelsep=0pt,
  leftmargin=0pt,
  labelwidth=0pt,
  parsep=0pt
}
\newcommand{\subcase}[1][]{%
  \if\relax\detokenize{#1}\relax
    \def\thiscase{}%
  \else
    \def\thiscase{~#1}%
  \fi
  \item
}
\thanks{2016 {\em Mathematics Subject Classification}. 57M25, 57M27.  The author was partially supported by the National Science Foundation under NSF Grant DMS-1313559, P.I. Susan Hermiller}
\begin{document}

\title{Bridge spectra of cables of 2-bridge knots}
\author{Nicholas Owad}

\maketitle
\begin{abstract}
We compute the bridge spectra of cables of 2-bridge knots.  We also give some results about bridge spectra and distance of Montesinos knots.
\end{abstract}
%
%
%


\tableofcontents


\section{Introduction}\label{sec:intro}



Knot theory is the study of knotted curves in space.  In 1954, Horst Schubert defined an invariant called the bridge number.  This is defined as the smallest number of local maxima over all projections of the knot.  He proved a very important property of this invariant: that it is additive  minus one under connect sum. Using the fact that only the unknot has a bridge number of one, he shows that knots do not form a group under the operation of connect sum, i.e. there are no ``inverses" for the operation.

The bridge spectrum of a knot is a generalization of Schubert's bridge number and we explore some of its properties here.  In this paper, we compute the bridge spectra of a variety of classes of knots.  The classes of knots for which the bridge spectrum is known is relatively short: iterated torus knots (which include torus knots), 2-bridge knots, high distance knots, and partial results for twisted torus knots.  We add to this list all cables of 2-bridge knots and a class of generalized Montesinos knots, which includes all pretzel knots that satisfy a condition on their tangled regions.  

The bridge spectrum of a knot $K$ is a strictly decreasing list of nonnegative integers first defined by Doll \cite{doll} and Morimoto and Sakuma \cite{MS}.  This list is obtained from $(g,b)$ -splittings of a knot in $S^3$.  A {\em $(g,b)$-splitting} of a knot $K$ is a Heegaard splitting of $S^3=V_1 \cup_\Sigma V_2$ such that the genus of $V_i$ is $g$, $\Sigma$ intersects $K$ transverselly, and $V_i \cap K$ is a collection of $b$ trivial arcs for $i=1,2$. The {\em genus $g$ bridge number}  $b_g(K)$ is the minimum number $b$ for which a $(g,b)$-splitting exists.  The {\em bridge spectrum} for $K$, which we denote ${\mathbf b}(K)$, is the list 

$$(b_0(K),b_1(K),b_2(K),\ldots ).$$

Note that $b_0(K)$ is the classical bridge number, $b(K)$, first defined by Schubert \cite{Schubert}.  The fact that bridge spectrum is strictly decreasing comes from the process called {\em meridional stabilization} where, given a $(g,b)$-splitting, one can create a $(g+1,b-1)$-splitting. Thus, every bridge spectrum is bounded above componentwise by the following sequence, which is called a {\em stair-step} spectrum,

 $$\left(b_{0}\left(K\right),b_{0}\left(K\right)-1,b_{0}\left(K\right)-2,\ldots,2,1,0\right).$$  


An invariant of a bridge surface of a knot is the distance, where we are considering distance in terms of the curve complex of the bridge surface (see page \pageref{def:bridgesurface} and Definition \ref{def:dist} for relevant terms). Tomova showed in \cite{Tomova}, with results of Bachman and Schleimer from \cite{BS}, the following theorem, as stated by Zupan in \cite{Z1}:

\smallskip

\begin{theorem}\label{thm:dist}{\cite{Tomova}}
Suppose $K$ is a knot in $S^3$ with a $(0,b)$-bridge sphere $\Sigma$ of sufficiently high distance (with respect to $b$). Then any $(g',b')$-bridge surface $\Sigma ' $  satisfying $b' = b_{g'} (K)$ is the result of meridional stabilizations performed on $\Sigma$. Thus
 $${\mathbf b}(K)=\left(b_{0}\left(K\right),b_{0}\left(K\right)-1,b_{0}\left(K\right)-2,\ldots,2,1,0\right).$$  
 
\end{theorem}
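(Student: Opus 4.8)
The plan is to deduce the theorem from a comparison principle for bridge surfaces whose engine is the Rubinstein--Scharlemann graphic. Since meridional stabilization shows that the stair-step sequence bounds every bridge spectrum componentwise from above, it suffices to prove the matching lower bounds $b_{g'}(K) \geq b_0(K) - g'$; equivalently, that the minimal surfaces realizing each $b_{g'}(K)$ are obtained from $\Sigma$ by meridional stabilization alone. Along the way the same principle shows that $\Sigma$ is itself a minimal bridge sphere, so that $b_0(K) = b$.

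The core is the following comparison principle: if $\Sigma'$ is any bridge surface for $K$ that is \emph{not} obtained from $\Sigma$ by stabilizations, meridional stabilizations, and perturbations, then $d(\Sigma) \leq 2 - \chi(\Sigma' \setminus K)$. To prove it I would sweep $S^3$ out by $\Sigma$ and simultaneously by $\Sigma'$, recording the two sweep-out parameters as the coordinates of a map to the square $[0,1] \times [0,1]$. After a generic isotopy, the locus where the two level surfaces fail to meet transversely is the Rubinstein--Scharlemann graphic, a properly embedded graph in the square; off the graphic the level surfaces $\Sigma_s$ and $\Sigma'_t$ meet in a fixed isotopy class of curves.

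I would then label each region of the complement of the graphic according to whether $\Sigma'_t$ admits a compressing or cut disk lying to a fixed side of $\Sigma_s$, with the disks drawn from the two disk sets that define $d(\Sigma)$; these labels are constant on regions and transform controllably across the edges of the graphic. The technical heart---and the main obstacle---is the resulting dichotomy: either some level meets $\Sigma_s$ essentially with no compression to either side, in which case an essential intersection curve places the two defining disk sets within distance $2 - \chi(\Sigma' \setminus K)$ and so bounds $d(\Sigma)$; or the labeling is consistent enough that an innermost-disk and outermost-arc analysis isotopes $\Sigma'$ across the sweep-out, exhibiting it as a stabilization, meridional stabilization, or perturbation of $\Sigma$. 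Controlling how the trivial arcs of $K$ interact with these compressions, through the cut disks, is the feature absent from the closed Heegaard case of Scharlemann--Tomova and is where the care is required.

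Finally I would feed a minimal $(g', b_{g'})$-surface $\Sigma'$ into the comparison principle. Because meridional stabilization already forces $b_{g'}(K) \leq b - g'$, the complexity of such a minimal surface is bounded uniformly in $g'$: indeed $2 - \chi(\Sigma' \setminus K) = 2g' + 2b_{g'}(K) \leq 2b$. Hence if the distance of $\Sigma$ exceeds this bound---precisely the meaning of ``sufficiently high with respect to $b$''---the complexity alternative is impossible for every $g'$ at once, so each minimal $\Sigma'$ arises from $\Sigma$ by stabilizations, meridional stabilizations, and perturbations. Ordinary stabilizations raise the genus without lowering the bridge number and perturbations raise the bridge number, and since both can be undone, neither occurs in a surface realizing $b_{g'}(K)$. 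Thus $\Sigma'$ is exactly $g'$ meridional stabilizations of $\Sigma$, each dropping the bridge number by one, giving $b_{g'}(K) = b - g' = b_0(K) - g'$ for $0 \leq g' \leq b$ and $0$ thereafter---precisely the stair-step spectrum.
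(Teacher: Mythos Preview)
The paper does not supply a proof of this theorem at all: it is quoted from the literature (Tomova \cite{Tomova}, together with Bachman--Schleimer \cite{BS}, in the form stated by Zupan \cite{Z1}) and used as background motivation. So there is no ``paper's own proof'' to compare against.

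That said, your outline is a faithful sketch of the argument in Tomova's original paper. The comparison principle you isolate---that a bridge surface $\Sigma'$ not obtained from $\Sigma$ by stabilizations, meridional stabilizations, and perturbations forces $d(\Sigma)\le 2-\chi(\Sigma'_K)$---is exactly Tomova's main theorem, and the two-parameter sweep-out with the Rubinstein--Scharlemann graphic, labelled by compressing and cut disks on each side, is precisely her method (extending the closed Scharlemann--Tomova argument to the pair $(S^3,K)$). Your deduction of the stair-step spectrum from that principle is the standard one: bound $2-\chi(\Sigma'_K)=2g'+2b_{g'}(K)\le 2b$ via meridional stabilization, choose the distance threshold accordingly, and then rule out ordinary stabilizations and perturbations by minimality of $b_{g'}(K)$. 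One small point worth tightening if you ever write this out in full: Tomova's dichotomy also allows $\Sigma'$ to be a perturbation of a copy of $\Sigma$ or a boundary-stabilization, and the case analysis that these cannot realize a genus-$g'$ minimum (and that the remaining meridional stabilizations number exactly $g'$) deserves a sentence; your last paragraph handles this in spirit but glosses over the bookkeeping.
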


A natural question to ask is the following:

\begin{question}
Given a knot, $K$, if the bridge spectrum of $K$ is stair-step, does $K$ necessarily have a $(0,b)$-bridge sphere that is of high distance?
\end{question}

We answer this question in the negative in subsection \ref{sec:BSMontandPret} by showing the two results that follow.

Lustig and Moriah \cite{LM} define generalized Montesinos knots, which include Montesinsos knots and pretzel knots.  Generalized Montesinos knots, see Defintion \ref{def:genMont}, are defined with a set of pairs of integers, $\{ (\beta_{i,j},\alpha_{i,j}) \}_{i,j}$; when $\gcd\{ \alpha_{i,j}\} \not = 1$, we show that these generalized Montesinos knots have the property $t(K)+1=b(K)=b_0(K)$, where $t(K)$ is the tunnel number, see subsection \ref{sec:tunnel}.  For a $(g,b)$-splitting, with $b>0$, we have $t(K) \leq g+b-1$, which comes to us from Morimoto, Sakuma, and Yokota \cite{MSY}.  From these two facts, we can quickly conclude that generalized Montesinos knots with $\gcd\{ \alpha_{i,j}\} \not = 1$ have stair-step bridge spectra.  

\noindent{\textbf{Corollary}~\ref{cor:bsp}.} 
{\em 
Given a pretzel knot $K_n=K_n(p_1,\ldots,p_n)$ with $\gcd(p_1,\ldots,p_n)\not = 1$, then the primitive bridge spectrum is stair-step, i.e. ${\mathbf {\hat b}}(K_n)=(n,n-1,\ldots,2,1,0)$ and ${\mathbf b}(K_n)=(n,n-1,\ldots,3,2,0)$.
}

\smallskip

\noindent{\textbf{Proposition}~\ref{prop:dist1}.} 
{\em 
If $K_n(p_1,\ldots,p_n)$ is a pretzel knot with $n\geq 4$,  then there is a $(0,n)$-bridge surface for $K_n$ with $d(P,K_n)=1$.
}

\smallskip

By these two results, we see that stair-step bridge spectrum does not imply high distance.

Theorem \ref{thm:dist} tells us that a generic knot has stair-step bridge spectrum. It is said there is a {\em gap} at index $g$ in the bridge spectrum if $b_g(K)<b_{g-1}(K)-1$.  Most work regarding bridge spectra focuses on finding bridge spectra with gaps, see \cite{Z1}, \cite{BTZ}.  This includes Theorem \ref{thm:BSofC2b} and is the topic of most of the conjectures in section \ref{ch:generalize}.  The rest of this subsection is devoted to the statement Theorem \ref{thm:BSofC2b}.


We are interested in the behavior of the bridge spectrum under cabling, a special case of taking a satellite of a knot.  In the same paper that Schubert \cite{Schubert} defined bridge number, he proved the following, which we will make use of here.  Schultens gives a modern proof of this result in \cite{Schultens}.  See subsection \ref{sec:cable}.

\begin{theorem}\label{thm:bridgesat}{{\cite{Schubert},\cite{Schultens}}}
Let $K$ be a satellite knot with companion $J$ and pattern of index $n$. Then $b_0(K)\geq n\cdot b_0(J)$.
\end{theorem}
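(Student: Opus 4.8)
The plan is to prove that for a satellite knot $K$ with companion $J$ and pattern of index $n$, the bridge number satisfies $b_0(K) \geq n \cdot b_0(J)$. The strategy is to take a minimal bridge presentation of $K$ and use it to extract a bridge presentation of the companion $J$, showing that the wrapping of the pattern in the companion solid torus forces the intersection count to multiply by the index $n$. First I would fix a bridge sphere $\Sigma$ realizing $b_0(K)$, so that $K$ meets each of the two balls in $b_0(K)$ trivial arcs and $|K \cap \Sigma| = 2b_0(K)$. Since $K$ is a satellite, it lives inside a tubular neighborhood $N(J)$ of the companion, a solid torus $V$ whose core is $J$, and the pattern $K \subset V$ meets each meridian disk of $V$ algebraically (in fact geometrically, after isotopy) in $n$ points.

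The key steps, in order, are as follows. I would consider the companion torus $\partial V$ as an incompressible, meridionally incompressible surface in the knot exterior and study its intersection with the bridge sphere $\Sigma$. The main technical move is a thin-position or sweepout argument: one isotopes $K$ and the companion torus so that the height function associated with the bridge sphere restricts nicely to $J$, arranging that each of the two sides of the bridge sphere intersects $V$ in a controlled way. Because the pattern meets every meridian disk $n$ times, each trivial bridge arc of $K$ projects to an arc that winds around $J$, and counting maxima of the height function on $J$ induced from the maxima of $K$ shows that $J$ itself admits a bridge presentation with at most $b_0(K)/n$ bridges. Equivalently, one shows that a minimal bridge position of $K$ descends, via the degree-$n$ wrapping, to a bridge position of $J$ with at most $\lfloor b_0(K)/n \rfloor$ maxima, which yields $b_0(J) \leq b_0(K)/n$ and hence the desired inequality after rearranging.

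The hard part will be making the descent from a bridge position of $K$ to a bridge position of $J$ rigorous, since the companion torus need not be in any nice position relative to the given bridge sphere $\Sigma$ a priori. The main obstacle is controlling the intersection $\partial V \cap \Sigma$: one must eliminate inessential intersection curves (innermost disk and outermost arc surgeries, using incompressibility of $\partial V$ in the knot exterior and the trivial-arc structure on each side) so that what remains respects the product structure, and then verify that the number of maxima of $K$ really does dominate $n$ times the number of maxima of $J$. I would handle this by a careful Morse-theoretic count, applying a standard cut-and-paste cleanup to $\partial V \cap \Sigma$ followed by a level-by-level analysis of how the $n$ strands of the pattern contribute maxima, and I would appeal to Schultens's modern framework in \cite{Schultens} to organize the sweepout bookkeeping rather than reprove those foundational lemmas here.
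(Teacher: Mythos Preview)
The paper does not contain its own proof of this theorem; it is stated as a cited result from Schubert \cite{Schubert} and Schultens \cite{Schultens} and then used as a black box (specifically in the proof of Lemma~\ref{lem:genus0}). So there is nothing in the paper to compare your proposal against.

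That said, your outline is a faithful sketch of the Schultens argument the paper is citing: put $K$ in bridge (or thin) position with respect to a standard height function, analyze how the incompressible companion torus $\partial V$ sits relative to the level spheres, clean up intersections by innermost-disk and isotopy moves, and then count critical points to conclude that the induced position on $J$ has at most $b_0(K)/n$ maxima. Your identification of the hard step---controlling $\partial V \cap \Sigma$ so that the descent is rigorous---is exactly where the work lies in \cite{Schultens}. One small caution: the phrase ``algebraically (in fact geometrically, after isotopy) in $n$ points'' conflates winding number and wrapping number; for the inequality you want the geometric (wrapping) index, and this is indeed how Schubert's theorem is usually stated, so be careful which ``index'' you mean when you invoke it.
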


In section \ref{sec:proof}  we show:
\smallskip

\noindent{\textbf{Theorem}~\ref{thm:BSofC2b}.} 
{\em 
Let $K_{p/q}$ be a non-torus 2-bridge knot and $T_{m,n}$ an $(m,n)$-torus knot. If $K:={\mathsf {cable}}(T_{m,n}, K_{p/q})$ is a cable of  $K_{p/q}$ by $T_{m,n}$, then the bridge spectrum of $K$ is ${\mathbf b}(K)=(2m,m,0)$.
}
\smallskip

In section \ref{sec:notation} we give the relevant background and notation needed for the the proofs of section \ref{ch:MontAndPret} and Theorem \ref{thm:BSofC2b}.  In section \ref{sec:Alex} we give some necessary results and lemmas for the proof of the Theorem \ref{thm:BSofC2b}. Section \ref{ch:MontAndPret} has proofs of Corollary~\ref{cor:bsp} and Proposition~\ref{prop:dist1} and details about a class of knots which are stair-step but not of high distance.  In section \ref{sec:proof} we prove Theorem \ref{thm:BSofC2b}.  Finally, in section \ref{ch:generalize} we give some conjectures based on these results.


\section{Notation and Background} \label{sec:notation}


\subsection{Preliminaries}\label{subsec:preliminaries}

We assume a general knowledge of the basics of knot theory and 3-manifolds.  See \cite{Hatcher} and \cite{Rolfsen} for more detail on any topics in this subsection. A {\em knot} $K$ is an embedded copy of $S^1$ in $S^3$. More precisely, a knot $K$ is the isotopy class of images of embeddings of $S^1\hookrightarrow S^3$. A {\em link} is multiple copies of $S^1$ simultaneously embedded in $S^3$.  Two knots are {\em equivalent} if there is an ambient isotopy taking one knot to the other. Let $N(\cdot )$ and $\eta(\cdot)$ denote closed and open regular neighborhoods.  The space $E(K):=S^3-\eta(K)$ is called the {\em exterior}\label{def:exterior} of the knot $K$ and $\p E(K)$ is a torus.  We will use $|A|$ to denote the number of connected components of $A$.

Let $M$ be a 3-manifold, $S \subset M$ an embedded surface and $J\subset M$ an embedded 1-manifold. We will use $M(J)$ and $S_J$ to denote $M-\eta(J)$ and $S - \eta(J)$, respectively. We use the notation  $(M,J)$ to denote the pair of $M$ and $J$.   An embedded 1-manifold in a surface $\gamma \subset S$ is {\em essential} if $S\setminus \eta(\gamma)$ does not have a disk component in $S$.  A submanifold $M'\subset M$ is {\em proper}\label{def:proper} if $\p M' \subset \p M$. A {\em compressing disk}\label{def:compressing} $D$ for $S$ is a embedded disk in $M$ with $ D\cap S=\partial D$ but $\partial D$ does not bound a disk in $S$.  A {\em boundary compressing disk}, or {\em $\p$-compressing disk}, $\Delta$ for $S$ is an embedded disk in $M$ such that $\Delta \cap S = \gamma$  is an arc with $\gamma \subset \p \Delta$, $\gamma$ is essential in $S$ and $\p \Delta - \gamma $, another arc, is a subset of $\p M$.  The surface $S$ is  {\em incompressible} if there does not exist a compressing disk $D$ for $S$; $S$ is {\em $\p$-incompressible} if there does not exist a $\p$-compressing disk $\Delta$ for $S$.  The surface $S$ is said to be {\em essential} if $S$ is incompressible, $\p$-incompressible and not isotopic into $\p M$.


\subsection{2-bridge knots and rational tangles}\label{subsec:2bridge}

One way to create a 2-bridge knot is to start with a rational tangle.

\begin{definition} \label{2-bridge knot}
Given a rational number $p/q \in \Q$, write
\[
\frac{p}{q} =  r+\cfrac{1}{b_1-\cfrac{1}{b_2-\cfrac{1}{\ddots -\cfrac{1}{b_k } }}} 
\]  such that $r\in \Z$, $b_i \in \Z \setminus \{0\}$, then $r+[b_1,b_2, \ldots, b_k]$ denotes a {\em partial fraction decomposition} of $p/q$.

\end{definition}

It should be noted that partial fraction decompositions are not unique in general.  But if we assume the $b_i$ all have the same sign and $k$ is odd, then the partial fraction decomposition is unique, see \cite{KL}. Given a partial fraction decomposition $\frac{p}{q}= r+[b_1,b_2, \ldots, b_k]$, one can produce a diagram that is formed from a vertical 3 braid with $a_1$ half-twists between the left two strands, below that, add $a_2$ half-twists between the right two strands.  Then alternate between the left and right two strands in this way for each consecutive $a_i$. Finally, add a fourth strand on the left, cap and connect the new left two strands at the top and connect the right two strands at the bottom. The diagram we have created is called a {\em 4-plat rational tangle}\label{4platTangle}.  See Figure~\ref{fig:2bridgeTangle}.

\begin{figure}[htbp]
\begin{center}

\begin{tikzpicture}[scale = .7]



\draw (0.5,4.625) rectangle +(2,2);

\draw (1,4.625) -- +(0,-1);
\draw (2,4.625) -- +(0,-1);
\draw (1,6.625) -- +(0,1);
\draw (2,6.625) -- +(0,1);

\node at (1.5,5.625) {$\alpha,\beta$};

\draw [thick] (3.5,5.5) -- (4,5.5);
\draw [thick] (3.5,5.75) -- (4,5.75);

\draw [dashed] (5,2) rectangle +(5.5,7.25);

\draw(5.5,1.25) -- (5.5, 9.5);

\draw(7,9) -- (7, 9.5);
\draw(7,5.25) -- (7, 7.5);
\draw(7,4) -- (7, 4.25);
\draw(7,1.25) -- (7, 2.5);

\draw(8.5,1.75) -- (8.5, 2.5);
\draw(8.5,4) -- (8.5, 4.25);
\draw(8.5,5.25) -- (8.5, 5.5);
\draw(8.5,7) -- (8.5, 7.5);
\draw(8.5,9) -- (8.5, 10);

\draw(10,1.75) -- (10, 4.25);
\draw(10,5.25) -- (10, 5.5);
\draw(10,7) -- (10, 10);

\draw [domain=0:180] plot ({6.25+.75*cos(\x)}, {9.5+.75*sin(\x)});	
\draw [domain=180:360] plot ({9.25+.75*cos(\x)}, {1.75+.75*sin(\x)});


\draw [domain=0:.4] plot ({.75*cos(pi* \x r )+7.75}, .5*\x+7.5);

\draw [domain=0:1.4] plot ({-.75*cos(pi* \x r )+7.75}, .5*\x+7.5);

\draw [domain=.6:2.4] plot ({.75*cos(pi* \x r )+7.75}, .5*\x+7.5);

\draw [domain=.6:2] plot ({.75*cos(pi* \x r )+7.75}, .5*\x+8);
\draw [domain=.6:1] plot ({.75*cos(pi* \x r )+7.75}, .5*\x+8.5);


\draw [domain=0:.4] plot ({.75*cos(pi* \x r )+9.25}, .5*\x+5.5);

\draw [domain=0:1.4] plot ({-.75*cos(pi* \x r )+9.25}, .5*\x+5.5);

\draw [domain=.6:2.4] plot ({.75*cos(pi* \x r )+9.25}, .5*\x+5.5);

\draw [domain=.6:2] plot ({.75*cos(pi* \x r )+9.25}, .5*\x+6);
\draw [domain=.6:1] plot ({.75*cos(pi* \x r )+9.25}, .5*\x+6.5);

\draw [domain=0:.4] plot ({.75*cos(pi* \x r )+7.75}, .5*\x+2.5);

\draw [domain=0:1.4] plot ({-.75*cos(pi* \x r )+7.75}, .5*\x+2.5);

\draw [domain=.6:2.4] plot ({.75*cos(pi* \x r )+7.75}, .5*\x+2.5);

\draw [domain=.6:2] plot ({.75*cos(pi* \x r )+7.75}, .5*\x+3);
\draw [domain=.6:1] plot ({.75*cos(pi* \x r )+7.75}, .5*\x+3.5);

\draw [fill] (8.5,4.5) circle [radius=0.04];	
\draw [fill] (8.5,4.75) circle [radius=0.04];	
\draw [fill] (8.5,5) circle [radius=0.04];

\node at (6.5,3.25) {$a_{m}$};
\node at (8,6.25) {$a_{2}$};
\node at (6.5,8.25) {$a_{1}$};

\end{tikzpicture}

\caption{A rational tangle in the 4-plat form.}\label{fig:2bridgeTangle}
\end{center}

\end{figure}
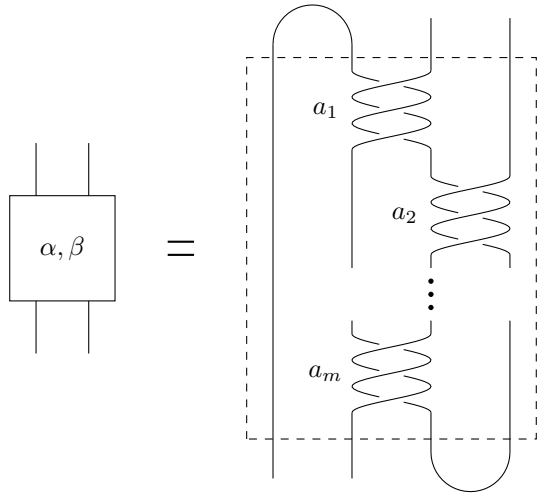

The ``pillowcase" is a term used by Hatcher and Thurston in \cite{HT}.  Technically, one should define it as $I^2\sqcup_{\p I^2}I^2$, where one copy of $I^2$ is the ``front" of the pillowcase and the other is the ``back."  A {\em 2-bridge tangle}\label{2bTangle} is given by a fraction $p/q\in \Q$ by placing lines of slope $p/q$ on the front of the pillowcase and connecting them with lines of slope $-p/q$ on the back of the pillowcase.  See Figure \ref{fig:rationaltangle}. For a more detailed treatment of this subsection, see \cite{HT}.

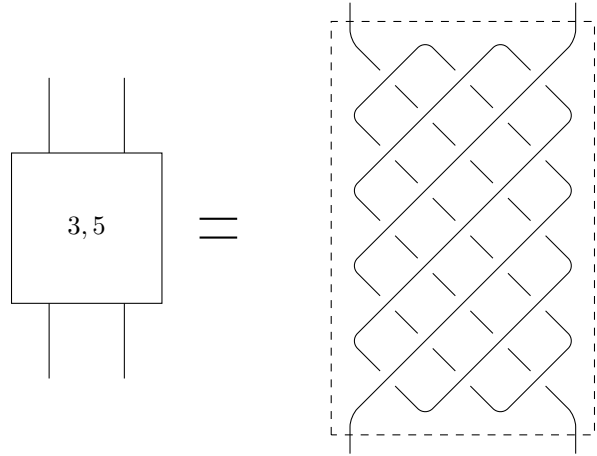
\begin{figure}[htbp]
\begin{center}

\begin{tikzpicture}



\draw (-4.5,1.5) rectangle +(2,2);

\draw (-4,1.5) -- +(0,-1);
\draw (-3,1.5) -- +(0,-1);
\draw (-4,3.5) -- +(0,1);
\draw (-3,3.5) -- +(0,1);

\node at (-3.5,2.5) {$3,5$};

\draw [thick] (-2,2.625) -- +(.5,0);
\draw [thick] (-2,2.375) -- +(.5,0);

\draw [dashed] (-.25,-.25) rectangle +(3.5,5.5);

\draw [rounded corners] (.4,3.6) -- (0,4) -- (1,5) -- (1.4,4.6);
\draw [rounded corners] (.4,2.6) -- (0,3) -- (2,5) -- (2.4,4.6);
\draw [rounded corners] (0,-.5) -- (0,0) -- (3,3) -- (2.6,3.4);
\draw [rounded corners] (.4,.6) -- (0,1) -- (3,4) -- (2.6,4.4);
\draw [rounded corners] (.4,1.6) -- (0,2) -- (3,5) -- (3,5.5);
\draw [rounded corners] (.6,.4) -- (1,0) -- (3,2) -- (2.6,2.4);
\draw [rounded corners] (1.6,.4) -- (2,0) -- (3,1) -- (2.6,1.4);

\draw (.6, 1.4) -- +(.3,-.3);
\draw (1.1, .9) -- +(.3,-.3);

\draw (.6, 2.4) -- +(.3,-.3);
\draw (1.1, 1.9) -- +(.3,-.3);
\draw (1.6, 1.4) -- +(.3,-.3);
\draw (2.1, .9) -- +(.3,-.3);
\draw [rounded corners] (2.6,.4) -- (3,0) -- (3,-.5) ;

\draw (.6, 3.4) -- +(.3,-.3);
\draw (1.1, 2.9) -- +(.3,-.3);
\draw (1.6, 2.4) -- +(.3,-.3);
\draw (2.1, 1.9) -- +(.3,-.3);

\draw (.6, 4.4) -- +(.3,-.3);
\draw (1.1, 3.9) -- +(.3,-.3);
\draw (1.6, 3.4) -- +(.3,-.3);
\draw (2.1, 2.9) -- +(.3,-.3);
\draw [rounded corners] (.4,4.6) -- (0,5) -- (0,5.5);

\draw (1.6, 4.4) -- +(.3,-.3);
\draw (2.1, 3.9) -- +(.3,-.3);

\end{tikzpicture}

\caption{A 2-bridge rational tangle of $\frac{3}{5}$.}\label{fig:rationaltangle}
\end{center}

\end{figure}


These two different diagrams yield isotopic tangles if and only if they represent the same rational number.  We can create a {\em 2-bridge knot} by taking a 4-plat rational tangle or the 2-bridge tangle and connecting the top two strands together and the bottom two strands together.



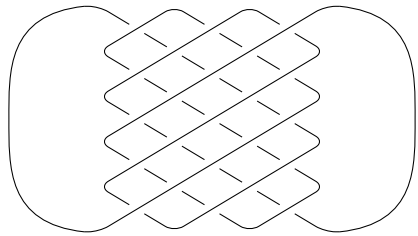
\begin{figure}
\begin{center}
\begin{tikzpicture}[yscale=.6]



\draw [rounded corners] (.4,3.6) -- (0,4) -- (1,5) -- (1.4,4.6);
\draw [rounded corners] (.4,2.6) -- (0,3) -- (2,5) -- (2.4,4.6);
\draw [rounded corners] (-.2,0) -- (0,0) -- (3,3) -- (2.6,3.4);
\draw [rounded corners] (.4,.6) -- (0,1) -- (3,4) -- (2.6,4.4);
\draw [rounded corners] (.4,1.6) -- (0,2) -- (3,5) -- (3.2,5);
\draw [rounded corners] (.6,.4) -- (1,0) -- (3,2) -- (2.6,2.4);
\draw [rounded corners] (1.6,.4) -- (2,0) -- (3,1) -- (2.6,1.4);

\draw (.6, 1.4) -- +(.3,-.3);
\draw (1.1, .9) -- +(.3,-.3);

\draw (.6, 2.4) -- +(.3,-.3);
\draw (1.1, 1.9) -- +(.3,-.3);
\draw (1.6, 1.4) -- +(.3,-.3);
\draw (2.1, .9) -- +(.3,-.3);
\draw [rounded corners] (2.6,.4) -- (3,0) -- (3.2,0) ;
\draw[] (3.2,0) to  [out=10,in=270] (4.2,2.5) to  [out=90,in=-10] (3.2,5) ;

\draw (.6, 3.4) -- +(.3,-.3);
\draw (1.1, 2.9) -- +(.3,-.3);
\draw (1.6, 2.4) -- +(.3,-.3);
\draw (2.1, 1.9) -- +(.3,-.3);

\draw (.6, 4.4) -- +(.3,-.3);
\draw (1.1, 3.9) -- +(.3,-.3);
\draw (1.6, 3.4) -- +(.3,-.3);
\draw (2.1, 2.9) -- +(.3,-.3);
\draw [rounded corners] (.4,4.6) -- (0,5) -- (-.2,5);
\draw[] (-.2,0) to  [out=170,in=270] (-1.2,2.5) to [out=90,in=190] (-.2,5) ;

\draw (1.6, 4.4) -- +(.3,-.3);
\draw (2.1, 3.9) -- +(.3,-.3);

\end{tikzpicture}

\caption{The 2-bridge knot corresponding to $\frac{3}{5}$.}\label{fig:2-bridge}
\end{center}
\end{figure}



\subsection{Bridge Spectrum}\label{sec:bridgespectrum}

Given a 3-manifold $M$ with boundary, a {\em trivial arc} is a properly embedded arc $\alpha$, see page \pageref{def:proper}, that cobounds a disk with an arc $\beta \subset \partial M$; i.e., $\alpha \cap \beta =\partial \alpha =\partial \beta$ and there is an embedded disk $D \subset M$ such that $\partial D = \alpha \cup \beta$. We call the disk $D$ a {\em bridge disk}.  A {\em bridge splitting} of a knot $K$ in $M$ is a decomposition of $(M,K)$ into $(V_1,A_1) \cup_\Sigma (V_2,A_2)$, where each $V_i$ is a handlebody with boundary $\Sigma$ and $A_i \subset V_i$ is a collection of trivial arcs for $i=1,2$.  One should note that when we exclude the knot $K$, a bridge splitting is a {\em Heegaard splitting of $M$}.   For all $g$ and $b$, a {\em $(g,b)$-splitting} of $(M,K)$ is a bridge splitting with $g(V_i)=g$ and $|A_i|=b$ for $i=1,2$.  The surface $\Sigma$ is called a {\em bridge surface}\label{def:bridgesurface}.  Throughout this paper, we will be focusing on bridge splittings of $(S^3,K)$.

\begin{definition} \label{def:genus $g$ bridge number}

The {\em genus $g$ bridge number} of a knot $K$ in $S^3$, $b_g(K)$, is the minimum $b$ such that a $(g,b)$-splitting of $K$ exists. We also require that for $b_g (K)$ to be zero, the knot must be able to be isotoped into a genus $g$ Heegaard surface.

\end{definition}

\begin{definition} \label{def:bridge spectrum}

The {\em bridge spectrum} of a knot $K$ in $S^3$, ${\mathbf b}(K)$, is the list of genus $g$ bridge numbers:

$$(b_0(K),b_1(K),b_2(K),\ldots ).$$

\end{definition}

As mentioned earlier, the genus zero bridge number is the classical bridge number, except in the case of the unknot. A simple closed curve in the boundary of a handlebody is called {\em primitive} if it transversely intersects the boundary of a properly embedded essential disk of the handlebody in a single point.  Some define $b_g(K)=0$ when $K$ can be isotoped into a genus $g$ Heegaard surface and is primitive.  Here is another invariant, $\hat{b}_g(K)$ with the following added requirement. 

\begin{definition} \label{def:primitive genus $g$ bridge number}

The {\em (primitive) genus $g$ bridge number} of a knot $K$ in $S^3$, $\hat{b}_g(K)$, is the minimum $b$ such that a $(g,b)$-splitting of $K$ exists. We also require that for $b_g (K)$ to be zero, the knot must be able to be isotoped into a genus $g$ Heegaard surface and is primitive.

\end{definition}

\begin{definition} \label{def:primitive bridge spectrum }

The {\em (primitive) bridge spectrum} of a knot $K$ in $S^3$, $ {\mathbf {\hat b}}(K)$ is the list of genus $g$ bridge numbers:

$$(\hat{b}_0(K),\hat{b}_1(K),\hat{b}_2(K),\ldots ).$$

\end{definition}

In the next subsection we will see that the process of meridional stabilization forces the bridge spectrum and primitive bridge spectrum to be strictly decreasing, see Proposition \ref{prop:BSdecrease}.  Thus, when $b_g(K)=0$, we can discard the bridge number for higher genus than $g$. The only potential difference between these two spectra is the last non-zero term in the sequence.  We make the relation between bridge spectrum and primitive bridge spectrjm rigorous for future use.

\begin{proposition}\label{prop:bsVSpbs}
For a knot $K$, if $b_g(K)\not =0$, then $b_g(K)=\hat{b}_g(K)$.  If $b_g(K)=0$, then $\hat{b}_g(K)\in \{ 0,1\}.$
\end{proposition}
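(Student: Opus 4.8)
The plan is to compare the two definitions directly, since the invariants $b_g$ and $\hat{b}_g$ differ only in the additional primitivity requirement imposed when the bridge number is allowed to be zero. First I would observe that any $(g,b)$-splitting witnessing $\hat{b}_g(K)$ is in particular a $(g,b)$-splitting, so $b_g(K)\leq \hat{b}_g(K)$ always holds; the entire content of the proposition is to control how much larger $\hat{b}_g(K)$ can be.

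For the first assertion, suppose $b_g(K)\neq 0$. Then the minimum $b$ realizing a $(g,b)$-splitting is already positive, so the zero-specific primitivity clause in Definition~\ref{def:primitive genus $g$ bridge number} is vacuous: the two definitions ask for exactly the same object, namely the minimal $b>0$ admitting a $(g,b)$-splitting. Hence $b_g(K)=\hat{b}_g(K)$. This direction is essentially a matter of unwinding the definitions and noting that the extra hypothesis only activates in the $b=0$ case.

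For the second assertion, assume $b_g(K)=0$. By definition this means $K$ can be isotoped into a genus $g$ Heegaard surface $\Sigma$. I would split into two cases according to whether the resulting embedded curve is primitive. If it is primitive, then the isotoped copy of $K$ satisfies the stronger requirement, so $\hat{b}_g(K)=0$. If it is not primitive, then the defining condition for $\hat{b}_g(K)=0$ fails, so $\hat{b}_g(K)\geq 1$; the remaining task is to produce a $(g,1)$-splitting, which shows $\hat{b}_g(K)\leq 1$ and hence $\hat{b}_g(K)=1$. The existence of such a $(g,1)$-splitting should follow by taking the curve $K$ lying in $\Sigma$ and pushing a small sub-arc of it slightly off the surface into one of the two handlebodies, creating exactly one trivial (bridge) arc on each side while leaving the genus unchanged. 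Combining the two cases gives $\hat{b}_g(K)\in\{0,1\}$.

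The main obstacle is the second case: one must verify carefully that pushing an arc of $K\subset\Sigma$ off the surface yields a genuine $(g,1)$-splitting, i.e. that the pushed-off arc is trivial (cobounds a bridge disk with an arc of $\Sigma$) in one handlebody and that the complementary arc remains trivial in the other. This requires checking that a knot embedded in a Heegaard surface can always be put into $1$-bridge position with respect to that surface, which is geometrically plausible but deserves explicit justification; the genus and the single-arc count must both be confirmed simultaneously.
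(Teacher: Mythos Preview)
Your proof is correct and follows essentially the same route as the paper's: both argue that the definitions coincide when $b_g(K)\geq 1$, and in the $b_g(K)=0$ case split according to primitivity, producing a $(g,1)$-splitting when $K\subset\Sigma$ is not primitive. Your concern at the end is overcautious: pushing one sub-arc of $K\subset\Sigma$ into $V_1$ and the complementary arc into $V_2$ (fixing the two shared endpoints on $\Sigma$) automatically makes both arcs trivial, since the isotopy tracks of the push-offs are embedded bridge disks; indeed your description here is more explicit than the paper's, which simply invokes this construction under the name ``elementary stabilization.''
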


\begin{proof}
If $ b_g(K)\geq1$, then by definition, $\hat{b}_g(K)=b_g(K)$. If $b_g(K)=0$, then $K$ embeds in a genus $g$ Heegaard surface, $\Sigma$.  If $K$ is primitive on one side of $\Sigma$, then $\hat{b}_g(K)=0$.  If $K$ is not primitive on either side of $\Sigma$, then through the process of elementary stabilization, we can create a $(g,1)$-splitting.  Hence, $\hat{b}_g(K)\leq 1$, completing the proof.  \end{proof}

For example, consider the following well known proposition:

\begin{proposition}\label{prop:BSofT}
Given a non-trivial torus knot $K=T_{p,q}$ in $S^3$, we have ${\mathbf b}(K)=(\min\{p,q\},0)$, while ${\mathbf {\hat b}}(K)=(\min\{p,q\},1,0)$.
\end{proposition}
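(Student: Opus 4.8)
The plan is to establish the bridge spectrum of a nontrivial torus knot $K = T_{p,q}$ by proving the three relevant values separately and then invoking the strict decrease of the spectrum (Proposition~\ref{prop:BSdecrease}) to terminate the list. First I would recall the classical fact, due to Schubert, that $b_0(T_{p,q}) = \min\{p,q\}$; since this is the genus zero bridge number, it gives the first entry $\min\{p,q\}$ for both spectra. For the genus one entry, the key observation is that $T_{p,q}$ sits on the standard Heegaard torus $\Sigma$ of $S^3$ by its very definition as a curve of slope $p/q$ on the boundary of a standardly embedded solid torus. Thus $K$ can be isotoped into a genus one Heegaard surface, so $b_1(K) = 0$ by Definition~\ref{def:genus $g$ bridge number}. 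This forces ${\mathbf b}(K) = (\min\{p,q\}, 0)$, since once a term of the spectrum is zero we discard all higher-genus terms.

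For the primitive bridge spectrum, I would use Proposition~\ref{prop:bsVSpbs}: since $b_1(K) = 0$, we know $\hat{b}_1(K) \in \{0,1\}$. The crux is to show that $K = T_{p,q}$ is \emph{not} primitive on either side of the Heegaard torus $\Sigma$ when $p,q \geq 2$, which forces $\hat{b}_1(K) = 1$. A curve $K$ of slope $p/q$ on $\Sigma$ intersects the boundary of the meridian disk of the inner solid torus $p$ times (up to orientation) and the boundary of the meridian disk of the outer solid torus $q$ times; since the meridians are the only essential properly embedded disks in each solid torus up to isotopy, primitivity on either side would require $\min\{p,q\} = 1$, contradicting nontriviality. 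Hence $\hat{b}_1(K) = 1$. Then since the primitive spectrum is strictly decreasing and positive at genus one, I would conclude $\hat{b}_2(K) = 0$ (the knot still lies on the genus two surface obtained by stabilizing, where it can be made primitive), yielding ${\mathbf {\hat b}}(K) = (\min\{p,q\}, 1, 0)$.

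I expect the main obstacle to be the rigorous verification of non-primitivity on both sides of the Heegaard torus, together with carefully justifying that $\hat{b}_2(K) = 0$ rather than merely $\hat{b}_2(K) \leq 1$. For the first point, I would make precise the intersection-number argument: any essential disk in a solid torus is isotopic to a meridian disk, whose boundary meets a $(p,q)$-curve in $\min(|p|,|q|)$ points after isotopy minimizing intersections (more precisely, in $|q|$ points for one solid torus and $|p|$ points for the other), so a single transverse intersection point is impossible unless one of $p,q$ equals $\pm 1$. For the second point, I would argue that stabilizing the genus one splitting to genus two produces a Heegaard surface on which $K$ can be isotoped to a primitive curve (using that a core of one of the new handles can be band-summed to make $K$ primitive), giving $\hat{b}_2(K) = 0$ and thereby terminating the primitive spectrum at $(\min\{p,q\}, 1, 0)$.
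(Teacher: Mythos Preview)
Your proposal is correct and follows essentially the same approach as the paper: Schubert for $b_0$, the tautological embedding on the Heegaard torus for $b_1(K)=0$, the meridian-disk intersection count to rule out primitivity at genus one, and an explicit genus-two embedding for $\hat b_2(K)=0$. The only minor difference is that for $\hat b_2(K)=0$ the paper simply observes that a torus knot embeds on a genus-two Heegaard surface with a single arc traversing one handle (hence meeting that handle's meridian disk once), which is a bit more direct than your stabilize-and-band-sum description, though both arrive at the same conclusion.
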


\begin{proof}
Schubert proved that $b_0(K)={\hat b}_0(K)=\min\{p,q\}$, which is greater than one for nontrivial knots. For $b_1(K)$, by definition, a torus knot embeds on a genus one surface.  Hence, $b_1(K)=0$.  For ${\hat b}_1(K)$, we see that a nontrivial torus knot cannot embed on a torus and intersect an essential disk of a genus one torus only once.  By definition of a torus knot, it must intersect the meridian disk $p$ times and the longitudinal disk $q$ times. Hence, ${\hat b}_1(K)\geq 1$, but we can easily see that we can make a $(1,1)$-splitting, thus ${\hat b}_1(K)=1$. And for ${\hat b}_2(K)$, we can easily embed a torus knot on a genus two surface with one handle having only a single arc of the knot.\end{proof}

So they are distinct invariants. But there are knots for which they coincide. The proof above also gives us part of the proof of the following proposition.  The other direction follows directly from the definition of genus one bridge number.

\begin{proposition}
A knot $K$, in $S^3$, is a torus knot if and only if $b_1(K)=0$.
\end{proposition}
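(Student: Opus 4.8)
The plan is to prove both directions of the biconditional, leveraging the fact that the forward direction is essentially already established in the proof of Proposition~\ref{prop:BSofT} above.

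For the direction ``$K$ is a torus knot $\implies b_1(K)=0$,'' I would observe that this is immediate from the definition of a torus knot together with the definition of the genus one bridge number. By definition, a torus knot $T_{p,q}$ embeds on a standard torus, which is a genus one Heegaard surface of $S^3$. Thus $K$ can be isotoped into a genus one Heegaard surface, and so $b_1(K)=0$ by Definition~\ref{def:genus $g$ bridge number}. (Note that I would use the non-primitive bridge number here, so that the subtlety about primitivity encountered in Proposition~\ref{prop:BSofT} does not arise: $b_1$ only requires the knot to lie on the surface, not to be primitive.)

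For the converse, ``$b_1(K)=0 \implies K$ is a torus knot,'' I would argue as follows. If $b_1(K)=0$, then by definition $K$ can be isotoped so that it lies inside a genus one Heegaard surface $\Sigma$ of $S^3$. But a genus one Heegaard surface of $S^3$ is a standardly embedded torus (the genus one Heegaard splitting of $S^3$ is unique up to isotopy by Waldhausen's theorem), and $\Sigma$ bounds solid tori on both sides. A knot lying on a standardly embedded torus in $S^3$ is, by definition, a torus knot (possibly the unknot, which is the trivial torus knot $T_{1,q}$). Hence $K$ is a torus knot.

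The main obstacle to watch is the converse direction, specifically the need to know that a genus one Heegaard surface of $S^3$ is standardly embedded, i.e. isotopic to the standard unknotted torus; this is where the uniqueness of the genus one Heegaard splitting of $S^3$ is invoked, and without it ``embeds in a genus one surface'' would not immediately yield ``torus knot.'' The forward direction and the definitional unwinding are routine, so the real content sits in identifying the genus one Heegaard surface with the standard torus.
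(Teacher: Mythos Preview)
Your proposal is correct and follows essentially the same approach as the paper: the forward direction is immediate from the definition of torus knots (as noted in the proof of Proposition~\ref{prop:BSofT}), and the converse follows because $b_1(K)=0$ forces $K$ onto a genus one Heegaard surface of $S^3$, which is the standard torus. The paper is terser, simply asserting that the converse ``follows directly from the definition of genus one bridge number,'' whereas you are more explicit in invoking Waldhausen's uniqueness of Heegaard splittings to identify the genus one Heegaard surface with the standard torus; this extra care is warranted and fills in what the paper leaves implicit.
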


Thus, any non-torus knot has $b_1(K)\geq 1$, which, along with the fact that bridge spectrum are strictly decreasing sequences, gives us the following proposition.

\begin{proposition}\label{prop:BSof2b}
Let $K$ be a 2-bridge knot that is not a torus knot in $S^3$; then ${\mathbf {\hat b}}(K)={\mathbf b}(K)=(2,1,0)$.  
\end{proposition}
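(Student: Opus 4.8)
The plan is to establish the bridge spectrum ${\mathbf b}(K)=(2,1,0)$ for a non-torus 2-bridge knot by pinning down each of the three entries $b_0(K)$, $b_1(K)$, and $b_2(K)$ individually, and to then argue that the primitive spectrum ${\mathbf {\hat b}}(K)$ coincides with it. The key observation that makes the whole thing short is that the bridge spectrum is a strictly decreasing sequence of nonnegative integers (Proposition~\ref{prop:BSdecrease}), so once I know the first and last entries, the middle entry is forced.

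\smallskip\noindent\emph{Determining the entries.} First, by definition a 2-bridge knot admits a $(0,2)$-splitting and is not the unknot, so $b_0(K)=2$; this is the classical bridge number, and since $b_0(K)=2\geq 1$ Proposition~\ref{prop:bsVSpbs} gives $\hat b_0(K)=b_0(K)=2$. Next I would invoke the preceding proposition, which characterizes torus knots as exactly those knots with $b_1(K)=0$: since $K$ is assumed to be a non-torus knot, $K$ does not embed in a genus one Heegaard surface, hence $b_1(K)\geq 1$. On the other hand, strict decrease of the spectrum forces $b_1(K)<b_0(K)=2$, so $b_1(K)\leq 1$. Combining the two inequalities yields $b_1(K)=1$. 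Finally, meridional stabilization applied to the $(1,1)$-splitting produces a $(2,0)$-splitting, so $b_2(K)=0$; alternatively, strict decrease forces $b_2(K)<b_1(K)=1$, hence $b_2(K)=0$. This gives ${\mathbf b}(K)=(2,1,0)$, and we truncate after the first zero as explained in the text.

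\smallskip\noindent\emph{The primitive spectrum.} It remains to check that ${\mathbf {\hat b}}(K)$ equals ${\mathbf b}(K)$ rather than differing in its last nonzero term. Since $b_0(K)=2$ and $b_1(K)=1$ are both nonzero, Proposition~\ref{prop:bsVSpbs} immediately gives $\hat b_0(K)=2$ and $\hat b_1(K)=1$. The only place the two spectra could diverge is where $b_g(K)=0$, namely at $g=2$, where Proposition~\ref{prop:bsVSpbs} only guarantees $\hat b_2(K)\in\{0,1\}$. Thus to conclude $\hat b_2(K)=0$ I must show that $K$ sits on some genus two Heegaard surface as a primitive curve, i.e. that the embedding can be arranged so that $K$ meets an essential disk of one handlebody exactly once.

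\smallskip\noindent\emph{Main obstacle.} The one genuinely nontrivial point is this primitivity claim at genus two. It is plausible that the $(2,0)$-embedding obtained from stabilizing the $(1,1)$-splitting is automatically primitive, since the extra handle can be chosen so that the single arc of $K$ running over it meets a cocore disk once; this is analogous to the computation of $\hat b_2(T_{p,q})$ in Proposition~\ref{prop:BSofT}, where a torus knot is placed on a genus two surface with one handle carrying a single arc of the knot. I would verify that the same stabilization construction, applied to the $(1,1)$-bridge position of $K$, yields a primitive curve on the genus two surface, which establishes $\hat b_2(K)=0$ and completes the proof that ${\mathbf {\hat b}}(K)={\mathbf b}(K)=(2,1,0)$.
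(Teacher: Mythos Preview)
Your argument is correct and follows essentially the same route as the paper: $b_0(K)=2$ by definition, $b_1(K)\geq 1$ since $K$ is not a torus knot, and strict decrease of the spectrum then forces $b_1(K)=1$ and $b_2(K)=0$. The only difference is that what you flag as the ``main obstacle''---primitivity at genus two---is handled in the paper directly by Proposition~\ref{prop:BSdecrease}, which is stated for the primitive spectrum $\hat b$ as well: since $\hat b_1(K)=1\geq 1$, that proposition immediately gives $\hat b_2(K)\leq 0$, and the cocore-disk verification you sketch is precisely why meridional stabilization from a $(g,1)$-splitting produces a primitive $(g{+}1,0)$-embedding, i.e.\ why Proposition~\ref{prop:BSdecrease} holds for $\hat b$ in that boundary case.
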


\subsection{Operations on bridge splittings and multiple bridge splittings}\label{sec:Operations}

This subsection is a summary of tools that we will need in the proof of our theorem.  Many come from work of numerous people but directly these can also be found in \cite{Z1}. There are three main ways to operate on a bridge surface of a knot  in $S^3$ to obtain a new bridge splitting: stabilization, perturbation, and meridional stabilization.

The genus of a surface can be increased by adding a handle which does not interact with the knot through a process called {\em elementary stabilization}.  A properly embedded arc $\alpha$, see page \pageref{def:proper}, is said to be {\em boundary parallel} in a 3-manifold $M$ if it isotopic rel boundary into $\p M$.  Let $(S^3,K)=(V_1,A_1) \cup_\Sigma (V_2,A_2)$ and let $\alpha$ be a boundary parallel arc in $V_1$ such that $\alpha\cap A_1 = \emptyset$. Then let $W_1=V_1 - \eta(\alpha)$, $W_2=V_2\cup N(\alpha)$, and $\Sigma'=\p W_1=\p W_2$.  Then  $(S^3,K)=(W_1,A_1) \cup_{\Sigma'} (W_2,A_2)$ is a new bridge splitting with the genus of $\Sigma'$ one higher than $\Sigma$.  We can also run this process in reverse.  If $D_i$ are compressing disks in $(V_i,A_i)$ for $\Sigma - \eta(K)$, and $\vert D_1\cap D_2\vert =1$, then $\p N(D_1\cup D_2)$ is a 2-sphere which intersects $\Sigma$ in a single curve.  Then compression along this curve yields a new bridge surface $\Sigma''$ of lower genus, and $\Sigma$ is said to be {\em stabilized}.

The number of trivial arcs in $A_1$ and $A_2$ can be increased by one each, through the process of {\em elementary perturbation}.  Add in a canceling pair of trivial arcs in a sideways ``S" shape, cutting through the surface; i.e. a strand which passed down through the surface transversally can be perturbed and a subsection of the arc can be brought up through the surface again, creating two new arcs. Again, for the reverse direction, if there are two bridge disks on either side of $\Sigma$, which intersect in a single point contained in $J$, one may construct an isotopy which cancels two arcs of $A_1$ and $A_2$, creating a new surface $\Sigma''$, and $\Sigma$ is {\em perturbed}. 

Bridge spectra are always strictly decreasing sequences.  To see this, take any $(g,b)$-splitting of a knot.  Then, by definition, we have a decomposition of $(S^3,k)$ as  $(V_1,A_1) \cup_\Sigma (V_2,A_2)$.  Intuitively, take any trivial arc $\alpha$ in, say, $A_1$; then $N(\alpha)\subset V_1$ is a closed neighborhood of $\alpha$ in $V_1$. Take this neighborhood from $V_1$ and move it to $V_2$, which produces one higher genus handlebodies, and one less trivial arc. More carefully, let $W_1=(V_1-\eta(\alpha))$ and $W_2=(V_2 \cup N(\alpha))$.  Let $B_1 = A_1 - \{\alpha\}$, and $B_2 = A_2 \cup \alpha$. Then $B_2$ is $A_2$ with two arcs combined into a single arc by connecting them with $\alpha$.  Also, notice that $g(W_i)=g(V_i)+1$.   Thus, we have  $(S^3,k)=(W_1,B_1) \cup_{\Sigma'} (W_2,B_2)$, which is a $(g+1,b-1)$-splitting. This process is called {\em meridional stabilization}\label{def:MeridionalStabilize}. This process proves the following proposition.

\begin{proposition}\label{prop:BSdecrease}
If $K$ is a knot in $S^3$, with $\hat{b}_g(K)\geq1$, then $\hat{b}_{g+1}(K) \leq \hat{b}_g(K)-1$.  Similarly, if ${b}_g(K)\geq1$, then ${b}_{g+1}(K) \leq {b}_g(K)-1$.  
\end{proposition}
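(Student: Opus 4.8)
The plan is to prove the proposition by explicitly carrying out the meridional stabilization construction that has already been described informally in the paragraph immediately preceding the statement, and verifying that it produces a valid bridge splitting of the required genus and bridge number. The key observation is that the proposition is really just a clean restatement of the fact that meridional stabilization takes a $(g,b)$-splitting to a $(g+1,b-1)$-splitting, so the work is in checking that the output is genuinely a \emph{valid} splitting (trivial arcs on each side, handlebodies of the correct genus) and then deducing the inequality on the minima.

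First I would start from the hypothesis $\hat b_g(K)\geq 1$ (the $b_g$ case being identical). By definition of $\hat b_g(K)$ there exists a $(g,b)$-splitting $(S^3,K)=(V_1,A_1)\cup_\Sigma(V_2,A_2)$ with $b=\hat b_g(K)\geq 1$. Since $b\geq 1$, the collection $A_1$ is nonempty, so I may select a trivial arc $\alpha\in A_1$ with bridge disk $D$. I would then form $W_1=V_1-\eta(\alpha)$ and $W_2=V_2\cup N(\alpha)$, set $\Sigma'=\partial W_1=\partial W_2$, and define $B_1=A_1\setminus\{\alpha\}$ and $B_2$ to be $A_2$ together with $\alpha$, where the two arc-endpoints of $\alpha$ on $\Sigma$ now join two previously separate trivial arcs of $A_2$ into a single arc running through the newly attached handle $N(\alpha)$.

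The main steps of verification are then: (i) confirm that $W_1$ and $W_2$ are handlebodies with $g(W_i)=g(V_i)+1=g+1$, which follows because removing a neighborhood of a properly embedded boundary-parallel arc from a handlebody, and dually attaching a one-handle, each raise the genus by one; (ii) confirm that $B_1$ is still a collection of trivial arcs in $W_1$, which is immediate since the remaining arcs of $A_1$ and their bridge disks survive intact away from $\eta(\alpha)$; and (iii) confirm that $B_2$ is a collection of trivial arcs in $W_2$, so that the new piece $\alpha$ together with the two arcs it connects becomes a single trivial arc cobounding a bridge disk in $W_2$. Having established that $(S^3,K)=(W_1,B_1)\cup_{\Sigma'}(W_2,B_2)$ is a genuine $(g+1,b-1)$-splitting, I would conclude that a $(g+1,b-1)$-splitting exists, and hence by the defining minimality of $\hat b_{g+1}(K)$ we get $\hat b_{g+1}(K)\leq b-1=\hat b_g(K)-1$.

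I expect the main obstacle to be step (iii): showing that the combined arc in $B_2$ is actually \emph{trivial} in $W_2$. The subtlety is that two originally distinct trivial arcs of $A_2$, each cobounding its own bridge disk, are being joined by $\alpha$ across the attached handle, and one must produce a single bridge disk for the concatenated arc in the enlarged handlebody $W_2$. The natural approach is to use the bridge disk $D$ for $\alpha$ (which lies in $V_1$ but, after the neighborhood of $\alpha$ is transferred, can be pushed to lie in $W_2$) together with the bridge disks for the two arcs of $A_2$, and to band these disks together along $\partial W_2$ to exhibit the required embedded disk. Everything else is bookkeeping: counting $|B_1|=b-1$ and $|B_2|=b-1$, checking the genus count, and invoking the definition of $\hat b_{g+1}(K)$ as a minimum. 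Since the $b_g$ statement does not carry the primitivity condition, its proof is word-for-word the same once one drops every reference to primitivity.
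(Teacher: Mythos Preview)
Your proposal is correct and follows exactly the paper's approach: the paper's proof is simply the sentence ``This process proves the following proposition,'' referring to the meridional stabilization construction described in the preceding paragraph, and you have spelled out that construction with additional care (in particular your step~(iii), which the paper leaves implicit). One small point worth adding in the $\hat b_g$ case when $\hat b_g(K)=1$: after meridionally stabilizing the unique arc, $K$ becomes the core of the newly attached handle, so the cocore disk meets $K$ once and $K$ is primitive, giving $\hat b_{g+1}(K)=0$ as required.
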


The next corollary is immediate from Proposition \ref{prop:BSdecrease}.

\begin{corollary}\label{cor:boundedabove}
If $K$ is a knot in $S^3$, then its bridge spectrum and primitive bridge spectrum are bounded above, component-wise, by $( b_0(K), b_0(K)-1, b_0(K)-2, \ldots )$. Equivalently, for every knot $K$, and every $g\leq b_0(K)$, there is a $(g,b_0(K)-g)$-splitting for $K$.
\end{corollary}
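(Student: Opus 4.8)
The plan is to prove the ``Equivalently'' reformulation first, by induction on $g$, and then to read off the component-wise bound on the bridge spectrum directly from the definition of $b_g(K)$ as a minimum. The entire argument is an iteration of the meridional stabilization construction described just before Proposition~\ref{prop:BSdecrease}, so the corollary really is ``immediate'' once that construction is set up.

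First I would fix a knot $K$ and write $b:=b_0(K)$. By definition of the genus zero bridge number there is a $(0,b)$-splitting $(S^3,K)=(V_1,A_1)\cup_\Sigma(V_2,A_2)$; this is the base case of the induction, being a $(0,b-0)$-splitting. For the inductive step, suppose that for some $g$ with $0\le g<b$ a $(g,b-g)$-splitting exists. Since $g<b$, the number of trivial arcs $b-g$ is at least one, so the hypotheses of meridional stabilization are satisfied: choosing a single trivial arc $\alpha$ on one side of the splitting and pushing its neighborhood across the bridge surface produces a splitting of genus one higher with one fewer arc, that is, a $(g+1,(b-g)-1)=(g+1,b-(g+1))$-splitting. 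This advances the induction and shows that a $(g,b-g)$-splitting exists for every $g$ with $0\le g\le b$; in particular the induction runs all the way to the terminal value $g=b$, where the last meridional stabilization empties one handlebody of arcs and isotopes $K$ onto a genus $b$ Heegaard surface, giving a $(b,0)$-splitting.

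Having produced these splittings, the bound follows formally: since $b_g(K)$ is by definition the least $b'$ for which a $(g,b')$-splitting exists, the existence of a $(g,b-g)$-splitting forces $b_g(K)\le b_0(K)-g$ for every $g\le b_0(K)$, which is precisely the assertion that ${\mathbf b}(K)$ is bounded above component-wise by $(b_0(K),b_0(K)-1,\ldots,2,1,0)$. The same construction applies verbatim to the primitive bridge spectrum, using the $\hat b$ half of Proposition~\ref{prop:BSdecrease}; equivalently one can run the induction on $\hat b_g(K)$ directly, splitting into the case $\hat b_g(K)\ge1$ (handled by the proposition) and the case $\hat b_g(K)=0$ (where stabilizing the containing Heegaard surface gives $\hat b_{g+1}(K)=0$ as well, and $0\le b_0(K)-(g+1)$ whenever $g+1\le b_0(K)$).

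The one point that genuinely requires care — and hence the main obstacle — is verifying that a meridional stabilization can in fact be performed at each stage, i.e. that a trivial arc always remains to be pushed across $\Sigma$. This is exactly the requirement that the current arc count $b-g$ be at least one, which is guaranteed by the standing inequality $g<b$ maintained throughout the induction; it fails only at the final value $g=b$, which is the already-attained endpoint. Apart from this bookkeeping on the number of arcs, and the mild check that the terminal step really deposits $K$ on a genus $b$ Heegaard surface rather than inside a handlebody, the proof is a routine induction with no further content.
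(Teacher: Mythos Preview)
Your argument is correct and is exactly the approach the paper has in mind: the paper's entire proof is the single sentence ``immediate from Proposition~\ref{prop:BSdecrease},'' and you have simply unwound that into the explicit induction on $g$ via iterated meridional stabilization. Your extra care with the terminal step $g=b_0(K)$ and with the primitive variant is more than the paper supplies, but it is consistent with and elaborates on the same idea.
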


For a bridge surface $\Sigma$ in $(S^3,K)$, one can sometimes find two bridge disks on opposite sides of $\Sigma$ which have two points of intersection in $K$.  In this case, the component of $J$ is isotopic into $\Sigma$ and $\Sigma$ is called {\em cancelable.}

A $(g,b)$-surface for a knot $K$ is said to be {\em irreducible} if it is not stabilized, perturbed, meridionally stabilized, or cancelable.  Thus, if $b_g(K)<b_{g-1}(K)-1$, then a $(g,b)$-surface $\Sigma$ satisfying $b=b_g(K)$ must be irreducible. 

In this paragraph, we will use definitions on page \pageref{def:compressing}. Let $\Sigma$ be a bridge surface for $(S^3,K)$ which yields the splitting $(S^3,K) =(V_1,A_1) \cup_\Sigma (V_1,A_1)$. Then $\Sigma $ is called {\em weakly reducible} if there exist disjoint disks $D_1$ and $D_2$, that are either both compressing, both bridge, or one of each, such that $D_i \subset (V_i,A_i)$ for $i=1,2$, for $\Sigma_J$. If $\Sigma$ is not weakly reducible, perturbed, or cancelable, then $\Sigma$ is called {\em strongly irreducible.} By considering bridge disks as embedded in $M(J)$, one can see that perturbed and cancelable surfaces will be weakly reducible; hence, in $M(J)$, $\Sigma$ is strongly irreducible if and only if it is not weakly reducible.

For a more details about the following concepts, see \cite{HS}.  Let $F$ be a disjoint union of closed oriented surfaces. A {\em compression body} $C$ is a handlebody or the 3-manifold obtained by attaching 1-handles to $F\times \{1\}\subset F\times I$.  Then let $\p_-C=F\times \{0\}$ and let $\p_+C=\p C - \p_-C$.  An arc in a compression body is said to be {\em vertical} if it is isotopic to $\{x\}\times I$ for $x\in F$. Next, a {\em multiple bridge splitting} is the following: let $(M,J)$ contain a collection $\mathscr{S}=\{ \Sigma_0,S_1,\Sigma_1,\ldots,S_d,\Sigma_d\}$ of disjoint surfaces transverse to $J$, such that $(M,J)$ cut along $\mathscr{S}$ is a collection of compression bodies containing trivial arcs $\{ (C_0,\tau_0),(C_0',\tau_0'),\ldots,(C_d,\tau_d),(C_d',\tau_d')\}$, where

\begin{itemize}
\item $(C_i,\tau_i)\cup_{\Sigma_i}(C_i',\tau_i')$ is a bridge splitting of a submanifold $(M_i,J_i)$, where $M_i = C_i \cup C_i'$ and $J_i = \tau_i \cup \tau_i'$, for each $i$,
\item $\p_-C_i=\p_-C_{i-1}'=S_i$ for $1\leq i \leq d$,
\item $\p M = \p_-C_0 \cup \p_-C_d'$, and
\item $J=\bigcup_{i=1}^d(\tau_i \cup \tau_i')$.
\end{itemize}

The surfaces $\Sigma_i$ are called {\em thick} and the surfaces $S_j$ are called {\em thin}.  The thick surface $\Sigma_i$ is {\em strongly irreducible} if it is strongly irreducible in the manifold $(C_i,\tau_i)\cup_{\Sigma_i}(C_i',\tau_i')$, and a multiple bridge splitting is called {\em strongly irreducible} if each thick surface is strongly irreducible and no compression body is trivial. A compression body is trivial if it is homeomorphic to $\Sigma_i \times I$ with $\tau_i$ only vertical arcs.   This leads us to the following theorem of Hayashi and Shimokawa \cite{HS}, which we present in the same way that Zupan does in \cite{Z1}. Theorem \ref{thm:HS} is the basis for one of the two major cases in the proof of Theorem \ref{thm:BSofC2b}.

\begin{theorem}\label{thm:HS}{{\cite{HS},\cite[Theorem 2.8]{Z1}}}
Let $M$ be a 3-manifold containing a 1-manifold J. If $(M,J)$ has a strongly irreducible multiple bridge splitting, then $\p (M-\eta(J))$ and every thin surface are incompressible.  On the other hand, if $\p (M-\eta(J))$ is incompressible in $ M-\eta(J)$ and $\Sigma$ is a weakly reducible bridge splitting for $(M,J)$, then $(M,J)$ has a strongly irreducible multiple bridge splitting $\{ \Sigma_0,S_1,\Sigma_1,\ldots,S_d,\Sigma_d\}$  satisfying
$$ g(\Sigma)= \sum_{i=0}^d g(\Sigma_i) -\sum_{i=1}^d g(S_i).$$
\end{theorem}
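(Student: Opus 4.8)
The plan is to prove this by a relative \emph{thin position} (untelescoping) argument in the style of Scharlemann--Thompson, adapted to the pair $(M,J)$ exactly as in the generalized Heegaard splitting theory underlying \cite{HS}. The key device is a complexity on multiple bridge splittings: to each thick surface $\Sigma_i$ associate the number $-\chi(\Sigma_i - \eta(J))$, and to the whole splitting associate the multiset of these numbers, ordered in non-increasing order and compared lexicographically. Every operation below will be tracked against this complexity.

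For the first assertion I would argue by contradiction. Suppose the multiple bridge splitting is strongly irreducible but some thin surface $S_i$ (or a component of $\partial(M-\eta(J))$) admits a compressing disk $D$. Since $S_i=\partial_- C_i=\partial_- C_{i-1}'$ is a negative boundary component shared by two adjacent compression bodies, I would isotope $D$ into one of the bordering pieces $(C_i,\tau_i)\cup_{\Sigma_i}(C_i',\tau_i')$ and, using standard innermost-disk and outermost-arc surgery, convert it into a pair of disjoint compressing or bridge disks lying on opposite sides of the thick surface $\Sigma_i$ in $M(J)$. This exhibits $\Sigma_i$ as weakly reducible, and since in $M(J)$ weak reducibility is equivalent to the failure of strong irreducibility (as noted in the excerpt), this contradicts the hypothesis. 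The same mechanism rules out a compressing disk for $\partial(M-\eta(J))$.

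For the converse, start from the weakly reducible bridge splitting $\Sigma$ of $(M,J)$, with disjoint disks $D_1\subset (V_1,A_1)$ and $D_2\subset (V_2,A_2)$ guaranteed by weak reducibility. The central move is a single \emph{untelescoping}: choose a maximal collection of disjoint compressing/bridge disks on the $V_2$ side and a disjoint maximal collection on the $V_1$ side which together are disjoint, compress $\Sigma$ simultaneously along both families, and read off from the result a thick--thin--thick string $\Sigma^-, S, \Sigma^+$. One checks that compressing a compression body along such disks again yields compression bodies (so the pieces of the decomposition remain admissible) and that each compression raises $\chi(\cdot-\eta(J))$ by $2$, so both new thick surfaces have strictly smaller complexity than $\Sigma$. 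Among all multiple bridge splittings of $(M,J)$ obtained from $\Sigma$ by iterating this move, select one of minimal complexity. If some thick surface were still weakly reducible we could untelescope it further and strictly lower the complexity, and if some compression body were trivial (a product $S\times I$ with only vertical arcs) we could amalgamate it into its neighbor, again lowering the complexity or the number of surfaces; minimality therefore forces strong irreducibility of every thick surface and nontriviality of every compression body. Here the hypothesis that $\partial(M-\eta(J))$ is incompressible is exactly what prevents the compressions from running off to the boundary and guarantees that the process terminates on genuine, essential thin surfaces.

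Finally, the genus identity is an Euler-characteristic bookkeeping that is invariant under each untelescoping move, so it suffices to verify it for a single move: untelescoping one genus-$g$ thick surface into $\Sigma^-, S, \Sigma^+$ satisfies $g=g(\Sigma^-)+g(\Sigma^+)-g(S)$, because amalgamation along $S$ (tubing $\Sigma^-$ to $\Sigma^+$ along the compressed handles) reconstructs the original surface, and the handle count of amalgamation gives precisely this relation. Summing over all moves yields $g(\Sigma)=\sum_{i=0}^d g(\Sigma_i)-\sum_{i=1}^d g(S_i)$. I expect the main obstacle to be making the untelescoping move rigorous: choosing the two disk families compatibly, proving the compressed pieces really are compression bodies with trivial-arc systems, and establishing termination and minimality simultaneously while handling compressing disks and bridge disks on the same footing in $M(J)$.
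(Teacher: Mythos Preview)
The paper does not prove this theorem at all; it is quoted verbatim from Hayashi--Shimokawa \cite{HS} (in Zupan's reformulation \cite[Theorem~2.8]{Z1}) and used as a black box in Case~2 of the proof of Theorem~\ref{thm:BSofC2b}. There is therefore no in-paper argument to compare your proposal against.

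That said, your sketch is the standard Scharlemann--Thompson untelescoping argument that underlies \cite{HS}, and the overall architecture is right. Two places would need tightening if you were to write it out in full. First, in the incompressibility direction, a compressing disk $D$ for a thin surface $S_i$ does not a priori live inside one of the two adjacent blocks $(C_i,\tau_i)\cup_{\Sigma_i}(C_i',\tau_i')$; one has to isotope $D$ off the other thick surfaces first (an innermost-circle argument using strong irreducibility of those thick surfaces), and only then does the conversion to a weak-reduction pair go through. Second, the genus identity is most cleanly proved as the Euler-characteristic statement $\chi(\Sigma)=\sum_i\chi(\Sigma_i)-\sum_i\chi(S_i)$, which is immediate from the fact that amalgamation inverts compression; your single-step formula $g=g(\Sigma^-)+g(\Sigma^+)-g(S)$ is only literally correct when the compressed pieces are connected, so the translation to genus requires a remark about connectedness (which in the applications of this paper is automatic).
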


\subsection{Cable spaces and Cables}\label{sec:cable}

A cable space, loosely, is the solid torus with a torus knot taken out of its interior.  For more information on cable spaces, see \cite{GL} and \cite{Z1}. More precisely, let $T_{m,n}$ be the torus knot on the standardly embedded torus in $S^3$ intersecting the meridian transversely in $m$ points and intersecting the longitude transversely in $n$ points. Let $V$ be the solid torus $S^1\times D^2$ with the torus knot pushed into the interior of $V$ off the boundary.  The {\em cable space}\label{def:cablespace} $C_{m,n}$ is $V-\eta(T_{m,n})$. Cable spaces are Seifert fibered. For more information on Seifert fibered spaces, see Waldhausen \cite{Wald}.  There one will find that essential surfaces, see page \pageref{def:proper}, in Seifert fibered spaces are either vertical or horizontal. A {\em vertical} surface in a Seifert fibered space made up of a union of fibers. A {\em horizontal} surface in a Seifert fibered space is transverse to any fiber it intersects. In a cable space $C_{p,q}$, the notation $\p_+C_{p,q}$ is used to denote the boundary of the solid torus $V$ and $\p_-C_{p,q}$ denotes the boundary of the $(p,q)$-torus knot.  

The following are Zupan's Lemmas 3.2 and 3.3.

\begin{lemma}\label{lem:Z3.2}{{\cite[Lemma 3.2]{Z1}}}
Suppose $S\subset C_{p,q}$ is incompressible.  If each component of $S\cap \p_+C_{p,q}$ has integral slope, then each component of $S\cap \p_-C_{p,q}$  also has integral slope.\end{lemma}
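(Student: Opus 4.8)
The plan is to exploit that $C_{p,q}$ is Seifert fibered, apply Waldhausen's classification of essential surfaces, and then pin down the boundary slope on $\partial_-C_{p,q}$ by a homological computation in $H_1(C_{p,q})$.

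First I would reduce to the case that $S$ is essential and orientable. Since $S$ is incompressible, I perform a maximal sequence of $\partial$-compressions and discard any resulting inessential or closed components; on a boundary torus all essential boundary curves are parallel, so a $\partial$-compression merely performs a band move between parallel curves and preserves the slopes of $S\cap\partial_+C_{p,q}$ and $S\cap\partial_-C_{p,q}$. Replacing $S$ by the boundary of a regular neighborhood if it is one-sided does not change these slopes either, so it suffices to treat an essential orientable surface. As essential surfaces in a Seifert fibered space are isotopic to vertical or horizontal surfaces, I may handle each component of $S$ according to its type.

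For a vertical component I record the two fiber slopes. The regular fiber on $\partial_+C_{p,q}=\partial V$ is the $(p,q)$-curve, of class $p\lambda_+ + q\mu_+$ and hence of slope $q/p$, which is \emph{non}-integral since $p\geq 2$ and $\gcd(p,q)=1$ for a nontrivial torus knot; the regular fiber on $\partial_-C_{p,q}$ is the surface-framed longitude of $T_{p,q}$, of integral slope $pq$. A vertical component meets each boundary torus it touches in regular fibers, so the hypothesis that $S\cap\partial_+C_{p,q}$ has integral slope prevents any vertical component from meeting $\partial_+C_{p,q}$, while any vertical component meeting $\partial_-C_{p,q}$ does so in curves of integral slope $pq$, which already yields the conclusion for such components. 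The horizontal case is the crux. A horizontal surface is transverse to every fiber, meeting a regular fiber in a fixed number $d>0$ of points all of the same sign, so its total boundary classes $[\partial_\pm S]=K_\pm c_\pm$ satisfy $[\partial_\pm S]\cdot[f_\pm]=\pm d\neq 0$, forcing $K_\pm\neq 0$; here $c_\pm$ is the primitive slope class on $\partial_\pm C_{p,q}$ and $f_\pm$ the fiber. I then use $[\partial S]=0$ in $H_1(C_{p,q})\cong\mathbb{Z}^2$, with basis $g=[\lambda_+]$, $m=[\mu_-]$ and boundary classes $[\mu_+]=p\,m$ (the meridian disk of $V$ meets $K$ in $p$ points) and $[\lambda_-]=p\,g$ (the cabling annulus shows $f_-$ is homologous to $p\lambda_+ + q\mu_+$, and $f_-=\lambda_- + pq\,\mu_-$). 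Writing $c_+=\lambda_+ + v\mu_+$ (integral slope $v$) and $c_-=t\lambda_- + s\mu_-$ with $\gcd(t,s)=1$, vanishing of the $g$- and $m$-coefficients of $[\partial_+ S]+[\partial_- S]$ gives $K_+=-K_- t p$ and $K_+ v p + K_- s = 0$, whence $s = t v p^2$. Since $\gcd(t,s)=1$ this forces $t=\pm 1$, so $S\cap\partial_-C_{p,q}$ has integral slope $s/t$.

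The main obstacle is the horizontal case, specifically keeping the framing and orientation conventions consistent: one must correctly identify the two fiber slopes ($q/p$ on $\partial_+C_{p,q}$ and $pq$ on $\partial_-C_{p,q}$), compute the presentation of $H_1(C_{p,q})$ together with the classes of $\mu_+,\lambda_+,\mu_-,\lambda_-$, and verify that horizontality genuinely forces $K_\pm\neq 0$, so that the relation $[\partial S]=0$ is not vacuous and actually yields $s=tvp^2$. By contrast, the reductions that $\partial$-compressions and passage to the orientable double preserve boundary slopes are routine and need only be stated with care.
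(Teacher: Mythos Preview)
The paper does not give its own proof of this lemma; it simply quotes Zupan's result as \cite[Lemma~3.2]{Z1} and uses it as a black box. So there is no in-paper proof to compare against.

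That said, your argument is a correct, self-contained proof along exactly the lines one would expect (and presumably close to Zupan's). The reduction to essential orientable surfaces via $\partial$-compressions and passing to the orientable double is standard and preserves boundary slopes on torus boundary components. The vertical/horizontal dichotomy from Waldhausen then cleanly separates the two cases. Your identification of the fiber slopes, $q/p$ on $\partial_+C_{p,q}$ and $pq$ on $\partial_-C_{p,q}$, is correct under the convention $p\ge 2$, $\gcd(p,q)=1$, which is the standing assumption for a genuine cable space; this is what rules out vertical components touching $\partial_+C_{p,q}$ under the hypothesis. In the horizontal case your homology bookkeeping is right: the meridian disk of $V$ punctured $p$ times gives $[\mu_+]=p[\mu_-]$, and the cabling annulus gives $[\lambda_-]=p[\lambda_+]$, whence $[\partial S]=0$ forces $s=tvp^2$ and $|t|=1$. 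The only point worth flagging is that the claim ``$K_\pm\neq 0$'' really does need the horizontality (transversality to the boundary fibration forces nonempty boundary on each boundary torus), which you note; everything else is routine.
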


\begin{lemma}\label{lem:Z3.3}{{\cite[Lemma 3.3]{Z1}}}
Suppose $S\subset C_{p,q}$ is incompressible.  Then $S\cap \partial_+C_{p,q}$ is meridional if and only if  $S\cap \partial_-C_{p,q}$ is meridional.
\end{lemma}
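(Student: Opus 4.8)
The plan is to exploit the Seifert fibered structure of $C_{p,q}$ together with the solid torus $V$ in which it sits. Recall that $C_{p,q}=V-\eta(T_{p,q})$ with $V=S^1\times D^2$, that the regular fiber is the $(p,q)$-curve, and that the core of $V$ is the unique exceptional fiber; thus $C_{p,q}$ Seifert-fibers over an annulus with one cone point. On $\partial_+C_{p,q}=\partial V$ the fiber is the torus-knot slope $p\lambda+q\mu$, where $\mu,\lambda$ are the meridian and longitude of $V$, and on $\partial_-C_{p,q}=\partial\eta(T_{p,q})$ the fiber is a longitude $\ell_-$ of $T_{p,q}$, while the meridian $\mu_-$ bounds a disk in $\eta(T_{p,q})$. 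First I would reduce to the case that $S$ is essential: since $S$ is incompressible, I would $\partial$-compress it maximally and discard $\partial$-parallel and closed pieces, checking that this does not alter whether $S\cap\partial_\pm C_{p,q}$ is meridional; then, as recalled above (Waldhausen \cite{Wald}), $S$ is isotopic to a vertical or a horizontal surface.

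If $S$ is vertical, then $S\cap\partial_+C_{p,q}$ and $S\cap\partial_-C_{p,q}$ are unions of fibers. The fiber slope $p\lambda+q\mu$ on $\partial_+C_{p,q}$ is not the meridian $\mu$ (as $p\neq 0$), and the fiber slope $\ell_-$ on $\partial_-C_{p,q}$ satisfies $\ell_-\cdot\mu_-=1$, so it is not meridional either. Hence in the vertical case neither intersection is meridional and the biconditional holds vacuously.

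If $S$ is horizontal, I would compute slopes homologically inside $V$. Orient $S$; because $S$ is transverse to the oriented fibration, $\partial S$ meets each fiber with constant sign, so the boundary curves on each torus carry a common orientation and no homological cancellation occurs. Write $[S\cap\partial_+C_{p,q}]=a\lambda+b\mu$ and $[S\cap\partial_-C_{p,q}]=c\,\mu_-+d\,\ell_-$ in the respective boundary tori. Since $\partial S$ bounds $S$ in $C_{p,q}\subset V$, it is null-homologous in $H_1(V)=\Z\langle\text{core}\rangle$. Reading off the winding about the core — $\mu$ and $\mu_-$ contribute $0$, the longitude $\lambda$ contributes $1$, and $\ell_-$ (isotopic to $T_{p,q}$) contributes $p$ — gives the single relation $a+p\,d=0$. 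By the coherence of orientations, $S\cap\partial_+C_{p,q}$ is meridional precisely when $a=0$ and $S\cap\partial_-C_{p,q}$ is meridional precisely when $d=0$; since $p\neq 0$, the relation $a=-p\,d$ forces $a=0$ if and only if $d=0$, which is the asserted equivalence. (As a consistency check, the same relation applied to the integral slope $\lambda+k\mu$ recovers Lemma \ref{lem:Z3.2}.)

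The hard part will be the reduction to an essential surface: a $\partial$-compression can in principle change a boundary slope, so I must verify that meridionality of $S\cap\partial_\pm C_{p,q}$ is preserved — for instance by showing that a meridionally bounded incompressible surface is either already essential or $\partial$-compresses into meridian disks of $V$ (respectively of $\eta(T_{p,q})$), which stay meridional — and I must justify the coherent-orientation claim used above via the branched-covering description of horizontal surfaces. Once these two technical points are secured, the vertical/horizontal dichotomy together with the single homological identity $a+p\,d=0$ closes the argument.
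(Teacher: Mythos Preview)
The paper does not prove this lemma; it is quoted verbatim from \cite[Lemma~3.3]{Z1}, so there is no in-paper argument to compare against.  That said, your route---reduce to an essential surface, invoke the vertical/horizontal dichotomy for Seifert fibered spaces, and then read off the homological relation $a+p\,d=0$ in $H_1(V)$---is a standard and correct way to establish the statement for \emph{essential} surfaces, and your treatment of the vertical and horizontal cases is fine.  (The parenthetical about recovering Lemma~\ref{lem:Z3.2} from the same identity needs a bit more care with the bookkeeping of $n_\pm$ and signs, but that is a side remark.)

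The one point that is not yet nailed down is exactly the one you flag: the passage from ``incompressible'' to ``essential''.  A $\partial$-compression preserves the total homology class $[\partial S]$ in $H_1(\partial C_{p,q})$---so the numbers $a$ and $d$ are unchanged---but it can change the \emph{slope} of the individual boundary curves, so knowing $a=0\Leftrightarrow d=0$ for the essential surface $S'$ does not by itself tell you the slopes of $\partial S$.  Your suggested fix (show that meridional boundary forces the $\partial$-compressions to stay meridional, e.g.\ by producing punctured meridian disks of $V$) is workable; an alternative is to observe that after maximal $\partial$-compression the only non-essential pieces are $\partial$-parallel annuli, each of which lives on a single boundary torus and hence does not interfere with the meridional/non-meridional dichotomy on the \emph{other} torus, so that the biconditional for $S'$ transfers back to $S$.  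Either way, once that reduction is made precise your argument is complete; one can also bypass the issue entirely by appealing to the Gordon--Litherland classification of incompressible surfaces in cable spaces \cite{GL}, from which the lemma follows by inspection.
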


A {\em cable} of a knot is defined in the following way.  Given a knot $K_0$ in $S^3$, and a torus knot $T_{m,n}$, the cable $K$ we denote by ${\mathsf {cable}}(T_{m,n}, K_0)$ is the knot obtained by taking $S^3\setminus \eta(K_0)$ and gluing in the solid torus $V$ with $T_{m,n}$ pushed slightly into the interior of $V$.  The space $V$ is glued in so that the cable has the preferred framing, that is, the usual longitude of $V$ is mapped to the the trivial element in $H_1(E(K_0))$.  Then the knot $K$ is called the {\em $(m,n)$-cable} of $K_0$.






\subsection{Incompressible surfaces in 2-bridge knot complements}\label{sec:HT}
This subsection will mostly be devoted to Hatcher's and Thurston's result about essential surfaces, in the complement of 2-bridge knots. See \cite{HT} for more information. 

We will devote this paragraph to describing how these surfaces $S_n(n_1,n_2,\ldots,n_{k-1})$ are defined. Given a continued fraction decomposition $r+[b_1,b_2, \ldots, b_k]$ of $\frac{p}{q}$, form the corresponding 4-plat rational tangle in $S^3$, see Definition \ref{4platTangle}, and create a link by connecting the top two strands together and bottom two strands together. Since we are only investigating knots in this paper, we will assume that $q$ is odd in the reduced fraction $\frac{p}{q}$.  Isotope the knot into a vertical square tower, see Figure \ref{fig:2bridgeTower|}.  This knot has $k-1$ {\em inner horizontal plumbing squares}, one between each twisted region.  For each inner plumbing square, there is a complementary {\em outer plumbing square} which exists in the same horizontal plane in $S^3$, thinking of $S^3$ as $(S^2\times [0,1])/(S^2\times\{0\},S^2\times\{1\})$, see Figure \ref{fig:plumbSquare}.  The surface $S_n(n_1,n_2,\ldots,n_{k-1})$, where $n\geq 1$ and $0\leq n_i\leq n$, consists of $n$ parallel sheets running close to the vertical bands of this tower form of the knot.  At the $i$-th plumbing square, $n_i$ of the $n$ sheets run into the inner plumbing square and the other $n-n_i$ sheets run into the outer plumbing square.  See Figure \ref{fig:figure8Asurface} for a small example.

The branched surface $\Sigma [b_1,\ldots,b_k]$ is obtained by a single sheet running vertically and branching at each plumbing square into the inner and the outer plumbing square, see Figure \ref{fig:plumbSquare}. So for each continued fraction decomposition $r+[b_1,b_2, \ldots, b_k]$, the branched surface $\Sigma [b_1,\ldots,b_k]$ carries many not necessarily connected surfaces $S_n(n_1,n_2,\ldots,n_{k-1})$.  

Hatcher and Thurston, \cite{HT}, give the following classification of incompressible surfaces. 


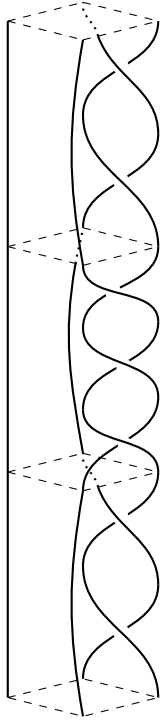
\begin{figure}
\begin{center}

\begin{tikzpicture}[yscale=1]


\draw [thick] (0,-3) to (0,6);
\draw [dashed] (0,6) to (1,6.25);
\draw [dashed] (0,6) to (1,5.75);
\draw [dashed] (1,5.75) to (2,6);
\draw [dashed] (1,6.25) to (2,6);

\draw [dashed] (0,3) to (1,3.25);
\draw [dashed] (0,3) to (1,2.75);
\draw [dashed] (1,2.75) to (2,3);
\draw [dashed] (1,3.25) to (2,3);

\draw [dashed] (0,0) to (1,.25);
\draw [dashed] (0,0) to (1,-.25);
\draw [dashed] (1,-.25) to (2,0);
\draw [dashed] (1,.25) to (2,0);

\draw [dashed] (0,-3) to (1,-2.75) to (2,-3) to (1,-3.25) to (0,-3);

\draw [thick] (.9,2.77) to  [out=260,in=100] (1,.25);
\draw [thick, dotted] (.9,2.77) to [out=80, in=260] (1,3.25);

\draw [thick, dotted] (1.2,5.8) to [out=100, in=260] (1,6.25);
\draw [thick, dotted] (1.2,-0.2) to [out=100, in=260] (1,.25);


\draw [thick] (1,5.75) to  [out=260,in=100] (1,2.75);

\draw [thick] (1,5.75-6) to  [out=260,in=100] (1,2.75-6);

\draw[thick] (2,6) to [out=270,in=90] (1,4.75);
\draw [fill=white, white] (1.5,5.4) circle [radius=0.1];

\draw[thick] (1.2,5.8) to [out=290,in=90] (2,4.5);



\draw[thick] (2,4.5) to [out=270, in=90] (1,3.25);
\draw [fill=white, white] (1.5,3.88) circle [radius=0.1];
\draw[thick] (1,4.75) to [out=270, in=90] (2,3);


\draw [thick] (2,3) to [out=270, in=90] (1,1.9166);
\draw [fill=white, white] (1.4,2.4) circle [radius=0.1];

\draw [thick] (1,2.75) to [out=270, in=90] (2,2);

\draw[thick] (2,2) to [out=270, in=90] (1,.8333);
\draw [fill=white, white] (1.55,1.4) circle [radius=0.1];

\draw [thick] (1,1.9166) to [out=270, in=90] (2,1);

\draw [thick] (2,1) to [out=270, in=90] (1,-.25);
\draw [fill=white, white] (1.57,.35) circle [radius=0.1];

\draw[thick] (1,.8333) to [out=270, in=90] (2,0);



\draw[thick] (2,6-6) to [out=270,in=90] (1,4.75-6);
\draw [fill=white, white] (1.5,5.4-6) circle [radius=0.1];

\draw[thick] (1.2,-0.2) to [out=290,in=90] (2,4.5-6);


\draw[thick] (2,4.5-6) to [out=270, in=90] (1,3.25-6);
\draw [fill=white, white] (1.5,3.88-6) circle [radius=0.1];
\draw[thick] (1,4.75-6) to [out=270, in=90] (2,3-6);


\end{tikzpicture}

\caption{A 2-bridge knot in square tower form.}\label{fig:2bridgeTower|}
\end{center}

\end{figure}

\begin{figure}
\begin{center}
\begin{tikzpicture}[yscale=1]





\draw [thick, rounded corners] (2.5,3.6) to (.4,2) to (7,2) to (9.6,4) to (7.85,4);
\draw [thick, rounded corners] (2.8,3.85) to (3,4) to (3.2,4);
\draw [thick] (3.4,4) to (3.8,4);
\draw [thick] (4.15,4) to (6.3,4);
\draw [thick] (6.5,4) to (6.9,4);
\draw [thick] (7.2,4) to (7.55,4);

\draw [thick] (6.3,3.45) to (4,3.45) to (2.7, 2.6) to (6.4,2.6) to (7.7,3.45) to (7.1,3.45);
\draw [thick] (6.5, 3.45) to (6.9, 3.45);

\draw [thick] (2.7,2.1) to (2.7,4.7) to (4,5.5) to (4,3.1);
\draw [thick] (2.7+3.7,2.1) to (2.7+3.7,4.7) to (4+3.7,5.5) to (4+3.7,3.1);

\draw [thick] (1.7, 3) to (2.6,3);

\draw [thick, rounded corners] (2.8, 3) to (3.3,3) to (3.3,5.07);
\draw [thick, rounded corners] (2.8+3.7, 3) to (3.3+3.7,3) to (3.3+3.7,5.07);

\draw [thick, rounded corners] (6.3, 3) to (3.3,3) to (3.3,5.07);
\draw [thick, rounded corners] (8.3, 3) to (3.3+3.7,3) to (3.3+3.7,5.07);

\draw [thick] (2.7,1.9) to (2.7,.6) to (6.4,.6) to (6.4,1.9);
\draw [thick] (4.7, .6) to (4.7,1.9);

\draw [thick, rounded corners] (4.7, 2.1) to (4.7,2.65) to (3.7,2);
\draw [thick, rounded corners] (4.7, 2.1) to (4.7,2.65) to (5.9,3.45) to (5.9,3.1);
\draw [thick, rounded corners] (5.9,3.1) to (5.9,3.45) to (6.3,3.75);
\draw [thick] (6.5, 3.88) to (6.67,4);

\draw [thick] (4, 2.9) to (4, 2.7);
\draw [thick] (4, 2.5) to (4, 2.3);
\draw [thick] (4, 1.9) to (4, 1.4) to (4.6, 1.4);
\draw [thick] (5.9, 1.9) to (5.9, 1.4) to (6.3, 1.4) to (4.8, 1.4);
\draw [thick] (5.9, 2.1) to (5.9, 2.5);
\draw [thick] (5.9, 2.7) to (5.9, 2.9);

\draw [thick] (6.5, 1.4) to (7.7, 1.4) to (7.7,2.4);
\draw [thick] (7.7, 2.6) to (7.7, 2.9);

\end{tikzpicture}

\caption{A plumbing square, with $n$ vertical sheets, $n_i$ sheets in the inner plumbing square, and $n-n_i$ sheets in the outer square.}\label{fig:plumbSquare}
\end{center}
\end{figure}
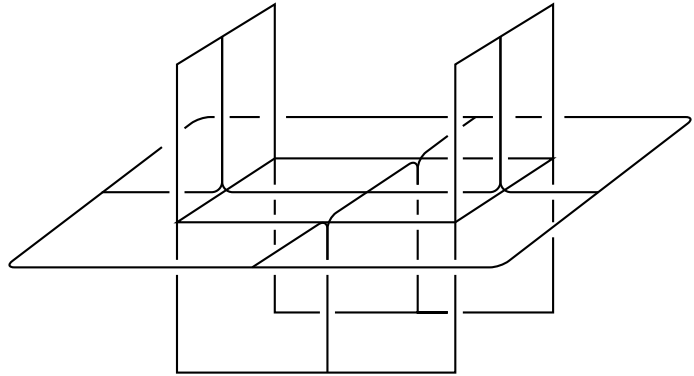

\begin{figure}[htbp]
\begin{center}
\includegraphics[height=2.2in]{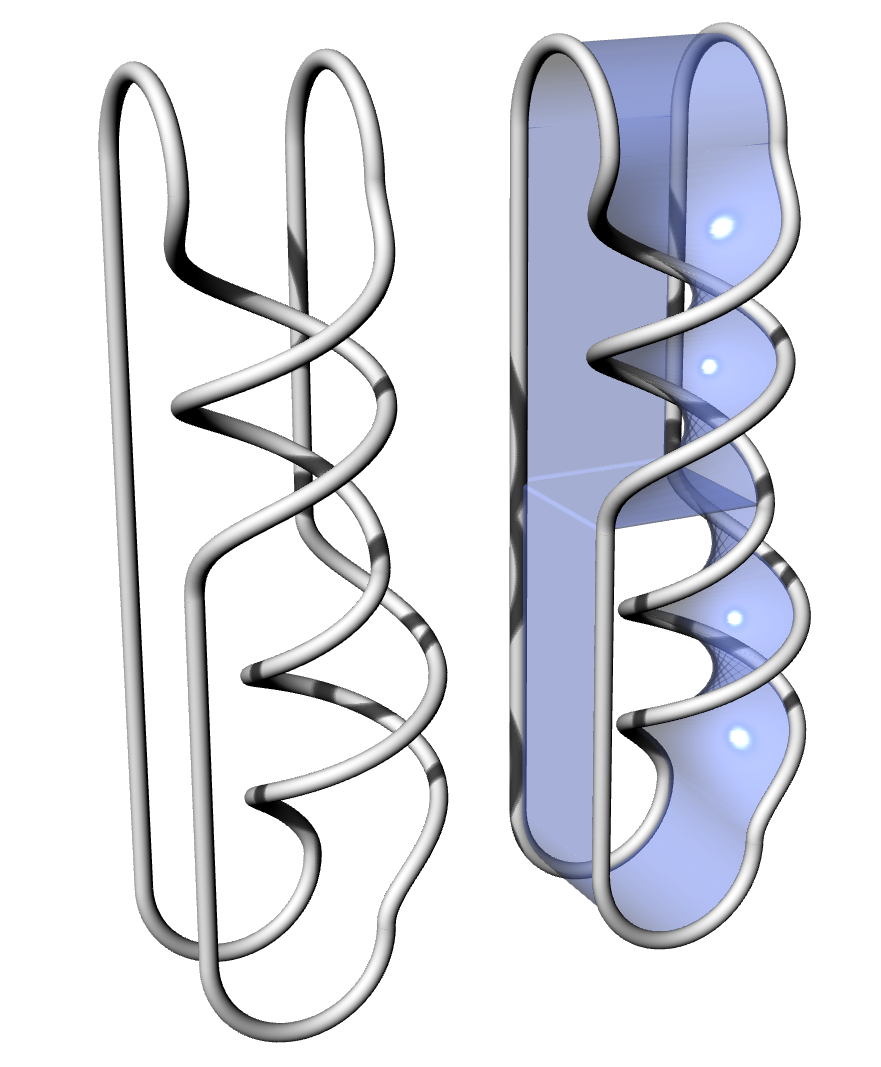}

\end{center}\caption{On the left, the figure 8 knot, and on the right the same knot with the surface $S_1(1)$.}\label{fig:figure8Asurface}
\end{figure}

\begin{theorem}\label{thm:HTthm}{{\cite[Theorem 1]{HT}}}
Let $\frac{p}{q}$ be a rational number with continued fraction decomposition $r+[b_1,b_2, \ldots, b_k]$. 
\begin{enumerate}
\item A closed incompressible surface in $S^3-K_{p/q}$ is a torus isotopic to the boundary of a tubular neighborhood of $K_{p/q}$.

\item A non-closed incompressible, $\partial$-incompressible surface in $S^3-K_{p/q}$ is isotopic to one of the surfaces $S_n(n_1,\ldots,n_{k-1})$ carried by $\Sigma [b_1,\ldots,b_k]$, for some continued fraction expansion $p/q=r+[b_1,\ldots,b_k]$ with $\vert b_i \vert \geq 2$ for each $i$.

\item The surface $S_n(n_1,\ldots,n_{k-1})$ carried by $\Sigma [b_1,\ldots,b_k]$ is incompressible and $\partial$-incompressible if and only if $\vert b_i \vert \geq 2$ for each $i$.

\item Surfaces $S_n(n_1,\ldots,n_{k-1})$ carried by distinct $\Sigma [b_1,\ldots,b_k]$'s with $\vert b_i \vert \geq 2$ for each $i$ are not isotopic.

\item The relation of isotopy among the surfaces $S_n(n_1,\ldots,n_{k-1})$ carried by a given $\Sigma [b_1,\ldots,b_k]$ with $\vert b_i \vert \geq 2$ for each $i$ is generated by: 

$S_n(n_1,\ldots,n_{i-1}, n_i ,\ldots, n_{k-1})$  is isotopic to \\
$S_n(n_1,\ldots,n_{i-1}+1, n_i+1 ,\ldots, n_{k-1})$ if $b_i = \pm 2$. (When $i=1$ this means $S_n(n_1,n_2,\ldots,n_{k-1})$ is isotopic to 
$S_n(n_1+1,n_2\ldots,n_{k-1})$, and similarly when $i=k$.)

\end{enumerate}
\end{theorem}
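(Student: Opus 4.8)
The plan is to analyze an incompressible, $\partial$-incompressible surface $S$ relative to the square-tower decomposition of $S^3$ described above. Writing $S^3 = (S^2 \times [0,1])/\!\sim$, I would cut along a family of level $2$-spheres $S^2 \times \{t_j\}$ chosen so that between consecutive levels the pair $(S^3, K_{p/q})$ restricts to a single elementary piece: either one twist region of the $i$-th band or one plumbing region containing a complementary inner/outer square pair. After an isotopy of $S$ minimizing $|S \cap (\bigcup_j S^2\times\{t_j\})|$ together with $|S \cap K_{p/q}|$, standard innermost-disk and outermost-arc arguments, using the hypotheses that $S$ is incompressible and $\partial$-incompressible, remove all closed curves of intersection and put the remaining arcs into a standard position on each level sphere.

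Next I would classify, piece by piece, how $S$ can meet each elementary region. In a twist region the surface is forced to run as a collection of parallel vertical sheets, and this is precisely where the condition $|b_i|\ge 2$ enters: a region contributing a single crossing admits a compression or $\partial$-compression, so any surface surviving the normalization must have $|b_i|\ge 2$. At a plumbing square each sheet either continues into the inner square or peels off into the complementary outer square; recording, at the $i$-th square, how many of the $n$ sheets go inner versus outer produces exactly the integer data $(n;n_1,\ldots,n_{k-1})$. Matching the local pictures across adjacent level spheres shows that $S$ is carried by the branched surface $\Sigma[b_1,\ldots,b_k]$, hence isotopic to some $S_n(n_1,\ldots,n_{k-1})$; this is part (2). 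The same normalization applied to a closed surface leaves no boundary on $\partial N(K_{p/q})$, and the only closed surface carried by the tower is the single vertical torus parallel to $\partial N(K_{p/q})$, which gives part (1).

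For part (3) I would argue by direct inspection of the carried surfaces: when some $|b_i|=1$ I exhibit an explicit compressing or $\partial$-compressing disk built from the degenerate single-crossing region, and conversely when all $|b_i|\ge 2$ I verify incompressibility and $\partial$-incompressibility by a minimal-intersection argument against the meridian disks of the tower, checking that no compression can be introduced. Parts (4) and (5) are then essentially combinatorial. To distinguish surfaces carried by different branched surfaces $\Sigma[b_1,\ldots,b_k]$ I compute isotopy invariants read off from the data, namely the boundary slope together with the sheet number $n$ and the Euler characteristic determined by $(n;n_1,\ldots,n_{k-1})$ and the $b_i$; to identify the isotopies among surfaces carried by a fixed branched surface I trace the effect of sliding a sheet across a $b_i=\pm 2$ twist region and show this elementary move generates all isotopies.

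The hardest step is the normalization and local classification of the second paragraph: proving that after minimizing intersections the surface really is forced into the vertical-sheet-plus-branching form, with no exotic saddles linking distant levels, and that the local inner/outer counts glue consistently into well-defined global data $(n;n_1,\ldots,n_{k-1})$. The bookkeeping across the plumbing squares, and the verification that the only way to avoid a compression in a single-crossing region is to have $|b_i|\ge 2$, is where the real work concentrates; once the normal form is established, the distinguishing invariants of parts (4)–(5) follow comparatively routinely.
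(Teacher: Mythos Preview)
The paper does not prove this theorem. It is quoted from Hatcher and Thurston \cite{HT} as an external result and used as a black box (see Remark~\ref{rmk:HT} and the applications in Sections~\ref{sec:Alex} and~\ref{sec:proof}). There is therefore no proof in the present paper to compare your proposal against.

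For what it is worth, your outline is broadly in the spirit of Hatcher and Thurston's actual argument: they too sweep the surface through a one-parameter family of level $2$-spheres coming from the product structure $S^2\times I$ and normalize the intersections. However, their mechanism is a Morse-theoretic minimax argument on the complexity of the level curves rather than a static cut-and-classify along finitely many levels, and the output is phrased in the language of branched surfaces from the start. Your plan for parts~(4) and~(5) is weaker than what they do: boundary slope, sheet number $n$, and Euler characteristic do not by themselves separate all the surfaces in question (distinct $(n_1,\ldots,n_{k-1})$ can share these invariants), so one genuinely needs the carrying-by-a-branched-surface structure, together with a uniqueness statement for that carrying, to finish. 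If you intend to write this up independently of \cite{HT}, that is where the real gap in your sketch lies.
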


\begin{remark}\label{rmk:HT}
The main points of this theorem that we will use are $(1)$ and $(2)$, which gives us that the only closed incompressible surfaces in the complement of a 2-bridge knot $K_{p/q}$ are isotopic to the boundary of $E(K_{p/q})$ and that every incompressible surface with boundary is isotopic to some $S_n(n_1,\ldots,n_k)$.
\end{remark}

\subsection{Tunnel Number}\label{sec:tunnel}

Recall $E(K)$ is the exterior of a knot. A family of mutually disjoint properly embedded arcs $\Gamma$ in $E(K)$, the exterior of the knot $K$, is said to be an {\em unknotting tunnel system} if $E(K)-\eta(\Gamma)$ is homeomorphic to a handlebody. 

\begin{definition}\label{def:tunnel}
The {\em tunnel number} of a knot $K$, $t(K)$, is the minimum number of arcs in an unknotting tunnel system, over all unknotting tunnel systems for $K$. 

\end{definition}
 
Morimoto gives an equivalent definition in \cite{Morimoto}, which we will use here. For a knot $K$ in $S^3$, there is a Heegaard splitting $(V_1, V_2)$ of $S^3$ such that a handle of $V_1$ contains $K$ as a core of $V_1$.

\begin{definition}\label{def:Mtunnel}

The minimum genus of $V_1$ minus one, over all Heegaard splittings satisfying the above fact, is the tunnel number, $t(K)$. 

\end{definition}


\subsection{Distance}\label{sec:dist}

For a more thorough discussion on distance, see \cite{Tomova}. Given a compact, orientable, properly embedded surface $S$ in a 3-manifold $M$, the 1-skeleton of the {\em curve complex}, $\mathcal{C}(S)$, is the graph whose vertices correspond to isotopy classes of essential simple closed curves in $S$ such that two vertices are connected if the corresponding isotopy classes have disjoint representatives. For two subsets $A$ and $B$ of $\mathcal{C}(S)$, the {\em distance} between them, $d(A,B)$ is defined to be the length of the  shortest path from an element of $A$ to an element of  $B$.  

For any subset $X\subset S^3$, let $X_K$ be $E(K)\cap X$ .

\begin{definition}\label{def:dist}{{\cite{Tomova}}} Suppose $M$ is a closed, orientable irreducible 3-manifold containing a knot $K$ and suppose $P_K$ is a bridge surface for $K$ splitting $M$ into handlebodies $V$ and $W$. Let $\mathcal{V}$ (resp $\mathcal{W}$) be the set of all essential simple closed curves on $P_K$ that bound disks in $V_K$ (resp. $W_K$). Then the {\em distance} $d(P, K): = d(\mathcal{V}, \mathcal{W})$ measured in $C(P_K )$.

\end{definition}

We use this definition in Section \ref{sec:BSMontandPret}  as noted in the introduction to show that the converse of Theorem \ref{thm:dist} does not hold.  A note on intuition, one usually thinks of distance as an invariant which describes how interconnected the bridge disks are to each other.  A knot that has a high distance genus zero bridge surface would imply that the bridge disks are all ``overlapping" and thus meridional stabilization is the best we can do, and thus, the bridge spectrum is stair-step.


\subsection{Generalized Montesinos Knots}\label{subsec:GMK}

J. Montesinos defined the class of knots and links that now bear his name in 1973 in \cite{Montesinos}.  As stated earlier on page \pageref{2-bridge knot}, given a rational number $\frac{\beta}{\alpha} \in \Q$, there is a unique continued fraction decomposition $\frac{\beta}{\alpha}=[a_1,a_2, \ldots, a_m] $ where $a_i \not = 0$ for all $i=1,\ldots,m$ and $m$ is odd. We also recall that for each rational number, there is an associated rational tangle, see page \pageref{2bTangle} and Figure \ref{fig:rationaltangle}.

\begin{definition} 
A {\em Montesinos knot or link} $M(\frac{\beta_1}{\alpha_1},\frac{\beta_2}{\alpha_2},\ldots,\frac{\beta_n}{\alpha_n}\vert e)$ is the knot in Figure \ref{fig:mont} where each $\beta_i,\alpha_i$ for $i=1,\ldots, n$ represents a rational tangle given by $\frac{\beta_i}{\alpha_i}$, and $e$ represents the number of positive half-twists. If $e$ is negative, we have negative half-twists instead. 

\end{definition}

\begin{figure}
\begin{center}

\begin{tikzpicture}[xscale = .7, yscale = .7]




\draw (9.5,3) rectangle +(2,2);

\draw (9.5,9) rectangle +(2,2);
\draw (9.5,12) rectangle +(2,2);

\draw [domain=0:.4] plot ({.5*cos(pi* \x r )+1.5}, \x+6);

\draw [domain=0:1.4] plot ({- .5*cos(pi* \x r )+1.5}, \x+6);

\draw [domain=.6:2.4] plot ({.5*cos(pi* \x r )+1.5}, \x+6);

\draw [domain=.6:2.4] plot ({.5*cos(pi* \x r )+1.5}, \x+7);

\draw [domain=.6:2] plot ({.5*cos(pi* \x r )+1.5}, \x+8);
\draw [domain=.6:1] plot ({.5*cos(pi* \x r )+1.5}, \x+9);

\draw [rounded corners] (1,6) -- (1,1) -- (11,1)--(11,3);
\draw [rounded corners] (2,6) -- (2,2) -- (10,2)--(10,3);

\draw [rounded corners] (1,10) -- (1,16) -- (11,16)--(11,14);
\draw [rounded corners] (2,10) -- (2,15) -- (10,15)--(10,14);

\foreach \a in {5,8,11}{
	\draw (10,\a) -- (10,\a+1);
	\draw (11,\a) -- (11,\a+1);
	}

\draw [fill] (10.5,6.7) circle [radius=0.05];	
\draw [fill] (10.5,7) circle [radius=0.05];	
\draw [fill] (10.5,7.3) circle [radius=0.05];	

\node at (0.5,8) {$e$};

\node at (10.5, 13) {$\beta_1,\alpha_1$};
\node at (10.5, 10) {$\beta_2,\alpha_2$};
\node at (10.5, 4) {$\beta_n,\alpha_n$};

\end{tikzpicture}
\caption{The Montesinos link $M(\frac{\beta_1}{\alpha_1},\frac{\beta_2}{\alpha_2},\ldots,\frac{\beta_n}{\alpha_n}\vert e)$}\label{fig:mont}
\end{center}

\end{figure}
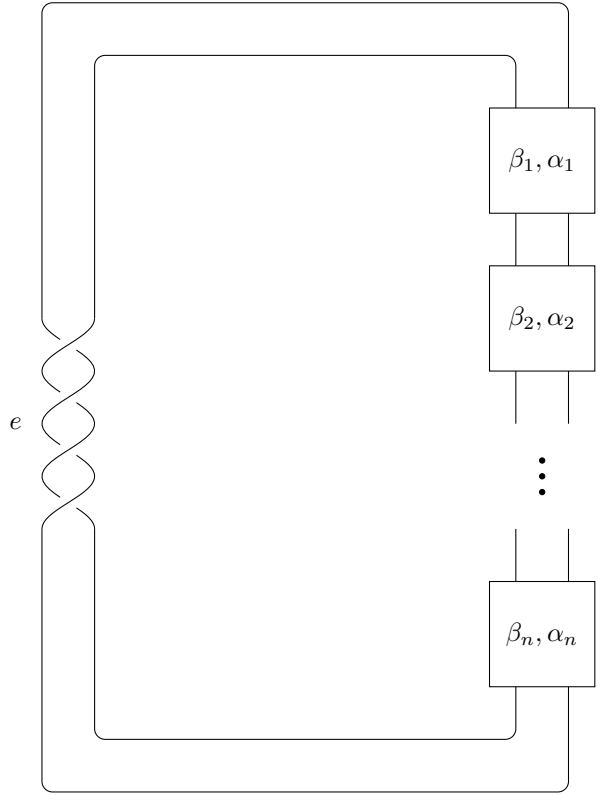

Lustig and Moriah in \cite{LM} defined the class of knots which  we describe in the rest of this subsection.  They based their definition off of Boileau and Zieschang \cite{BZ}, who prove that any Montesinos knot $M(\frac{\beta_1}{\alpha_1},\frac{\beta_2}{\alpha_2},\ldots,\frac{\beta_n}{\alpha_n}\vert e)$, which does not have integer tangles, i.e., $\alpha_i \not = 1$ for all $i$, has bridge number $n$. Consider Figure \ref{fig:gmk}. Each $\alpha_{i,j},\beta_{i,j}$ is the 4-plat diagram from the rational tangle defined by $\alpha_{i,j}/\beta_{i,j}$. An $n$-braid is $n$ disjoint arcs in $I^3$ with initial points in $I\times\{\frac{1}{2}\} \times \{1\}$, end points on $I\times\{\frac{1}{2}\} \times \{0\}$, and the arcs strictly decreasing in the third component of $I^3$. A double of a braid is obtained by duplicating each arc in an $\epsilon$-neighborhood of the original, possibly with twisting of an arc and its duplicate, so it becomes a $2n$-braid.  In a generalized Montesinos knot, each $B_j$ is a double of an $n$-braid. For a more in depth description, see \cite{BZ} and \cite{LM}. They exhibit a number diagrams which show that every Montesinos Knot and in particular, every pretzel knot, is a Generalized Montesinos Knot. One should note that for a pretzel knot $K_n=K_n(p_1,\ldots, p_n)$, the corresponding rational tangles are $p_k=\alpha_{i,j}/\beta_{i,j}$, which are integers. 

\begin{definition}\label{def:genMont}
A {\em generalized Montesinos knot or link} \newline $$K=M\left( \{(\beta_{i,j},\alpha_{i,j})\}_{i=1,j=1}^{i=\ell,j=m}, \{B_i\}_{j=1}^{j=\ell -1} \right)$$ is the knot in Figure \ref{fig:gmk}, where each $\beta_{i,j},\alpha_{i,j}$ for $i=1,\ldots, m$ and $j=1,\ldots, \ell$ represents a rational tangle given by $\frac{\beta_{i,j}}{\alpha_{i,j}}$ and $B_i$ represents a $2n$-braid, which is obtained by doubling an $n$-braid.

\end{definition}

The main result from Lustig and Moriah's paper that we use below is the following.  Recall that $rk(G)$, the rank of the group $G$, is the minimum number of generators; the notation $t(K)$ is the tunnel number, see Definition \ref{def:tunnel}; and $b(K)$ is the bridge number, which we are usually denoted as $b_0(K)$, the genus zero bridge number, see Definition \ref{def:genus $g$ bridge number}.

\begin{theorem} \label{LMt}{{\cite[Theorem 0.1]{LM} }}
Let $K$ be a generalized Montesinos knot/link as in Figure \ref{fig:gmk} below, with $2n$-plats. Let  $\alpha=\text{gcd}(\alpha_{i,j}:i=1,\ldots,\ell; j=1,\ldots,m)$.  If $\alpha \not = 1$ then rk$(\pi_1(S^3-K))=t(K)+1=b(K)=n$.
\end{theorem}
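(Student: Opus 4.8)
The plan is to prove that the four quantities coincide by sandwiching them between $n$ and $n$: two of the three equalities are instances of completely general inequalities valid for every knot, so the real content is an upper bound on the bridge number together with a matching lower bound on the rank of the knot group. Concretely, I would aim to establish
$$ n \le \mathrm{rk}(\pi_1(S^3-K)) \le t(K)+1 \le b(K) \le n, $$
after which all inequalities are forced to be equalities and the theorem follows at once.

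First I would record the two middle inequalities, which hold for any knot. The inequality $t(K)+1 \le b(K)$ is exactly the $g=0$ case of the bound $t(K) \le g+b-1$ of Morimoto--Sakuma--Yokota quoted above: a $(0,b_0(K))$-splitting gives $t(K) \le b_0(K)-1 = b(K)-1$. The inequality $\mathrm{rk}(\pi_1(S^3-K)) \le t(K)+1$ is standard and follows from Morimoto's description of the tunnel number (Definition~\ref{def:Mtunnel}): the exterior $E(K)$ admits a Heegaard splitting whose handlebody side has genus $t(K)+1$, and since the fundamental group of a genus-$g$ handlebody is free of rank $g$ and surjects onto $\pi_1(E(K))$, the group is generated by $t(K)+1$ elements. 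Next, the upper bound $b(K) \le n$ I would simply read off the diagram: the $2n$-plat presentation of $K$ in Figure~\ref{fig:gmk} is the plat closure of a braid on $2n$ strands, hence exhibits $K$ with exactly $n$ maxima, i.e. a genus-zero $n$-bridge splitting, so $b(K) = b_0(K) \le n$.

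It then remains to prove the lower bound $\mathrm{rk}(\pi_1(S^3-K)) \ge n$, and this is both the crux of the argument and the only step that uses the hypothesis $\alpha = \gcd(\alpha_{i,j}) \ne 1$. The strategy I would follow is to fix a prime $p \mid \alpha$ and construct an explicit quotient of $\pi_1(S^3-K)$ — equivalently, a suitable linear representation — that provably requires $n$ generators; since the rank of any quotient bounds $\mathrm{rk}(\pi_1)$ from below, this finishes the proof. The natural candidate is a metacyclic or metabelian quotient built from the $p$-fold cyclic structure: because $p$ divides every $\alpha_{i,j}$, each rational tangle $\beta_{i,j}/\alpha_{i,j}$ degenerates modulo $p$ in a controlled, symmetric way, and one expects the $n$ meridians attached to the $n$ columns of the plat to stay independent in the quotient.

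The main obstacle is precisely to show that this independence is \emph{robust}: one must argue not merely that $n$ generators suffice, but that no sequence of Nielsen moves and Tietze transformations can reduce the presentation to fewer than $n$ generators. This is genuinely delicate, because rank is not read off naively from a presentation. Lustig and Moriah's technique addresses exactly this point by pairing the representation with a counting invariant that is insensitive to Nielsen equivalence, and the heart of the argument is to verify that this invariant is nonzero precisely when $\gcd(\alpha_{i,j}) \ne 1$. This is also what makes the hypothesis essential: when the $\alpha_{i,j}$ share no common factor the tangles can interact, the obstruction vanishes, and indeed the bridge number can drop below $n$, so the equalities genuinely fail in general.
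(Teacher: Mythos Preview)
This theorem is not proved in the paper at all: it is quoted verbatim as Theorem~0.1 of Lustig--Moriah \cite{LM} and used as a black box (see the proof of Theorem~\ref{thm:bsgmk}, which simply invokes it together with Proposition~\ref{prop:stairstep}). So there is no ``paper's own proof'' to compare against.

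That said, your outline is an accurate high-level description of how the Lustig--Moriah argument is structured. The chain of inequalities $n \le \mathrm{rk}(\pi_1(E(K))) \le t(K)+1 \le b(K) \le n$ is exactly the right framework, and your identification of the three ``easy'' inequalities is correct and well-justified from material already in the present paper. You are also right that the entire content lies in the lower bound $\mathrm{rk}(\pi_1) \ge n$, that this is the only place the hypothesis $\gcd(\alpha_{i,j}) \neq 1$ enters, and that the obstacle is the Nielsen-equivalence problem: one needs an invariant of generating systems, not merely a presentation. Lustig and Moriah's tool for this is what they call $N$-torsion (hence the title of \cite{LM}), a Reidemeister--Fox--torsion-type invariant computed from a representation into a suitable matrix group over a group ring; the nonvanishing of this invariant when a prime divides all $\alpha_{i,j}$ is the technical heart of their paper. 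Your final two paragraphs correctly anticipate both the mechanism and the reason the hypothesis is essential, though of course the actual computation is substantial and not something one could reconstruct from this sketch alone.
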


\newcount \mycount

\begin{figure}
 \begin{center}

 \begin{tikzpicture}[transform shape]
 
 

\draw (0,3.5) rectangle +(13,2);
\draw (0,7.5) rectangle +(13,2);
\draw (0,12.5) rectangle +(13,2);


\node at (1,2) {$\alpha_{\ell,1},\beta_{\ell,1}$};
\node at (4,2) {$\alpha_{\ell,2},\beta_{\ell,2}$};
\node at (7,2) {$\alpha_{\ell,3},\beta_{\ell,3}$};
\node at (12,2) {$\alpha_{\ell,m},\beta_{\ell,m}$};

\node at (7,4.5) {$B_{\ell-1}$};
\node at (7,8.5) {$B_{2}$};
\node at (7,13.5) {$B_{1}$};

\foreach \d [count = \di] in {16,11}{
	\node at (12,\d) {$\alpha_{\di,m},\beta_{\di,m}$};
	
	\foreach \e [count = \ei] in {0,3,6}{
		\node at (\e+1,\d) {$\alpha_{\di,\ei},\beta_{\di,\ei}$};

		}
	}

\foreach \a [count = \ai] in {0,3,6,11}{
	\draw (\a,1) rectangle +(2,2);
	\draw (\a,10) rectangle +(2,2);
	\draw (\a,15) rectangle +(2,2);

	\draw [domain=180:360] plot ({\a+.35+.15*cos(\x)}, {.5+.15*sin(\x)});	
	\draw [domain=180:360] plot ({\a+1.65+.15*cos(\x)}, {.5+.15*sin(\x)});

	\draw [domain=0:180] plot ({\a+.35+.15*cos(\x)}, {17.5+.15*sin(\x)});	
	\draw [domain=0:180] plot ({\a+1.65+.15*cos(\x)}, {17.5+.15*sin(\x)});

	\foreach \b in {.5,3,5.5,7,9.5,12,14.5,17}{
		\draw (\a+.2,\b) -- (\a+.2,\b+.5);
		\draw (\a+.5,\b) -- (\a+.5,\b+.5);

		\draw (\a+1.8,\b) -- (\a+1.8,\b+.5);
		\draw (\a+1.5,\b) -- (\a+1.5,\b+.5);
	}
	
	}
\foreach \c in {2,11,16}{
	
	\draw [fill] (9,\c) circle [radius=0.05];	
	\draw [fill] (9.5,\c) circle [radius=0.05];	
	\draw [fill] (10,\c) circle [radius=0.05];	
	}
	
\draw [fill] (7,6.2) circle [radius=0.05];	
\draw [fill] (7,6.5) circle [radius=0.05];	
\draw [fill] (7,6.8) circle [radius=0.05];


\end{tikzpicture}

\caption{A generalized Monetesinos knot}\label{fig:gmk}
\end{center}
\end{figure}
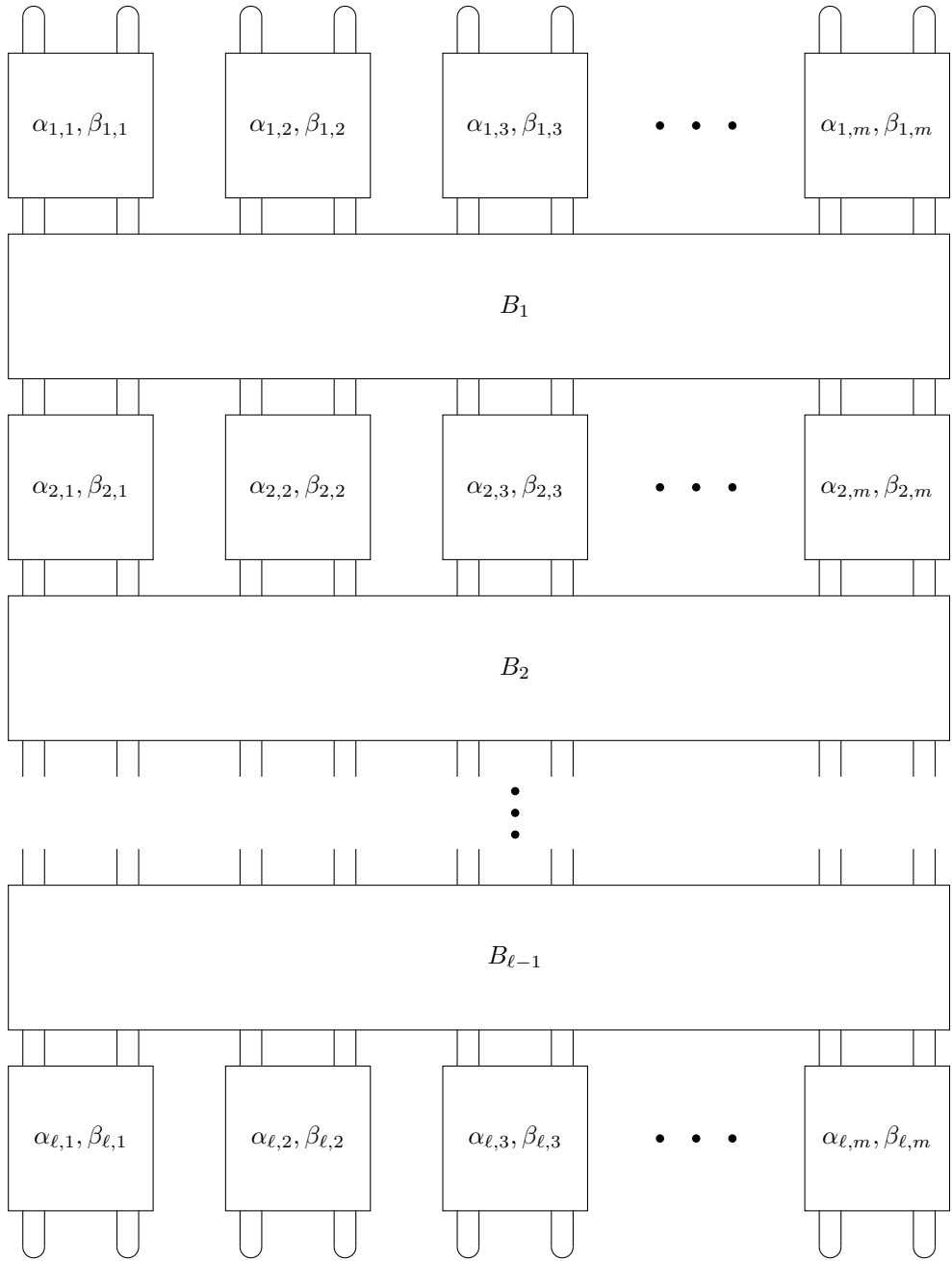


\subsection{Results on bridge surfaces} 
In \cite{Z1}, Zupan introduced bridge spectrum and in the same paper produced the results which appear in this subsection. 

Let $M$ be a 3-manifold with boundary, let $P$ be a subsurface of $\p M$, and let $A$ be a properly embedded surface in $M$. Then a {\em $P$-$\p$-compressing disk} for $A$ is a $\p$-compressing disk $\Delta$ for $A$ with the added condition that $\Delta \cap \p M \subset P$.  Also, $A$ is {\em $P$-essential} if $A$ is incompressible and there does not exist a $P$-$\p$-compressing disk for $A$ in $M$. Similarly, $A$ is {\em $P$-strongly irreducible} if $A$ is separating and admits either compressing or $P$-$\p$-compressing disks on either side but admits no pair of disjoint disks on opposite sides.  Two surfaces $A$ and $B$ are {\em almost transverse} if $A$ is transverse to $B$ except for a single saddle tangency. The following lemmas of Zupan will be used in sections 3 to 5 below. For the following lemma, recall for a 3-manifold $M$ and an embedded 1-manifold $J\subset M$, $M(J)$ denotes $M-\eta(J)$ and for an embedded surface $\Sigma\subset M$, $\Sigma_J$ denotes $\Sigma - \eta(J)$.

\begin{lemma} \label{Z5.2}{{\cite[Lemma 5.2]{Z1}}} Let $M$ be a compact 3-manifold and $J$ a properly embedded 1-manifold, with $Q:=\partial N(J)$ in $M(J)$.  Suppose $\Sigma$ is a strongly irreducible bridge splitting surface for $(M,J)$, and let $S\subset M(J)$ be a collection of properly embedded essential surfaces such that for each component $c$ of the boundary of each element of $S$, either $c\subset Q$ or $c\subset \partial M$.  Then one of the following must hold:

\begin{enumerate}
\item After isotopy, $\Sigma_J$ is transverse to each element of $S$ and each component of $\Sigma_J \setminus \eta(S)$ is $Q$-essential in $M(J) \setminus \eta(S)$.
\item After isotopy, $\Sigma_J$ is transverse to $S$, one component of $\Sigma_J \setminus \eta(S)$ is $Q$-strongly irreducible and all other components are $Q$-essential in $M(J) \setminus \eta(S)$.
\item After isotopy, $\Sigma_J$ is almost transverse to $S$ and each component of $\Sigma_J \setminus \eta(S)$ is $Q$-essential in $M(J) \setminus \eta(S)$.
\end{enumerate}

\end{lemma}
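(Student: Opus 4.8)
The plan is to run the standard sweepout-and-labeling argument for a strongly irreducible surface meeting a fixed essential surface, adapted to the knot exterior $M(J)$. Write the bridge splitting as $(M,J)=(V,\tau_V)\cup_\Sigma(W,\tau_W)$, so that after deleting $\eta(J)$ the surface $\Sigma_J$ separates $M(J)$ into two compression bodies $V_J$ and $W_J$; here strong irreducibility means that $\Sigma_J$ admits a compressing or $Q$-$\partial$-compressing disk on each side but no pair of disjoint such disks on opposite sides. The aim is first to normalize $\Sigma_J\cap S$, then to sweep $\Sigma_J$ across $M(J)$ and use strong irreducibility to localize all compressibility either to nothing (alternative (1)), to a single $Q$-strongly irreducible piece (alternative (2)), or to a single saddle tangency (alternative (3)).

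First I would put the intersection $\Sigma_J\cap S$ in essential position. Isotope $\Sigma_J$ transverse to $S$ minimizing $|\Sigma_J\cap S|$. Because each component of $S$ is essential, any intersection curve bounding a disk in $\Sigma_J$ or compressible in $S$ is removed by an innermost-disk isotopy, and any $\partial$-parallel arc of intersection is pushed off using $\partial$-incompressibility of $S$ together with the hypothesis $\partial S\subset Q\cup\partial M$ (so outermost arcs of $\Sigma_J\cap S$ on $S$ give $Q$- or $\partial M$-$\partial$-compressions that can be absorbed). After this cleanup every component of $\Sigma_J\cap S$ is essential in both $\Sigma_J$ and $S$, and cutting along $S$ decomposes $\Sigma_J$ into the pieces $\Sigma_J\setminus\eta(S)$ sitting inside $M(J)\setminus\eta(S)$.

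The core dichotomy comes from the sweepout. Take a sweepout $\{\Sigma_t\}_{t\in[0,1]}$ of $M(J)$ by surfaces isotopic to $\Sigma_J$, interpolating between spines of $V_J$ and $W_J$ and kept in essential position against the fixed $S$. Label a generic level $t$ with $V$ if some piece of $\Sigma_t\setminus\eta(S)$ admits a $V$-side compressing or $Q$-$\partial$-compressing disk in $M(J)\setminus\eta(S)$, and with $W$ symmetrically. Strong irreducibility forbids any single level from being labeled both $V$ and $W$ with \emph{disjoint} disks, and disks living on two \emph{different} pieces are automatically disjoint; hence a both-labeled level must carry its $V$- and $W$-disks on one and the same piece, which is then $Q$-strongly irreducible. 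If some level can be made unlabeled, every piece is $Q$-essential and we are in alternative (1). Otherwise the label changes along $[0,1]$, and at the changeover either a single level is genuinely both-labeled, giving the $Q$-strongly irreducible piece of alternative (2) with all other pieces $Q$-essential, or the label transition is forced to occur exactly where $\Sigma_t$ is tangent to $S$, producing the almost-transverse position with a single saddle of alternative (3), at which every piece is $Q$-essential.

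The hard part will be this last step: analyzing the labeled one-parameter family in the spirit of the Rubinstein–Scharlemann graphic and of Bachman–Schleimer, to extract \emph{exactly one} $Q$-strongly irreducible piece or \emph{exactly one} saddle, and to rule out more complicated transitions. The presence of the knot $J$ adds bookkeeping, since disks are allowed to be $Q$-$\partial$-compressions along $Q=\partial N(J)$ and the essentiality conditions must be taken relative to $Q$ throughout; the arc analysis in the normalization step and the verification that the tangency can always be taken to be a single saddle are where the real care is required. Everything else reduces to routine innermost-disk manipulation and the strong-irreducibility disjointness obstruction.
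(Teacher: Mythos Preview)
The paper does not supply its own proof of this lemma; it is quoted verbatim from Zupan \cite[Lemma~5.2]{Z1} and used as a black box in the proof of Theorem~\ref{thm:BSofC2b}. So there is nothing in the present paper to compare your argument against.

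That said, your outline is the right one and is essentially the argument Zupan gives in \cite{Z1}: a sweepout of $M(J)$ by parallel copies of $\Sigma_J$, labeling of generic levels by which side admits a compressing or $Q$-$\partial$-compressing disk for some piece of $\Sigma_t\setminus\eta(S)$, and the Rubinstein--Scharlemann/Bachman--Schleimer dichotomy at the transition. Your identification of the delicate point is accurate: the work is in showing that at the changeover one gets either a single both-labeled level (hence one $Q$-strongly irreducible piece) or a single saddle tangency, and not something worse. One small caution on your normalization step: you cannot simply minimize $|\Sigma_J\cap S|$ at the outset and then run the sweepout, since the sweepout moves $\Sigma_J$ through a family and the intersection pattern changes; the essential-position cleanup has to be maintained throughout the family (this is what the graphic is for), and the argument that inessential curves and arcs can be removed at each generic level uses incompressibility and $\partial$-incompressibility of $S$ together with irreducibility of $M(J)$. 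This is routine but should be stated carefully in a full write-up.
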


\begin{lemma} \label{lem:Zlem6.1}{{\cite[Lemma 6.1]{Z1}}} Let $J$ be a knot in a 3-manifold $M$ and let $K={\mathsf {cable}}(T_{m,n}, J)$ be a $(m,n)$-cable of $J$.  If $\Sigma \subset M$ is a Heegaard surface such that $J\subset \Sigma$ and if there is a compressing disk $D$ for $\Sigma$ such that $\vert D\cap J \vert =1$, then there exists an embedding of $K$ in $M$ such that $K\subset \Sigma$. 

\end{lemma}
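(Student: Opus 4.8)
The plan is to promote the single compressing disk into a solid-torus neighborhood of $J$ whose boundary meets $\Sigma$ in a once-punctured torus, and then to slide the cable onto that once-punctured torus. First I would make the elementary observation that, since $J\subset\Sigma$ and $\Sigma$ bounds the handlebody $V$ containing $D$ on one side, the point of $D\cap J$ lies on $\partial D$; thus $\delta:=\partial D$ is an essential simple closed curve on $\Sigma$ meeting $J$ transversely in exactly one point, and $\mathrm{int}(D)$ lies in $\mathrm{int}(V)$. Set $U:=N(J\cup D)$, a regular neighborhood taken in $V$. Because $D$ is a disk meeting $J$ only at the point $\delta\cap J$, the complex $J\cup D$ collapses to $J$, so $U$ is obtained from the solid torus $N(J)$ by attaching the ball $N(D)$ along a disk in its boundary; hence $U$ is again a solid torus and a tubular neighborhood of $J$, with $J$ as core and $D$ a meridian disk. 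In particular $\delta=\partial D$ is a meridian of $\partial U$, and $\partial U$ is isotopic in $E(J)$ to $\partial N(J)$ with the meridian and the preferred longitude matching.

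Next I would analyze how $\partial U$ meets $\Sigma$. Since $J\cup\delta\subset\Sigma$ and only $\mathrm{int}(D)$ is pushed into $V$, a thin enough $U$ satisfies $U\cap\Sigma=F$, where $F$ is a regular neighborhood of $J\cup\delta$ in $\Sigma$; as $|J\cap\delta|=1$, the surface $F$ is a once-punctured torus. Writing $\partial U=F\cup F^{*}$ with $F^{*}=\overline{\partial U\setminus\Sigma}\subset\mathrm{int}(V)$, an Euler characteristic count gives $\chi(F^{*})=\chi(\partial U)-\chi(F)=0-(-1)=1$, so $F^{*}$ is a disk. Thus the torus $\partial U$ is the once-punctured torus $F\subset\Sigma$ capped off by the single disk $F^{*}$ lying just inside $V$.

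Finally I would put the cable on $\Sigma$. By the definition of cabling, $K={\mathsf {cable}}(T_{m,n}, J)$ is isotopic to a simple closed curve of a definite slope on $\partial N(J)$, and under the identification of $\partial N(J)$ with $\partial U$ from the first step this slope is realized by an essential simple closed curve $c\subset\partial U$, connected because $\gcd(m,n)=1$. Since $F^{*}$ is a disk, $c$ admits no essential intersection with $F^{*}$ and may be isotoped on $\partial U$ entirely off $F^{*}$ into $F$. This isotopy takes place on the torus $\partial U\subset M$, hence is an isotopy of $M$, and it lands $c$ as a simple closed curve $\gamma\subset F\subset\Sigma$ of the cable slope; therefore $\gamma$ is an embedding of $K$ with $\gamma\subset\Sigma$, as desired. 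I expect the main obstacle to be the first step: verifying carefully that $U=N(J\cup D)$ is a genuine tubular neighborhood of $J$ and that its meridian--longitude framing agrees with that of $\partial N(J)$, so that the cable slope transported to $\partial U$ really is the slope of $K$. Once that framing bookkeeping is pinned down, capping the once-punctured torus by the disk $F^{*}$ makes the rest routine, and the surface-framing ambiguity is harmless, since it only shifts which slope on $\partial U$ we push into $F$ while every slope is available there.
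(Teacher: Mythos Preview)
Your argument is correct and is essentially the standard one. The paper does not actually supply a proof of this lemma; it is only quoted from \cite[Lemma 6.1]{Z1}, so there is nothing in the present paper to compare your write-up against. For what it is worth, Zupan's original argument proceeds along the same lines as yours: the neighborhood of $J\cup\partial D$ in $\Sigma$ is a once-punctured torus which, capped off by the disk $F^{*}$ you describe, is isotopic in $E(J)$ to $\partial N(J)$; hence every slope on $\partial N(J)$---in particular the cable slope---can be realized inside $\Sigma$. Your closing remark that the surface-framing ambiguity is harmless is exactly right: since $F^{*}$ is a disk, \emph{every} essential simple closed curve on $\partial U$ isotopes into $F$, so one never needs to identify which slope on $\partial U$ corresponds to the preferred $(m,n)$-slope on $\partial N(J)$.
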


This lemma tells us that for $K_{p/q}$ a 2-bridge knot, $K={\mathsf {cable}}(T_{m,n}, K_{p/q})$ satisfies $b_2(K)=0$, since every $K_{p/q}$ can be isotoped into a genus 2 Heegaard surface with each handle having a single arc of the knot traversing it.



 
\section{Results about surfaces}\label{sec:Alex}

In this section we introduce and prove some technical results needed for the proof of Theorem \ref{thm:BSofC2b}.

\begin{lemmass}\label{lem:eulerofessent}
The surface $S_n(n_1,\ldots,n_{k-1})$ has Euler characteristic $-n(k-1)$.

\end{lemmass}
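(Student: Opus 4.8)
The plan is to compute the Euler characteristic directly from the description of $S_n(n_1,\ldots,n_{k-1})$ as $n$ parallel vertical sheets carried by the branched surface $\Sigma[b_1,\ldots,b_k]$, using the fact that Euler characteristic is additive when we cut a surface into pieces along arcs and reassemble. First I would recall that the surface consists of $n$ vertical sheets running alongside the bands of the square-tower form of the knot, and that the only place where the combinatorics is interesting is at the $k-1$ plumbing squares, where the $n$ sheets split into $n_i$ sheets entering the inner plumbing square and $n-n_i$ sheets entering the outer plumbing square. Away from the plumbing squares each sheet is just a disk (or band) contributing trivially, so the entire Euler-characteristic deficit is concentrated at the plumbings.

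The key computation is to isolate a single plumbing square and determine how much it decreases the Euler characteristic. The cleanest way to do this is to build $S_n(n_1,\ldots,n_{k-1})$ up from a disjoint union of disks by gluing. If we start with the $n$ sheets before any plumbing, the surface is a disjoint union of disks, so its Euler characteristic is $n$ times the Euler characteristic of a disk; then each plumbing square is where the inner sheets and outer sheets are joined together through the pillowcase/plumbing identification. I would argue that each plumbing square identifies sheets in a way that is equivalent to attaching a $1$-handle (a band) connecting previously separate pieces, and that the number of such band-attachments contributed by the $i$-th plumbing square is exactly $n$ regardless of the value of $n_i$ (since all $n$ sheets pass through the $i$-th level, merely sorted into inner/outer). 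Since attaching a band decreases $\chi$ by $1$, the total deficit is $n$ per plumbing square, giving $\chi(S_n)=\chi(\text{initial disks}) - n(k-1)$.

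The main obstacle, and the step that must be done carefully, is pinning down the correct bookkeeping of the ``initial'' Euler characteristic and verifying that the net effect of each plumbing is precisely $-n$ independent of $n_i$. Concretely I would set up the count so that the initial collection of sheets contributes $0$ to the final answer — for instance by observing that $S_n(n_1,\ldots,n_{k-1})$ is built from $n(k-1)$ elementary pieces each of Euler characteristic that, after all the identifications at the top and bottom caps of the tower and along the bands, cancels down to leave exactly $-n(k-1)$. An alternative and perhaps more robust route is to exhibit a CW or handle decomposition of the surface directly: count vertices, edges, and faces (or $0$-, $1$-, and $2$-handles) coming from the $n$ sheets and the $k-1$ plumbing squares, and check that $V-E+F = -n(k-1)$. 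The delicate point in either approach is confirming that the claimed formula is genuinely independent of the $n_i$ and of the signs $b_i$, which matches the geometric expectation that isotopies changing $n_i$ (as in part (5) of Theorem~\ref{thm:HTthm}) preserve $\chi$; I would use this independence as a sanity check on the computation.
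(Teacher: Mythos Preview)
Your proposal is a plan rather than a proof: you explicitly defer the one step that matters, namely ``pinning down the correct bookkeeping of the `initial' Euler characteristic and verifying that the net effect of each plumbing is precisely $-n$.'' Until that is done, nothing has been shown. The band-attachment picture you sketch is workable in principle, but as written it is unclear what your ``initial disks'' are (the $n$ sheets along each of the $k$ twist regions? the caps? both?), and without that the claim that the plumbings account for exactly $-n(k-1)$ is an assertion, not an argument.

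The paper bypasses all of this bookkeeping with a single observation: $S_n(n_1,\ldots,n_{k-1})$ deformation retracts onto a graph. For $n=1$, each plumbing square (inner or outer) collapses to a vertex and each vertical band collapses to an edge; the outer squares, lying in $S^3$, can be pushed to a point at infinity. One reads off $k-1$ vertices and $2(k-1)$ edges, so $\chi=-(k-1)$. For general $n$ the retract consists of $n$ copies of this graph, with the particular choices $n_i$ affecting only which edges are incident to which vertices, not the total count; hence $\chi=-n(k-1)$. This is essentially your ``alternative and perhaps more robust route'' via a CW decomposition, but sharpened by the realization that the surface retracts onto a $1$-complex, so there are no $2$-cells to track and the count $V-E$ is immediate. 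If you want to salvage your main approach, the cleanest fix is to abandon the disks-plus-bands framing and go straight to this graph.
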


\begin{proof}

For a 2-bridge knot $K_{p/q}$, with $p/q=r+[b_1,\ldots,b_k]$, consider the single sheeted surface $S_1(n_1,\ldots,n_{k-1})$, see Section \ref{sec:HT}.  The vertical bands deformation retract to an edge and the plumbing squares deform to a vertex. As our knot lives in $S^3$, the outer squares can be deformed to a point at infinity.  See Figure \ref{fig:deformgraph}. Thus, we get a graph with $k-1$ vertices and $2(k-1)$ edges. The Euler characteristic of this graph and surface is $-(k-1)$. Given an $n$-sheeted surface,  $S_n(n_1,\ldots,n_{k-1})$, there are $n$ copies of this graph, not taking into account how these graphs are connected.  But, this is only a change in adjacency, not in the number of vertices and edges, so it has no effect on the Euler characteristic.  Thus, $\chi \left( S_n(n_1,\ldots,n_{k-1}) \right)=-n(k-1)$.\end{proof}



\begin{figure}[h]
\begin{center}
\begin{tikzpicture}[yscale=1]


\draw (1,5) to  [out=120,in=180] (1,6) to  [out=0,in=60] (1,5) ;
\draw [fill] (1,5) circle [radius=0.08];
\node [left] at (1,5) {$v_1$};

\draw (1,5) to  [out=240,in=120] (1,4);
\draw (1,5) to  [out=300,in=60] (1,4);
\draw [fill] (1,4) circle [radius=0.08];
\node [left] at (1,4) {$v_2$};

\draw (1,4) to  [out=240,in=90] (.85,3.5);
\draw (1,4) to  [out=300,in=90] (1.15,3.5);

\draw [fill] (1,3.2) circle [radius=0.04];
\draw [fill] (1,3) circle [radius=0.04];
\draw [fill] (1,2.8) circle [radius=0.04];

\draw (1,2) to  [out=120,in=270] (.85,2.5);
\draw (1,2) to  [out=60,in=270] (1.15,2.5);
\draw [fill] (1,2) circle [radius=0.08];
\node [left] at (1,2) {$v_{k-1}$};

\draw (1,2) to  [out=240,in=180] (1,1) to  [out=0,in=300] (1,2) ;

\end{tikzpicture}

\caption{The deformation retract of  $S_1(n_1,\ldots,n_{k-1})$.}\label{fig:deformgraph}
\end{center}
\end{figure}
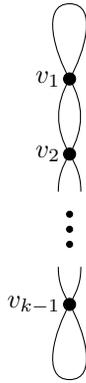

From this lemma, together with Theorem \ref{thm:HTthm} part (2), we see that all non-closed incompressible surfaces in the exterior of any non-torus 2-bridge knot have negative Euler characteristic.  This is because $k$ represents the number of steps in the partial fraction decomposition, which in turn corresponds to the number of twist regions in the vertical diagram of the 2-bridge knot, see Figure \ref{fig:figure8Asurface}.  So, for a 2-bridge knot $K_{p/q}$, $k$ is zero if and only if $K_{p/q}$ is the unknot and $k$ is one if and only if $K_{p/q}$ is a torus knot. This proves the follow lemma.

\begin{lemmass}\label{lem:eulerneg}

The surfaces $S_n(n_1,\ldots,n_k)$ of a non-torus 2-bridge knot have negative Euler characteristic.

\end{lemmass}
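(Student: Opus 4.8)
The plan is to reduce the statement entirely to Lemma~\ref{lem:eulerofessent}, which already computes $\chi\left(S_n(n_1,\ldots,n_{k-1})\right) = -n(k-1)$. This quantity is strictly negative precisely when $n \geq 1$ and $k \geq 2$, so the whole content of the lemma is to verify these two bounds for the surfaces arising in the exterior of a non-torus 2-bridge knot $K_{p/q}$. The paragraph preceding the statement essentially records the argument; my job is to assemble it carefully.

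First I would dispose of the factor $n$: by the construction in Section~\ref{sec:HT}, the surface $S_n(n_1,\ldots,n_{k-1})$ consists of $n$ parallel vertical sheets with $n \geq 1$, so this factor never vanishes. The substantive step is to show $k \geq 2$. Here $k$ is the length of a partial fraction decomposition $p/q = r + [b_1,\ldots,b_k]$, and by the square-tower form of the diagram it equals the number of twist regions of $K_{p/q}$. I would argue via the two boundary cases: if $k = 0$ then $p/q$ is an integer and the corresponding $4$-plat closes to the unknot, while if $k = 1$ then $p/q = r + 1/b_1$ is a single twist region, whose closure is a $(2,b_1)$-torus knot. Hence any 2-bridge knot that is neither the unknot nor a torus knot satisfies $k \geq 2$ in every admissible decomposition. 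I expect the main obstacle to be making this dichotomy precise rather than merely asserting it — namely confirming that a single twist region genuinely yields a torus knot. This is the standard correspondence between continued fractions and 2-bridge knots, so I would invoke it by citing \cite{KL} and \cite{HT} rather than reproving the classification, and I would note that Theorem~\ref{thm:HTthm}(2) already restricts attention to expansions with $|b_i| \geq 2$, which is harmless for the counting of twist regions.

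With $n \geq 1$ and $k \geq 2$ established, I would conclude $\chi\left(S_n(n_1,\ldots,n_{k-1})\right) = -n(k-1) \leq -n \leq -1 < 0$, giving exactly the claimed negativity of the Euler characteristic and completing the proof.
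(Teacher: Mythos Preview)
Your proposal is correct and follows essentially the same route as the paper: both reduce to Lemma~\ref{lem:eulerofessent}, identify $k$ with the number of twist regions, and dispose of the cases $k=0$ (unknot) and $k=1$ (torus knot) to force $k\geq 2$. Your write-up is slightly more explicit about the bound $n\geq 1$ and about citing the continued-fraction classification, but the argument is the same one the paper gives in the paragraph immediately preceding the lemma.
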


Another important property of 2-bridge knots is that they are meridionally small.  

\begin{definitionss}\label{def:meridianallySmall}

A knot $K\subset S^3$ is {\em meridionally small} if $E(K)$ contains no essential surface $S$ with $\partial S$ consisting of meridian curves of $N(K)$.

\end{definitionss}

\begin{lemmass}\label{lem:2bridge are msmall}
Every 2 bridge knot is meridionally small.
\end{lemmass}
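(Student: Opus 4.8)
The plan is to use Hatcher and Thurston's classification of incompressible surfaces in 2-bridge knot complements (Theorem \ref{thm:HTthm}) together with the Euler characteristic computation of Lemma \ref{lem:eulerofessent}. The key observation is that meridional smallness is a statement about essential surfaces whose boundary consists of meridian curves, so I must rule out the existence of any incompressible, $\partial$-incompressible surface $S$ in $E(K_{p/q})$ with $\partial S$ consisting entirely of meridians of $N(K_{p/q})$. A torus 2-bridge knot is a special case handled separately (or excluded, since torus knots are well understood), so I will focus on the non-torus case where the surfaces $S_n(n_1,\ldots,n_{k-1})$ are the complete list of non-closed essential surfaces up to isotopy.

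First I would dispose of the closed case: by part (1) of Theorem \ref{thm:HTthm}, the only closed incompressible surface is the boundary torus of the tubular neighborhood of $K_{p/q}$, which has empty boundary and hence cannot have boundary consisting of meridian curves; so no closed essential surface is relevant. Next, for the non-closed case, part (2) of Theorem \ref{thm:HTthm} tells me that any incompressible, $\partial$-incompressible surface is isotopic to some $S_n(n_1,\ldots,n_{k-1})$ carried by a branched surface $\Sigma[b_1,\ldots,b_k]$. The heart of the argument is then to analyze the boundary slopes of these surfaces: I must show that the boundary curves $\partial S_n(n_1,\ldots,n_{k-1})$ on $\partial E(K_{p/q})$ are never meridional. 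This is where I would invoke Hatcher and Thurston's explicit description of the boundary slopes of these surfaces, which are computed from the continued fraction data $[b_1,\ldots,b_k]$ and are known to be even integers (equivalently, nonmeridional slopes), never the meridian slope $0$ associated with $\partial N(K)$.

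The main obstacle will be pinning down the boundary slope computation rigorously, since the definition of meridionally small requires that the boundary curves be meridians, and one must verify that the surfaces from Hatcher--Thurston simply never realize the meridional slope. I expect the cleanest route is to cite the boundary slope formula from \cite{HT}: the surfaces $S_n(n_1,\ldots,n_{k-1})$ carry boundary curves of slope determined by the continued fraction expansion, and these slopes are always nonzero (in the standard framing where the meridian has slope $\infty$ or $0$ depending on convention). Since every incompressible, $\partial$-incompressible surface with nonempty boundary is one of these, and none has meridional boundary, there is no essential surface with meridional boundary, proving that $K_{p/q}$ is meridionally small. An alternative, if the slope computation is delicate, is to argue by Euler characteristic and the structure of the spanning of the branched surface, but the boundary-slope approach via \cite{HT} is the most direct and is the expected proof.
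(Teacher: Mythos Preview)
Your proposal is correct and follows essentially the same route as the paper: both arguments invoke the Hatcher--Thurston classification (Theorem~\ref{thm:HTthm}) to conclude that every non-closed essential surface in $E(K_{p/q})$ is some $S_n(n_1,\ldots,n_{k-1})$ with integral boundary slope, hence never meridional. The paper's proof is simply a two-line version of this, citing Remark~\ref{rmk:HT} directly; your mentions of Lemma~\ref{lem:eulerofessent} and the separate torus-knot case are unnecessary, and the slight wobble over whether the meridian has slope $0$ or $\infty$ should be cleaned up (in this paper's convention the meridian is $1/0$), but none of this affects the validity of the argument.
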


\begin{proof}
We notice from Remark \ref{rmk:HT} that all essential surfaces in $E(K)$ either are disjoint from $\partial E(K)$ or intersect $\partial E(K)$ with integer slope. In either case, they do not intersect the boundary in meridian curves. 
\end{proof}

Zupan, in \cite{Z2}, proves the following theorem.

\begin{theoremss}\label{thm:cableMeridianSmall}{{[Theorem 6.6], \cite{Z2}}}
For a knot $J$ in $S^3$, the following are equivalent:
\begin{enumerate}
\item J is meridionally small.
\item Every cable of J is meridionally small.
\item There exists a cable of J that is meridionally small.
\end{enumerate}\end{theoremss}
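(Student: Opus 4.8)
The plan is to exploit the toroidal decomposition of the cable exterior together with the two cable-space lemmas (Lemmas \ref{lem:Z3.2} and \ref{lem:Z3.3}) to transport meridional essential surfaces back and forth across the cabling torus. Fix a cable $K=\mathsf{cable}(T_{m,n},J)$. As in subsection \ref{sec:cable}, its exterior decomposes as $E(K)=E(J)\cup_T C_{m,n}$, where the gluing torus $T=\partial E(J)$ is identified with $\partial_+C_{m,n}$ (the framing convention sends the meridian of $J$ to the meridional slope of $\partial_+C_{m,n}$), while $\partial_-C_{m,n}=\partial N(K)$. I will repeatedly use one distinguished surface: the meridian disk $D$ of the solid torus $V$ meets $T_{m,n}$ in $m$ points, so after removing $\eta(T_{m,n})$ it becomes a planar surface $P_0\subset C_{m,n}$ whose boundary consists of one meridian on $\partial_+C_{m,n}$ and $m$ meridians on $\partial_-C_{m,n}$; this is exactly the sort of surface to which Lemma \ref{lem:Z3.3} applies. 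Since $T$ is an incompressible torus in the irreducible manifold $E(K)$, any essential surface can be isotoped to meet $T$ in curves essential on both sides. Because $(2)$ trivially implies $(3)$, it suffices to prove $(3)\Rightarrow(1)$ and $(1)\Rightarrow(2)$, both by contraposition.

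For $(3)\Rightarrow(1)$, I assume $J$ is not meridionally small, so $E(J)$ contains an essential surface $S'$ whose boundary is a collection of $j\geq 1$ meridians of $J$ on $T=\partial_+C_{m,n}$. For an \emph{arbitrary} cable $K$, I would cap off each of these boundary curves with a parallel copy of the meridional planar surface $P_0$, producing a surface $\widehat S=S'\cup(j\text{ copies of }P_0)\subset E(K)$ whose boundary consists of $jm$ meridians of $K$ on $\partial N(K)$. The claim is that $\widehat S$ is essential in $E(K)$: incompressibility follows from the incompressibility of $S'$ in $E(J)$, of $P_0$ in $C_{m,n}$, and of $T$, by the standard innermost-disk argument for surfaces glued along an incompressible torus; $\partial$-incompressibility and the failure to be boundary-parallel are checked similarly, using that $P_0$ is not boundary-parallel in $C_{m,n}$ and that the meridional slope on $\partial_-C_{m,n}$ is matched to the meridional slope on $\partial_+C_{m,n}$ as recorded by Lemma \ref{lem:Z3.3}. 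This produces an essential meridional surface in $E(K)$, so every cable of $J$ fails to be meridionally small, which is the contrapositive of $(3)\Rightarrow(1)$.

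For $(1)\Rightarrow(2)$, I assume some cable $K$ is not meridionally small, witnessed by an essential surface $S\subset E(K)$ with $\partial S$ a union of meridians of $K$ on $\partial_-C_{m,n}$. Isotope $S$ to minimize $|S\cap T|$; the usual argument shows $S\cap T$ consists of essential parallel curves and that each piece $S_C:=S\cap C_{m,n}$ and $S_J:=S\cap E(J)$ is incompressible in its piece. Now $S_C$ is incompressible in $C_{m,n}$ with $S_C\cap\partial_-C_{m,n}$ meridional, so Lemma \ref{lem:Z3.3} forces $S_C\cap\partial_+C_{m,n}=S\cap T$ to be meridional as well. The degenerate case $S\cap T=\emptyset$, in which $S$ would be an essential surface in the Seifert-fibered cable space $C_{m,n}$ with meridional boundary only on $\partial_-$, is excluded by the classification of essential surfaces there, since vertical and horizontal surfaces in $C_{m,n}$ never have purely meridional boundary. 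Hence the curves $S\cap T$ are meridians of $J$, and $S_J$ is an incompressible surface in $E(J)$ with meridional boundary; after discarding any boundary-parallel components and $\partial$-compressing where possible, one obtains an essential meridional surface in $E(J)$, so $J$ is not meridionally small.

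The main obstacle I anticipate is the essentiality bookkeeping rather than the slope analysis. The slope matching across $T$ is handed to us by Lemmas \ref{lem:Z3.2} and \ref{lem:Z3.3}, but I must argue carefully that cutting $S$ along $T$ (in $(1)\Rightarrow(2)$) does not destroy $\partial$-incompressibility or leave only boundary-parallel pieces, and dually that the capped surface $\widehat S$ (in $(3)\Rightarrow(1)$) is genuinely incompressible, $\partial$-incompressible, and not boundary-parallel. Controlling these degenerations, together with handling the $S\cap T=\emptyset$ case via the Seifert-fibered structure, is where the real work lies; Lemma \ref{lem:Z3.2} is the tool I would reserve for ruling out non-meridional integral slopes that could otherwise survive the gluing and spoil the conclusion.
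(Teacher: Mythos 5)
The first thing to say is that the paper contains no proof of this statement: Theorem \ref{thm:cableMeridianSmall} is imported verbatim from Zupan \cite{Z2} (his Theorem 6.6) and is used here only as a black box, in Case 2 of the proof of Theorem \ref{thm:BSofC2b}. So there is no internal argument to compare yours against; what you have written is a reconstruction of the cited source, and your architecture is in fact the standard one: decompose $E(K)=E(J)\cup_T C_{m,n}$, use the cable-space lemmas to transport meridional slopes across $C_{m,n}$, exclude the $S\cap T=\emptyset$ case by the Seifert-fibered classification (correct: vertical boundary has the integral fiber slope, and a horizontal surface must cover both ends of the base annulus, hence must meet $\partial_+C_{m,n}$), and for the converse graft copies of the $m$-times-punctured meridian disk $P_0$ onto an essential meridional surface in $E(J)$. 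One attribution to fix: the matching of meridians of $J$ with meridians of $\partial_+C_{m,n}$ is not ``recorded by Lemma \ref{lem:Z3.3}'' nor a consequence of the preferred framing — it holds for every framing, simply because the cabling construction glues $V$ back in place of $N(J)$, so the meridian disk of $V$ is identified with a meridian disk of $N(J)$. Lemma \ref{lem:Z3.3} is needed only to cross the cable space, not the gluing torus.

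The genuine gap, which you half-acknowledge, is the final step of $(1)\Rightarrow(2)$: ``after discarding any boundary-parallel components and $\partial$-compressing where possible, one obtains an essential meridional surface in $E(J)$.'' Boundary-compression does not preserve meridional boundary for free. A $\partial$-compressing arc for $S_J$ lies in an annulus component of $\partial E(J)\setminus \partial S_J$, and band-summing two adjacent meridians along an essential spanning arc of that annulus produces a boundary curve that bounds a disk in $\partial E(J)$; one must cap that trivial circle off using irreducibility of $E(K_{})$ or $E(J)$, after which incompressibility can fail again, so what is actually required is an induction on a complexity (number of boundary curves, negative Euler characteristic) together with an argument that the process does not consume everything — i.e., that it cannot terminate with only closed or $\partial$-parallel components. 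The $\partial$-parallel case should be killed by minimality of $|S\cap T|$ (a $\partial$-parallel piece can be pushed across $T$), but note that for a general knot $J$ you have no Hatcher--Thurston-type classification available, unlike elsewhere in this paper, so none of this can be waved off by quoting Theorem \ref{thm:HTthm}. The same bookkeeping is asserted but not carried out for $\widehat{S}$ in $(3)\Rightarrow(1)$: incompressibility does follow from the innermost-disk argument across the incompressible torus $T$, but $\partial$-incompressibility and non-$\partial$-parallelism of the glued surface need the analogous outermost-arc analysis. In short: your plan is the right one and matches the cited proof's strategy, but as written it is a plan; the meridional $\partial$-compression induction is precisely the content that \cite{Z2} supplies and that your sketch leaves unproven.
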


The following is a modification of Lemma 3.5 from \cite{Z1}.

\begin{lemmass}\label{lem:Z3.5}
Let $K={\mathsf {cable}}(T_{m,n}, K_{p/q})$ be a cable of a 2-bridge knot.  Suppose $S\subset E(K)$ is an essential surface.  If $S$ is not isotopic to the boundary torus of $K_{p/q}$, then $S\cap \partial E(K)$ is nonempty and has integral slope.
\end{lemmass}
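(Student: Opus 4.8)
The plan is to exploit the decomposition coming from the cabling construction (see subsection \ref{sec:cable}), namely
$$E(K)=E(K_{p/q})\cup_T C_{m,n},$$
where $T=\partial E(K_{p/q})=\partial_+C_{m,n}$ is the cabling torus and $\partial E(K)=\partial_-C_{m,n}$. Both $E(K_{p/q})$ (with $K_{p/q}$ nontrivial) and $C_{m,n}$ (the meridian disk of the solid torus is punctured by $T_{m,n}$) have incompressible boundary, so $T$ is incompressible in $E(K)$. I would assume $S$ connected and isotope it to meet $T$ transversely in the fewest components. Since $S$ and $T$ are incompressible, an innermost-disk argument removes any intersection circle bounding a disk on either side; as $T$ is closed, $S\cap T$ is a family of simple closed curves, all essential on $T$ and hence all of one slope $\gamma$. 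The same minimality, with the incompressibility of $T$ and essentiality of $S$, makes each piece $S\cap E(K_{p/q})$ and $S\cap C_{m,n}$ essential in its side and free of boundary-parallel components; establishing $\partial$-incompressibility of the pieces is the delicate point and is discussed below.

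First I would dispose of the case $S\cap T=\emptyset$, where $S$ lies in a single side. If $S\subset E(K_{p/q})$ then $S$ is closed (its only possible boundary lies on $\partial_-C_{m,n}$, which is not in $E(K_{p/q})$), so by Theorem \ref{thm:HTthm}(1) and Remark \ref{rmk:HT} it is isotopic to $T$, contradicting the hypothesis. If $S\subset C_{m,n}$ then $S$ is an essential surface in the cable space disjoint from $\partial_+C_{m,n}$; since cable spaces are Seifert fibered (subsection \ref{sec:cable}), $S$ is vertical or horizontal, and a horizontal surface meets every boundary torus while a vertical annulus or torus missing $\partial_+$ is boundary-parallel, hence inessential. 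So this case cannot occur, and henceforth $S\cap T\neq\emptyset$.

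Next I would pin down $\gamma$. The piece $S\cap E(K_{p/q})$ is a non-closed essential surface in the exterior of the $2$-bridge knot $K_{p/q}$ with boundary of slope $\gamma$ on $T$. By Theorem \ref{thm:HTthm}(2) each component is isotopic to a Hatcher--Thurston surface $S_n(n_1,\ldots,n_{k-1})$; these have integral (never meridional, consistent with Lemma \ref{lem:2bridge are msmall}) boundary slope. Hence $\gamma$ is integral on $T=\partial_+C_{m,n}$. Applying Lemma \ref{lem:Z3.2} to the incompressible surface $S\cap C_{m,n}$, integrality of $S\cap\partial_+C_{m,n}$ forces $S\cap\partial_-C_{m,n}=S\cap\partial E(K)$ to be integral as well. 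For nonemptiness, if $S\cap\partial E(K)$ were empty then $S\cap C_{m,n}$ would be an essential surface in the cable space with boundary only on $\partial_+C_{m,n}$, which the Seifert-fibered dichotomy of the second paragraph again excludes.

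The main obstacle I expect is the passage from ``$S$ essential in $E(K)$'' to ``each piece essential on its side.'' Incompressibility of the pieces follows routinely from minimal position, but the $\partial$-incompressibility of $S\cap E(K_{p/q})$ with respect to $T$, and the exclusion of boundary-parallel pieces, require a careful isotopy argument showing that any $\partial$-compression or boundary-parallel component could be used to reduce $|S\cap T|$. This is precisely the step that licenses invoking the Hatcher--Thurston classification on the $2$-bridge side and Lemma \ref{lem:Z3.2} on the cable-space side, and so it is where the proof must be made rigorous.
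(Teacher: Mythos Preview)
Your proposal is correct and follows essentially the same route as the paper: the same decomposition $E(K)=E(K_{p/q})\cup_T C_{m,n}$, minimization of $|S\cap T|$, the innermost-disk argument for incompressibility of the pieces, the same case split on whether $S\cap T$ is empty, and the same conclusion via Hatcher--Thurston on the $2$-bridge side followed by Lemma~\ref{lem:Z3.2} on the cable-space side. In fact you are somewhat more careful than the paper, which neither argues nonemptiness of $S\cap\partial E(K)$ explicitly nor addresses the $\partial$-incompressibility of the pieces before invoking Theorem~\ref{thm:HTthm}; the concern you flag in your final paragraph is real, and the paper simply asserts the Hatcher--Thurston classification applies after proving only incompressibility.
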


\begin{proof}
Let $T$ denote the boundary torus of $E(K_{p/q})$.  Recall that $C_{m,n}$ is a cable space, see page \pageref{def:cablespace}.  After isotopy, we may assume $\vert S\cap T\vert$ is minimal.  

\noindent {\em Claim}: Each component of $S\cap C_{m,n}$ is incompressible in $C_{m,n}$ and each component of $S\cap E(K_{p,q})$ is incompressible in $E(K_{p,q})$.  

To prove this claim, let $D$ be a compressing disk for $S\cap C_{m,n}$ in $C_{m,n}$.  Then $\partial D$ bounds a disk $D'\subset S$ by the incompressibility of $S$, where $D'\cap T\not = \emptyset$. By the irreducibility of $E(K)$, there is an isotopy of $S$ pushing $D'$ onto $D$ which reduces $\vert S\cap T\vert$, yielding a contradiction.  The argument is similar for $S\cap E(K_{p,q})$ being incompressible in $E(K_{p,q})$, proving the claim.

If $S\cap T = \emptyset$, then $S\subset C_{m,n}$ or $S\subset E(K_{p,q})$. If $S\subset C_{m,n}$, then by our claim, each component of $S\cap C_{m,n}$ is incompressible in $C_{m,n}$, and thus $S$ is closed and vertical and the only such surfaces are boundary parallel by \cite{Wald}.  Similarly, if $S\subset E(K_{p,q})$, then by our claim, each component of $S\cap E(K_{p,q})$ is incompressible in $E(K_{p,q})$ and then by Remark \ref{rmk:HT}, $S$ is closed and thus boundary parallel.  Thus, $S$ is isotopic to $T$. 

If $S\cap T \not= \emptyset$, then $S\cap E(K_{p,q})$ is one of the surfaces classified by Theorem \ref{thm:HTthm} and so $S\cap E(K_{p,q})$ has integer slope.  The surface $T$ is also $\p_+C_{m,n}$.   Then we apply Lemma \ref{lem:Z3.2} which tells us that $S$ has integer slope in $\p_-C_{m,n}$, thus $S\cap \p E(K)$ has integer slope. \end{proof}


\section{Bridge spectra of Montesinos and pretzel knots}\label{ch:MontAndPret}


Before determining the bridge spectra in subsection \ref{sec:BSMontandPret}, we begin subsection \ref{sec:TunnelBridge} with further results on the relationship between tunnel number and bridge spectrum.

\subsection{Tunnel number and bridge spectrum}\label{sec:TunnelBridge}
The next proposition appears in \cite{MSY} without proof; we provide one here for completeness. Also, recall a knot is primitive in a $(g,b)$-splitting if it transversely intersects the boundary of a properly embedded essential disk of the handlebody in a single point. See subsection \ref{sec:bridgespectrum}.

\begin{proposition} \label{prop:MSYsprop}
If $K$ is a knot with a $(g,b)$-splitting satisfying either $b>0$, or $b=0$ and $K$ is primitive, then $t(K)\leq g+b-1$.

\end{proposition}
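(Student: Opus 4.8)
The plan is to start from a $(g,b)$-splitting of $(S^3,K)$, written as $(V_1,A_1)\cup_\Sigma(V_2,A_2)$, and convert it into a Heegaard splitting of $S^3$ in which $K$ sits as a core of a handle, so that Morimoto's reformulation of tunnel number (Definition \ref{def:Mtunnel}) applies directly. The key idea is that a trivial arc, together with its bridge disk, can be absorbed into the handlebody structure: since $A_1$ consists of $b$ trivial arcs in $V_1$, each cobounding a bridge disk with an arc in $\Sigma$, I can use these bridge disks to isotope and tube the handlebody so that the arcs of $K$ become cores of additional handles rather than bridge arcs lying in the interior.

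The main steps I would carry out are as follows. First, handle the case $b>0$. Take one of the $b$ trivial arcs $\alpha\subset A_1$ with its bridge disk $D$. I would thicken $\alpha$ and incorporate it into $V_1$ as a handle, viewing the neighborhood $N(\alpha)$ together with the bridge disk as exhibiting $\alpha$ as a core of a $1$-handle attached to $V_1$; doing this for all arcs on both sides produces a Heegaard surface $\Sigma'$ of genus $g+b$ (each of the $b$ arcs on, say, $V_1$ contributes one handle, and the complementary arcs are absorbed consistently) such that $K$ lies as a core of $V_1'$. More carefully, I expect the right bookkeeping is to use meridional stabilization repeatedly: starting from the $(g,b)$-splitting and applying meridional stabilization $b-1$ times reduces to a $(g+b-1,1)$-splitting, and a $(g+b-1,1)$-splitting with a single trivial arc is exactly the situation where $K$ can be realized as (an arc isotopic into) a core of a genus $g+b-1$ handlebody after one further adjustment. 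The genus of the relevant handlebody $V_1$ in Morimoto's setup is then $g+b$, minus the one that is subtracted in his definition, giving $t(K)\le (g+b)-1=g+b-1$. I would verify the handle count carefully here, since the bound is tight.

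For the case $b=0$ with $K$ primitive, the argument is more direct: by hypothesis $K$ is isotopic into a genus $g$ Heegaard surface $\Sigma$ and is primitive, meaning $K$ intersects the boundary of some essential disk $E$ of one side, say $V_1$, transversely in a single point. Primitivity is exactly the condition that lets one slide $K$ across this disk to realize it as a core of a handle: cutting $V_1$ along $E$ and using the single intersection point, $K$ becomes a core of a genus $g$ handlebody in a Heegaard splitting of $S^3$. Then Morimoto's definition gives $t(K)\le g-1=g+b-1$, completing this case.

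The hard part will be making the handle-counting in the $b>0$ case fully rigorous, in particular confirming that the Heegaard genus one lands on is exactly $g+b$ (so that the $-1$ in Morimoto's definition yields precisely $g+b-1$) rather than off by one, and checking that the trivial-arc and bridge-disk data really do let $K$ be isotoped to a core, not merely to an arc embedded in the handlebody. The cleanest route is probably to phrase everything through meridional stabilization (Proposition \ref{prop:BSdecrease} and the construction preceding it), reducing to the single-arc case and then invoking the primitive/core argument once; the subtlety is ensuring that after the stabilizations the resulting single arc is genuinely unknotting-tunnel data, which is where I would spend the most care.
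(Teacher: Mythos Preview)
Your approach is correct and essentially identical to the paper's: both invoke Morimoto's reformulation (Definition~\ref{def:Mtunnel}) and use meridional stabilization for the $b>0$ case, and both handle the primitive $b=0$ case via the essential disk meeting $K$ once. The only difference is cosmetic --- the paper meridionally stabilizes all $b$ arcs at once (so that $K$ becomes a core of a genus $g+b$ handlebody directly), rather than stopping after $b-1$ stabilizations and performing a separate final adjustment; your worry about the last step dissolves if you simply stabilize once more.
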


\begin{proof}

Recall Definition \ref{def:Mtunnel}, the tunnel number of a knot $K$ is the minimum genus minus one over all Heegaard surfaces with that contain $K$ as its core.  Suppose first $b>0$; we can meridionally stabilize each of the $b$ arcs, see page \pageref{def:MeridionalStabilize}.  This takes the genus $g$ surface and adds in $b$ more handles, which gives us a $g+b$ genus surface. This is precisely the situation in Definition \ref{def:Mtunnel}. Hence, the new Heegaard surface is of genus $g+b$ with $K$ as the core. Thus the tunnel number is at most $g+b-1$. 

If $b=0$ and the knot is primitive, then by definition, there is an essential disk for the Heegaard surface which intersects the knot exactly once. This essential disk  must be a meridian disk and thus, the knot must have a tunnel number that is at most $g-1$. 
\end{proof}

This is the driving force behind the following proposition:

\begin{proposition} \label{prop:stairstep}
If $K$ is a knot $K$ with $t(K)+1=b(K)=b_0(K)$, then $K$ has the stair-step primitive bridge spectrum, ${\mathbf {\hat b}}(K)=(\hat{b}_0(K),\hat{b}_0(K)-1,\ldots,2,1,0)$.
\end{proposition}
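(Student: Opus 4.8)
The plan is to determine each term $\hat{b}_g(K)$ by trapping it between a lower bound forced by the tunnel number hypothesis and an upper bound forced by the stair-step ceiling. Write $n := b(K) = b_0(K) = t(K)+1$. Since $n \geq 1$, Proposition~\ref{prop:bsVSpbs} gives $\hat{b}_0(K) = b_0(K) = n$, so it suffices to prove $\hat{b}_g(K) = n - g$ for $0 \leq g \leq n$ (all later terms then being forced to $0$).

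For the lower bound I would show $\hat{b}_g(K) \geq n - g$ for every $g$. Fix $g$ and set $b := \hat{b}_g(K)$, realized by a $(g,b)$-splitting. By the definition of the primitive genus $g$ bridge number, this splitting either has $b > 0$, or has $b = 0$ with $K$ primitive in the genus $g$ Heegaard surface; in either case Proposition~\ref{prop:MSYsprop} applies and yields $t(K) \leq g + b - 1$. As $t(K) = n - 1$, this rearranges to $b \geq n - g$. The crucial point here, and the reason the statement concerns the primitive spectrum rather than the ordinary one, is that Proposition~\ref{prop:MSYsprop} requires the primitivity hypothesis exactly in the case $b = 0$, and the definition of $\hat{b}_g$ supplies precisely that.

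For the upper bound I would invoke Corollary~\ref{cor:boundedabove}: for each $g \leq n$ there is a $(g, n-g)$-splitting of $K$, so for $0 \leq g \leq n-1$ (where $n - g \geq 1$) we immediately obtain $\hat{b}_g(K) \leq n - g$. Combining with the lower bound forces $\hat{b}_g(K) = n - g$ on this range; in particular $\hat{b}_{n-1}(K) = 1$. For the final term $g = n$, since $\hat{b}_{n-1}(K) = 1 \geq 1$, meridional stabilization (Proposition~\ref{prop:BSdecrease}) gives $\hat{b}_n(K) \leq 0$, and together with $\hat{b}_n(K) \geq 0$ this yields $\hat{b}_n(K) = 0$. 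Hence $\hat{\mathbf b}(K) = (n, n-1, \ldots, 2, 1, 0)$.

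The argument is a clean two-sided estimate with no delicate computation, so the only genuine subtlety is bookkeeping the primitivity condition. I expect the one step deserving care is verifying that Proposition~\ref{prop:MSYsprop} really does apply to the minimizing splitting when $b = 0$; this is exactly where $b_g$ and $\hat{b}_g$ diverge, and it is why the \emph{ordinary} bridge spectrum need not be stair-step, as the torus knot computation in Proposition~\ref{prop:BSofT} already illustrates.
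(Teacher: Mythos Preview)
Your proof is correct and follows essentially the same two-sided trapping argument as the paper: the lower bound via Proposition~\ref{prop:MSYsprop} and the upper bound via Corollary~\ref{cor:boundedabove}. You are in fact more careful than the paper's own write-up about the primitivity hypothesis at $b=0$ and about the endpoint $g=n$, which is the right instinct.
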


\begin{proof}

By Proposition \ref{prop:MSYsprop}, for any $(g,b)$-splitting,  $t(K)\leq g+b -1$, and so $b_0(K)-1=t(K)\leq g+b_g(K)-1$ for all $g$, and so $b_g(K)\geq b_0(K)-g$. Corollary \ref{cor:boundedabove} states that for every knot $K$, and every $g$, there is a $(g,b_0(K)-g)$-splitting for $K$.  Since $b_g(K)\leq b_0(K)-g$, we get that $b_g(K)=b_0(K)-g$ for all $g$. 
\end{proof}

\subsection{Bridge spectra of Montesinos knots and pretzel knots}\label{sec:BSMontandPret}
\begin{theorem}\label{thm:bsgmk}
A generalized Montesinos knot or link, \newline $K=M\left( \{(\beta_{i,j},\alpha_{i,j})\}_{i=1,j=1}^{i=\ell,j=m}, \{B_i\}_{j=1}^{j=\ell -1} \right)$ with $\alpha =  \gcd \{\alpha_{i,j} \} \not = 1$ has the stair-step primitive bridge spectrum, ${\mathbf {\hat b}}(K)=(\hat{b}_0(K),\hat{b}_0(K)-1,\ldots,2,1,0)$.

\end{theorem}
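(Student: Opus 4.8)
The plan is to reduce Theorem~\ref{thm:bsgmk} to the machinery already assembled earlier in the paper. The key observation is that Theorem~\ref{LMt} of Lustig and Moriah tells us exactly that for a generalized Montesinos knot or link $K$ with $\alpha = \gcd\{\alpha_{i,j}\} \neq 1$, we have $\mathrm{rk}(\pi_1(S^3-K)) = t(K)+1 = b(K) = n$. In particular this gives the equality $t(K)+1 = b(K) = b_0(K) = n$, which is precisely the hypothesis of Proposition~\ref{prop:stairstep}.

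So the first (and essentially only) step I would take is to cite Theorem~\ref{LMt} to obtain $t(K)+1 = b(K) = b_0(K)$, identifying $b(K)$ with $b_0(K) = \hat{b}_0(K)$ via the standard fact (recalled in Subsection~\ref{sec:bridgespectrum}) that the classical bridge number coincides with the genus-zero bridge number for nontrivial knots. Then I would invoke Proposition~\ref{prop:stairstep} verbatim: any knot satisfying $t(K)+1 = b(K) = b_0(K)$ has the stair-step primitive bridge spectrum ${\mathbf {\hat b}}(K) = (\hat{b}_0(K), \hat{b}_0(K)-1, \ldots, 2, 1, 0)$. This immediately yields the conclusion.

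The argument is therefore a direct composition of two previously established results, and I expect no real obstacle in the main line of reasoning. The one point deserving a word of care is the bookkeeping between the index $n$ in Theorem~\ref{LMt} (the half-width of the $2n$-plat) and the bridge number $b_0(K)$, together with the distinction between $b_0$ and $\hat{b}_0$; one should confirm that the value of $b_0(K)$ produced by Theorem~\ref{LMt} is genuinely the genus-zero bridge number in the sense of Definition~\ref{def:genus $g$ bridge number}, so that Proposition~\ref{prop:stairstep} applies on the nose. Since Proposition~\ref{prop:stairstep} is stated and proved for the \emph{primitive} bridge spectrum ${\mathbf {\hat b}}$ (its proof going through Proposition~\ref{prop:MSYsprop} and Corollary~\ref{cor:boundedabove}), the conclusion it delivers is exactly the primitive stair-step spectrum claimed here, so no further adjustment between ${\mathbf b}$ and ${\mathbf {\hat b}}$ is needed.
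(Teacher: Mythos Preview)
Your proposal is correct and follows exactly the same route as the paper: the paper's proof is a single sentence stating that the result is immediate from Proposition~\ref{prop:stairstep} and Theorem~\ref{LMt}. Your extra remarks about the bookkeeping between $b_0$ and $\hat b_0$ and the index $n$ are reasonable caution but do not change the argument.
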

 
\begin{proof} This is immediate from Proposition \ref{prop:stairstep} and Theorem \ref{LMt}.
\end{proof}

Notice that Theorem \ref{thm:bsgmk} implies that any pretzel knot $K_n=K_n(p_1,\ldots,p_n)$ also has the stair-step primitive bridge spectrum if $\alpha=\text{gcd}(p_1,\ldots,p_n)\not = 1$.  It should be noted that pretzel knots  $K_n$ with $\alpha \not = 1$, via proposition \ref{prop:bsVSpbs}, while having stair-step primitive bridge spectra, have the bridge spectrum ${\mathbf b}(K_n)=(n,n-1,\ldots, 3,2,0)$. This is because any pretzel knot embeds into a genus $n-1$ surface. See, for example, Figure~\ref{fig:pretzelOnSurface}.  These facts prove the following corollary.

\begin{corollary}\label{cor:bsp}
Given a $K_n=K_n(p_1,\ldots,p_n)$ pretzel knot with $\gcd(p_1,\ldots,p_n)\not = 1$, the primitive bridge spectrum is stair-step, i.e. ${\mathbf {\hat b}}(K_n)=(n,n-1,\ldots,2,1,0)$, and ${\mathbf b}(K_n)=(n,n-1,\ldots,3,2,0)$.
\end{corollary}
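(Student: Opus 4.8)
The plan is to derive Corollary~\ref{cor:bsp} as an essentially immediate consequence of Theorem~\ref{thm:bsgmk} together with two supplementary facts about pretzel knots: that a pretzel knot is a generalized Montesinos knot (already noted in the excerpt, with the rational tangles being the integers $p_k$), and that a pretzel knot $K_n(p_1,\ldots,p_n)$ embeds into a genus $n-1$ Heegaard surface. First I would invoke Theorem~\ref{thm:bsgmk}: since each $p_k$ corresponds to an integer tangle $\alpha_{i,j}/\beta_{i,j}$, the hypothesis $\gcd(p_1,\ldots,p_n)\neq 1$ translates directly into $\alpha=\gcd\{\alpha_{i,j}\}\neq 1$, so the theorem applies and yields the stair-step primitive bridge spectrum ${\mathbf {\hat b}}(K_n)=(n,n-1,\ldots,2,1,0)$. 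This also records that $\hat{b}_0(K_n)=b_0(K_n)=n$, consistent with the Boileau--Zieschang bridge-number computation quoted before Definition~\ref{def:genMont}.

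The second half requires passing from the primitive spectrum to the ordinary spectrum ${\mathbf b}(K_n)$. Here I would use Proposition~\ref{prop:bsVSpbs}, which says the two spectra agree whenever $b_g(K)\neq 0$ and differ only possibly in the last nonzero entry. So the only entries that can change are those where $\hat b_g=1$ drops to $b_g=0$. The key geometric input is that $K_n(p_1,\ldots,p_n)$ embeds on a genus $n-1$ surface: visually, one builds a handlebody with $n-1$ handles so that each of the $n$ twist regions lies in a band on the surface, as indicated in Figure~\ref{fig:pretzelOnSurface}. This shows $b_{n-1}(K_n)=0$. Given that the primitive spectrum is stair-step of length ending in $(\ldots,2,1,0)$ with the $1$ occurring at genus $n-1$, replacing $\hat b_{n-1}=1$ by $b_{n-1}=0$ produces ${\mathbf b}(K_n)=(n,n-1,\ldots,3,2,0)$, with the nonprimitive spectrum simply omitting the length-one arc at the top genus.

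The main obstacle, such as it is, is verifying the genus $n-1$ embedding rigorously rather than by appeal to a figure, and confirming that the $b_g=0$ entry sits at exactly genus $n-1$ (not lower). To pin this down I would note that $b_{n-1}=0$ forces the stair-step spectrum to terminate at that index, and that no smaller genus can give $b_g=0$ since $b_1(K_n)\geq 1$ for any non-torus knot (pretzel knots with $\gcd\neq 1$ are non-torus) combined with the strict-decrease Proposition~\ref{prop:BSdecrease}, which forces $b_g\geq n-g$ and hence $b_g\geq 1$ for all $g\leq n-2$. Thus the genus-$(n-1)$ embedding is both necessary and sufficient to locate the zero, and the two spectra differ precisely at the final nonzero term, exactly as Proposition~\ref{prop:bsVSpbs} predicts. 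Assembling these observations completes the proof with no substantial computation beyond the embedding picture.
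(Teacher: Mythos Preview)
Your approach is essentially the paper's own: invoke Theorem~\ref{thm:bsgmk} for the primitive spectrum, use Proposition~\ref{prop:bsVSpbs} to transfer to the ordinary spectrum, and plug in the genus-$(n-1)$ embedding of Figure~\ref{fig:pretzelOnSurface} to locate the zero. That is exactly what the paper does in the paragraph preceding the corollary.

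One small slip in your final paragraph: Proposition~\ref{prop:BSdecrease} (strict decrease) yields the \emph{upper} bound $b_g \le n-g$, not the lower bound $b_g \ge n-g$ that you claim. The clean way to see $b_g \neq 0$ for $g \le n-2$ is already contained in your earlier invocation of Proposition~\ref{prop:bsVSpbs}: that proposition says $b_g(K)=0$ forces $\hat b_g(K)\in\{0,1\}$, but you know $\hat b_g(K_n)=n-g\ge 2$ for $g\le n-2$, so $b_g(K_n)\neq 0$ and hence $b_g(K_n)=\hat b_g(K_n)=n-g$. No appeal to non-torus-ness or strict decrease is needed for this direction. (If you prefer a direct lower bound, the relevant input is the tunnel-number inequality of Proposition~\ref{prop:MSYsprop} together with $t(K_n)+1=n$, which gives $b_g \ge n-g$; this is the content of Proposition~\ref{prop:stairstep}.)
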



\begin{figure}\label{fig:pretz_embed}
\begin{tikzpicture}


\filldraw[fill=lightgray]
 (0,6) to [out=90,in=180] (2,8) -- (12,8) to [out=0, in=90] (14,6)--(14,2) to [out=270, in=0] (12,0) -- (2,0) to [out=180,in=270] (0,2) -- (0,6) --cycle
 
 \foreach \a in {2,6,10}{
(2+\a,5) to [out=90, in=0] (1+\a,6) to [out=180,in=90] (\a,5) -- (\a,3) to [out=270,in=180] (\a+1,2) to [out=0,in=270] (2+\a,3) -- (2+\a,5)--cycle};

\draw [thick] (2/3,2.5) to [out=270, in=180] (3,2/3) -- (11,2/3) to [out=0,in=270] (13+1/3,2.5);
\draw [thick] (2/3,5.5) to [out=90, in=180] (3,8-2/3) -- (11,8-2/3) to [out=0,in=90] (13+1/3,5.5);

\foreach \a in {0,4,8}{
\draw [thick] (4/3+\a,2.5) to [out=270, in=180] (3+\a,4/3) -- (3+\a,4/3) to [out=0,in=270] (5-1/3+\a,2.5);
\draw [thick] (4/3+\a,5.5) to [out=90, in=180] (3+\a,8-4/3) -- (3+\a,8-4/3) to [out=0,in=90] (5-1/3+\a,5.5);
}

\draw [thick] (4/3,2.5) to [out=90,in=190] (2,3);
\draw [thick, dashed] (2,3) to (0,13/3);
\draw [thick] (0,13/3) to [out=10,in=270] (4/3,5.5);

\draw [thick] (2/3,2.5) to [out=90,in=190] (2,11/3);
\draw [thick, dashed] (2,11/3) to (0,5);
\draw [thick] (0,5) to [out=10,in=270] (2/3,5.5);

\draw [thick] (16/3,2.5) to [out=90,in=-10] (4,11/3);
\draw [thick, dashed] (4,11/3) to  (6,13/3);
\draw [thick] (6,13/3) to [out=170,in=270] (14/3,5.5);

\draw [thick] (14/3,2.5) to [out=90,in=-10] (4,3);
\draw [thick, dashed] (4,3) to (6,4-1/3);
\draw [thick] (6,4-1/3) to [out=170,in=-10] (4,4+1/3);
\draw [thick,dashed] (4,4+1/3) to  (6,5);
\draw [thick] (6,5) to [out=170,in=270] (16/3,5.5);

\begin{scope}[xshift=4cm]
\draw [thick] (16/3,2.5) to [out=90,in=-10] (4,11/3);
\draw [thick, dashed] (4,11/3) to  (6,13/3);
\draw [thick] (6,13/3) to [out=170,in=270] (14/3,5.5);

\draw [thick] (14/3,2.5) to [out=90,in=-10] (4,3);
\draw [thick, dashed] (4,3) to (6,4-1/3);
\draw [thick] (6,4-1/3) to [out=170,in=-10] (4,4+1/3);
\draw [thick,dashed] (4,4+1/3) to  (6,5);
\draw [thick] (6,5) to [out=170,in=270] (16/3,5.5);
\end{scope}

\begin{scope}[xscale=-1,xshift=-18cm]
\draw [thick] (16/3,2.5) to [out=90,in=-10] (4,11/3);
\draw [thick, dashed] (4,11/3) to  (6,13/3);
\draw [thick] (6,13/3) to [out=170,in=270] (14/3,5.5);

\draw [thick] (14/3,2.5) to [out=90,in=-10] (4,3);
\draw [thick, dashed] (4,3) to (6,4-1/3);
\draw [thick] (6,4-1/3) to [out=170,in=-10] (4,4+1/3);
\draw [thick,dashed] (4,4+1/3) to  (6,5);
\draw [thick] (6,5) to [out=170,in=270] (16/3,5.5);
\end{scope}

\end{tikzpicture}

\caption{A pretzel knot $K_4(2,-3,-3,3)$ embedding on a genus three surface.}\label{fig:pretzelOnSurface}

\end{figure}
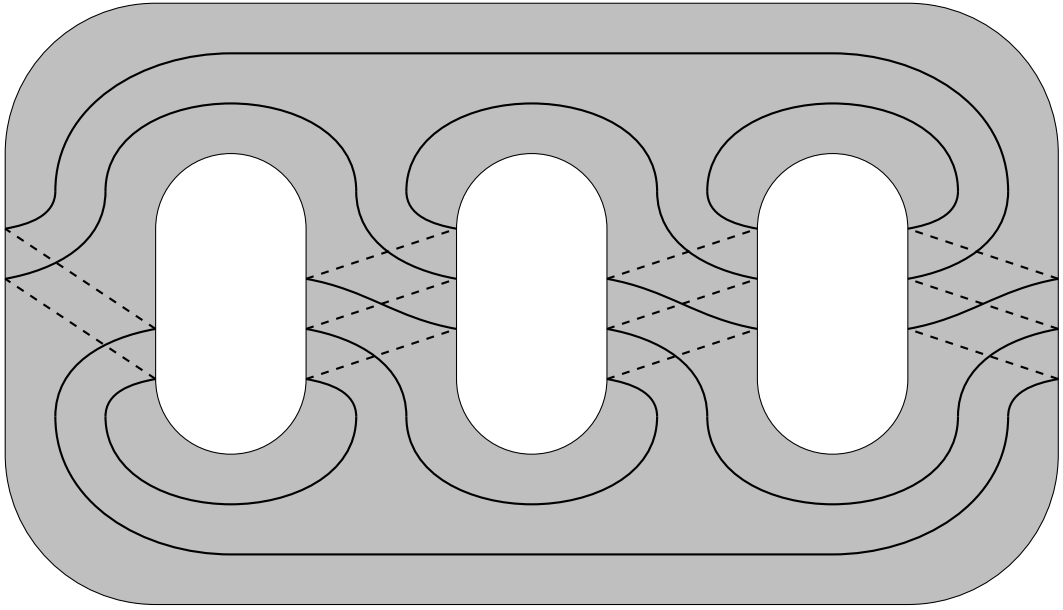


One should ask what happens when $\alpha=1$, to which we do not have a complete answer here. Morimoto, Sakuma, and Yokota in \cite{MSY} show that there are Montesinos knots $K$ which have $\alpha =1$ and $t(K)+2=b(K)$.  Here is their theorem as stated by Hirasawa and Murasugi in \cite{HM}:

\begin{theorem} \label{MSYtunnel}{{\cite{HM}}} The Montesinos knot $K=M(\frac{\beta_1}{\alpha_1},\frac{\beta_2}{\alpha_2},\ldots,\frac{\beta_r}{\alpha_r}\vert e)$ has tunnel number one if and only if one of the following conditions is satisfied.

\begin{enumerate}
\item $r=2.$
\item $r=3, \beta_2/\alpha_2 \equiv \beta_3/\alpha_3 \equiv \pm \frac{1}{3}$ in $\Q/ \Z$, and $e+\sum^3_{i=1}  \beta_i/\alpha_i = \pm 1/(3\alpha_1)$.
\item $r=3, \alpha_2$ and $\alpha_3$ are odd, and $\alpha_1=2$.

\end{enumerate}
\end{theorem}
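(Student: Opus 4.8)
The plan is to prove the two directions of the equivalence by quite different means: the ``if'' direction by explicitly constructing a single unknotting tunnel in each of the three families, and the ``only if'' direction by extracting an obstruction from the Seifert fibered structure of the knot exterior (equivalently its double branched cover) when $K$ lies outside the list. Since this is the classification of tunnel number one Montesinos knots, I would follow the overall strategy of Morimoto--Sakuma--Yokota \cite{MSY}, whose analysis underlies the statement as recorded by Hirasawa--Murasugi \cite{HM}.

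For sufficiency I would, in each case, exhibit an arc $\gamma$ and check that $E(K)\setminus\eta(\gamma)$ is a genus-two handlebody, i.e.\ $t(K)\le 1$. When $r=2$ the knot is in fact $2$-bridge: its double branched cover is a Seifert fibered space over $S^2$ with at most two exceptional fibers, hence a lens space, and lens-space double covers correspond to $2$-bridge knots, which are classically tunnel number one; so case (1) is immediate. For the two exceptional families (2) and (3) I would read a candidate tunnel directly off the standard $4$-plat/pillowcase diagram as an arc joining two adjacent rational tangles, and then verify that the complement collapses to a genus-two handlebody by a sequence of handle slides. The arithmetic hypotheses are precisely what make these slides succeed: the condition $\alpha_1=2$ with $\alpha_2,\alpha_3$ odd in (3), and the congruences $\beta_2/\alpha_2\equiv\beta_3/\alpha_3\equiv\pm\tfrac{1}{3}$ together with $e+\sum_{i=1}^3\beta_i/\alpha_i=\pm 1/(3\alpha_1)$ in (2), are exactly the tangle-compatibility conditions needed for the tunnel's complement to be handlebody.

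For necessity, suppose $t(K)=1$, so $E(K)$ carries a genus-two Heegaard splitting with surface $\Sigma$. The governing idea is that for $r\ge 3$ the Montesinos exterior is a graph manifold whose double branched cover is the Seifert fibered space over $S^2(\alpha_1,\dots,\alpha_r)$, and it contains essential tori (the lifts of the Conway spheres separating the tangles). I would isotope $\Sigma$ into controlled position with respect to this essential-torus structure and the Seifert fibration, and then invoke the structure theory of small-genus Heegaard splittings of Seifert fibered spaces. The genus-two bound first forces $r\le 3$, and a case analysis on the exceptional fibers of orders $\alpha_i$ then produces exactly the numerical constraints in (2) and (3), showing no other Montesinos knot can have a genus-two splitting of its exterior.

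The main obstacle is this last, case-analytic step in the necessity direction. Isolating the essential tori and putting $\Sigma$ in standard position is routine, but converting ``$E(K)$ has a genus-two Heegaard surface meeting the fibration and branch locus controlledly'' into the sharp congruences ($\beta_2/\alpha_2\equiv\beta_3/\alpha_3\equiv\pm\tfrac13$ and $e+\sum\beta_i/\alpha_i=\pm1/(3\alpha_1)$) demands a delicate count of how $\Sigma$ meets each singular fiber and a systematic elimination of every configuration except the three listed families. This is where essentially all the difficulty concentrates, and it is the part I would expect to occupy the bulk of the argument, carried out along the lines of \cite{MSY}.
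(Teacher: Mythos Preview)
The paper does not prove this theorem at all: it is quoted verbatim as a result of Morimoto--Sakuma--Yokota \cite{MSY} (in the formulation of Hirasawa--Murasugi \cite{HM}) and used as a black box to observe that certain Montesinos knots satisfy $t(K)+2=b(K)$. There is therefore no ``paper's own proof'' to compare your proposal against.

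That said, your outline is a reasonable high-level sketch of the strategy one would expect from \cite{MSY}: explicit tunnels for sufficiency, and an analysis of genus-two Heegaard splittings of the exterior against the Seifert/graph-manifold structure for necessity. You are right that the hard content is entirely in the necessity direction, and you correctly identify that the arithmetic constraints in (2) and (3) must emerge from a delicate case analysis of how a genus-two Heegaard surface can sit relative to the essential tori and singular fibers. What you have written, however, is a plan rather than a proof: the phrases ``verify that the complement collapses to a genus-two handlebody by a sequence of handle slides'' and ``a case analysis \dots\ produces exactly the numerical constraints'' are precisely where all the work lies, and none of that work is carried out here. If your intent was to reproduce the proof, you would need to actually execute both the tunnel constructions (with the handle-slide verifications) and the splitting classification; if your intent was merely to cite the result, then a one-line reference to \cite{MSY} suffices, which is what the paper does.
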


Note that any pretzel link $K_3(p_1,p_2,2)$ is a knot if and only if $p_1$ and $p_2$ are odd.   Every pretzel knot described in Theorem \ref{MSYtunnel} has tunnel number one but bridge number three.  Thus, we cannot use Proposition \ref{prop:stairstep} to compute the bridge spectrum. It is unknown if there is a pretzel knot or Montesinos knot which has bridge spectrum $(3,1,0)$.

Another point of interest is that any $n$-pretzel knot $K_n$ with $n\geq4$ is a distance one knot.

\begin{proposition} \label{prop:dist1}
If $K_n=K_n(p_1,\ldots,p_n)$ is a pretzel knot with $n\geq 4$,  and $P$ is a $(0,b_0(K))$-bridge surface for $K_n$, then $d(P,K_n)=1$.
\end{proposition}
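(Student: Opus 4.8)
The plan is to establish the two bounds $d(P,K_n)\ge 1$ and $d(P,K_n)\le 1$ separately, working entirely on the punctured sphere $P_K$. Recall that $b_0(K_n)=n$ (Boileau--Zieschang, \cite{BZ}), so $P$ meets $K_n$ in $2n$ points and $P_K$ is a $2n$-punctured sphere. Writing the splitting as $(S^3,K_n)=(V,A_V)\cup_P(W,A_W)$, each of $A_V,A_W$ is a trivial $n$-tangle, and its collection of bridge disks determines a non-crossing perfect matching of the $2n$ punctures, which I denote $M_V$ and $M_W$; because $K_n$ is a knot, $M_V\cup M_W$ is a single $2n$-cycle. For each pair $e\in M_V$ the frontier in $V_K$ of a regular neighborhood of the corresponding bridge disk is a compressing disk whose boundary $v_e\subset P_K$ encircles exactly the two punctures of $e$. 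Since $n\ge 2$ this $v_e$ is essential, so $v_e\in\mathcal V$; symmetrically each $f\in M_W$ gives $w_f\in\mathcal W$.

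For the upper bound I would exhibit one curve from $\mathcal V$ and one from $\mathcal W$ that are disjoint and non-isotopic. In the standard pretzel bridge position the matching $M_V$ consists of the top-closure arcs joining consecutive twist regions, so every pair of $M_V$ joins two cyclically adjacent punctures; the matching $M_W$, coming from the lower twist regions together with the bottom closures, is again non-crossing and hence also contains cyclically adjacent pairs. Two curves $v_e,w_f$ coming from \emph{adjacent} pairs $e\in M_V$ and $f\in M_W$ with $e\cap f=\emptyset$ automatically have non-interleaving endpoints, so they may be realized as disjoint small circles about disjoint pairs of punctures; they are essential and non-isotopic, hence span an edge of $\mathcal C(P_K)$ and give $d(\mathcal V,\mathcal W)\le 1$. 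This is exactly where $n\ge 4$ enters: with $2n\ge 8$ punctures there is room to select an adjacent pair of $M_V$ together with a disjoint adjacent pair of $M_W$, while for smaller $n$ the pretzel may instead be a torus or $2$-bridge knot and the count $b_0=n$ already fails.

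For the lower bound I would show $\mathcal V\cap\mathcal W=\emptyset$. If $c\in\mathcal V\cap\mathcal W$, then $c$ bounds disks $D_V\subset V_K$ and $D_W\subset W_K$, and $S=D_V\cup_c D_W$ is a $2$-sphere in $S^3$ disjoint from $K_n$, meeting $P$ only in $c$. By the Schoenflies theorem $S$ bounds balls on both sides; since $K_n$ is connected it lies in one of them, so the complementary disk of $c$ in $P$ on the other side contains no punctures. This contradicts the essentiality of $c$ on $P_K$. Hence $d(P,K_n)\ge 1$, and combined with the previous paragraph this yields $d(P,K_n)=1$.

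The main obstacle is the upper bound, and within it the bookkeeping for $M_W$: tracing an arc of $A_W$ down through the lower half of a twist region and across a bottom closure depends on the parities of the $p_i$, so I must verify across these cases that $M_W$ really does contain a pair adjacent to two punctures disjoint from a chosen pair of $M_V$. A secondary obstacle is that the proposition is phrased for an arbitrary $(0,b_0(K_n))$-bridge surface $P$; to cover every such $P$ I would either invoke uniqueness of minimal bridge position for these pretzel knots, or prove the purely combinatorial statement that any two non-crossing matchings on $2n\ge 8$ points whose union is a single cycle must contain disjoint adjacent pairs, so that the construction of $v_e$ and $w_f$ succeeds regardless of which minimal bridge sphere is chosen.
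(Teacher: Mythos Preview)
Your approach is essentially the paper's: exhibit disjoint compressing disks for $P_K$ on opposite sides arising from bridge disks to get $d\le 1$, and rule out $d=0$ by observing that a common compressing curve would yield a separating $2$-sphere missing the connected knot. The paper's upper-bound argument is a bit more geometric and thereby sidesteps your parity bookkeeping: with $P$ taken to be the plane $z=0$ slicing through all $n$ twist regions, it chooses a bridge disk in the upper ball lying between tangles $1$ and $2$ and a bridge disk in the lower ball lying between tangles $3$ and $4$; since $n\ge 4$ these live in disjoint vertical slabs, so the boundaries of their thickenings are disjoint curves on $P_K$ without ever needing to name which punctures they encircle. You can adopt the same shortcut and drop the analysis of $M_W$.

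Your secondary obstacle is genuine, and the paper does not address it either: its proof fixes the specific plane $P=\{z=0\}$ from the outset. In fact the version of the proposition quoted in the introduction states only that \emph{there exists} a $(0,n)$-bridge surface of distance $1$, and that existential statement is all that is used to obtain Corollary~\ref{cor:SSbsNotHdist}. So for the intended application you may safely work with the standard bridge sphere and not worry about uniqueness of minimal bridge position.
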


\begin{proof} Let $K_n$ be a pretzel knot with $n\geq 4$.  Arrange $K_n$ in $S^3$ so that the rational tangles are all intersecting the plane in $S^3$ where $z=0$.  Let $P$ be the plane where $z=0$.  Choose one disk between the first two tangles on one side of $P$ and another on the other side between the third and fourth tangle, as in Figure~\ref{fig:pretzeldisks}.  These two disks are bridge disks for the knot.  If we thicken these disks by taking an $\epsilon$ neighborhood of each, and then consider the boundary of these neighborhoods, we obtain disks which bound essential simple closed curves in $P$. Since they are disjoint curves, the vertices they correspond to in the curve complex are adjacent. Hence, $d(P,K_n)\leq 1$.  If the distance is zero, we would have disks on opposite sides of $P$ which bound essential simple closed curves that are isotopic to each other, so we can isotope our disks to have the same boundary, and form a sphere.  This implies that our knot has at least two components, one on each side of the sphere, contradicting our assumption that $K_n$ is a knot. Hence, $d(P,K_n)=1$.\end{proof}

\begin{figure}
\begin{center}
\includegraphics[width=6in]{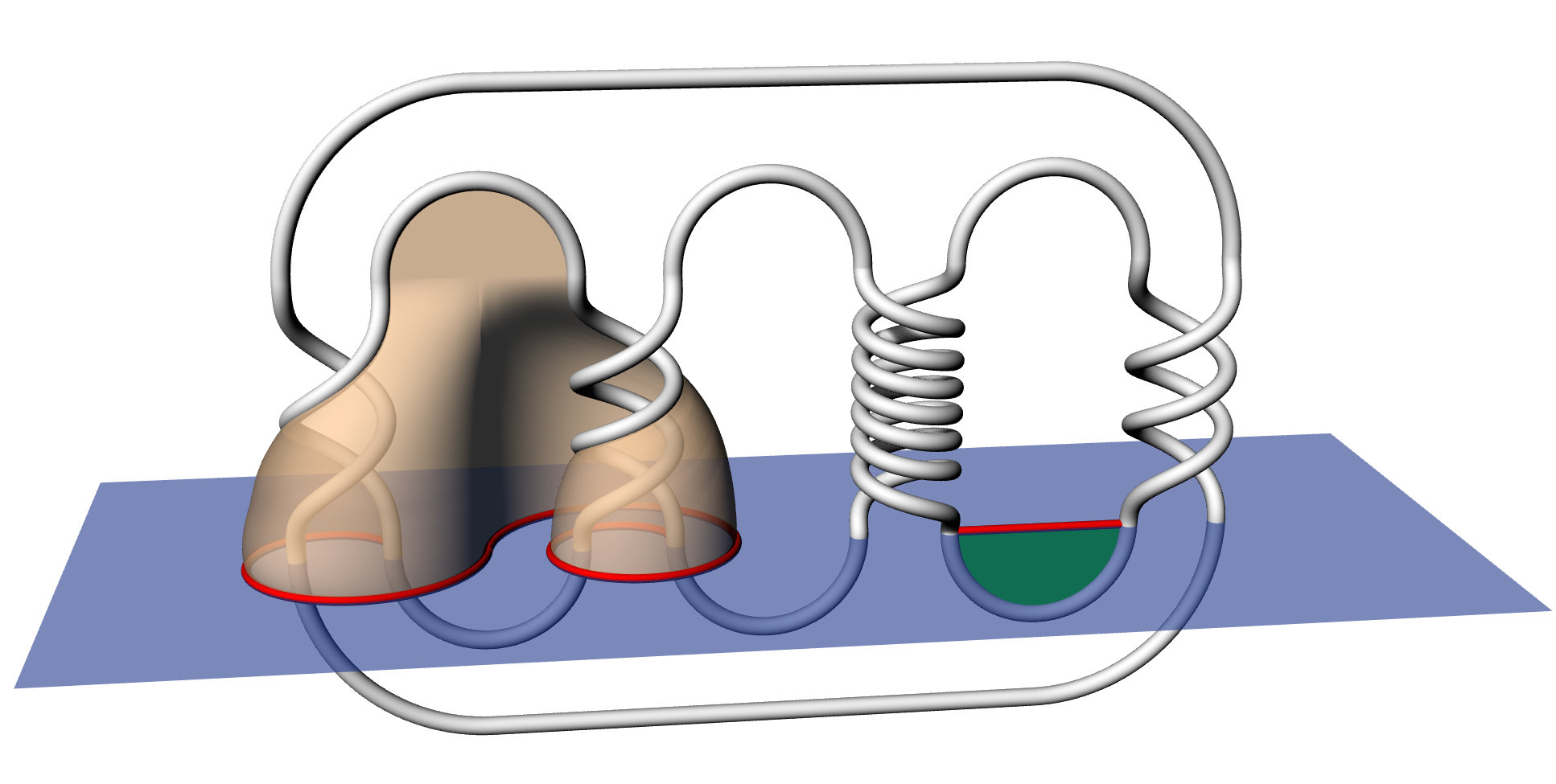}
\caption{A four branched pretzel knot with disjoint bridge disks on opposite sides of the bridge surface.}\label{fig:pretzeldisks}
\end{center}
\end{figure}

This gives us that any pretzel knot $K_n=K_n(p_1,\ldots,p_n)$ with $\gcd\{p_1,\ldots,p_n\}\not = 1$ and $n\geq4$ has a stair-step bridge spectrum and is not high distance.  Thus, one could not find the bridge spectrum of $K_n$ with Theorem \ref{thm:dist}.  This proves the next corollary.

\begin{corollary}\label{cor:SSbsNotHdist}
There exists knots with distance 1 and stair-step bridge spectrum. 
\end{corollary}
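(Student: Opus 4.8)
The statement to prove, Corollary~\ref{cor:SSbsNotHdist}, asserts the mere existence of a knot that simultaneously has distance $1$ (for some minimal genus-zero bridge surface) and a stair-step bridge spectrum. This is a combination corollary: everything needed has already been established earlier in the excerpt, so the plan is simply to exhibit an explicit witness and invoke the two preceding results.

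The plan is to pick a pretzel knot $K_n = K_n(p_1,\ldots,p_n)$ with $n \geq 4$ and $\gcd(p_1,\ldots,p_n) \neq 1$, for instance $K_4(2,2,2,2)$ or more safely a genuine knot such as $K_4(3,3,3,3)$ (one must check the parameters actually yield a knot rather than a multi-component link, but any choice with all $p_i$ odd and $n$ even, or the standard admissible parities, suffices). First I would record that, by Corollary~\ref{cor:bsp}, since $\gcd(p_1,\ldots,p_n) \neq 1$, this knot has stair-step (primitive) bridge spectrum ${\mathbf{\hat b}}(K_n) = (n,n-1,\ldots,2,1,0)$, and hence in particular a stair-step bridge spectrum in the sense required. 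Then I would invoke Proposition~\ref{prop:dist1}: since $n \geq 4$, any $(0,b_0(K_n))$-bridge surface $P$ for $K_n$ satisfies $d(P,K_n) = 1$. These two facts, applied to the same knot, immediately give a knot that is stair-step yet has a distance-$1$ minimal bridge sphere, which is exactly the assertion.

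Concretely, the argument is the one flagged in the sentence immediately preceding the corollary in the excerpt: any pretzel knot $K_n(p_1,\ldots,p_n)$ with $\gcd\{p_1,\ldots,p_n\} \neq 1$ and $n \geq 4$ has both properties, so such a knot is a witness. The corollary is therefore an existence statement packaged from Corollary~\ref{cor:bsp} and Proposition~\ref{prop:dist1}, and no new topology is required.

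There is essentially no obstacle here, since both ingredients are already proved; the only point demanding a moment's care is the bookkeeping that the \emph{same} knot can be arranged to satisfy the gcd hypothesis (for the stair-step conclusion) while having $n \geq 4$ (for the distance conclusion), which is trivially compatible — one just needs $n \geq 4$ and a common factor among the $p_i$, e.g.\ all $p_i$ equal to a fixed integer $\geq 2$ of the correct parity to form a knot. I would write the proof in a single short paragraph exhibiting the family and citing the two results.

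\begin{proof}
Let $K_n = K_n(p_1,\ldots,p_n)$ be any pretzel knot with $n \geq 4$ and $\gcd(p_1,\ldots,p_n) \neq 1$; such knots exist, for example taking all $p_i$ equal to a common value so that the parameters yield a knot. By Corollary~\ref{cor:bsp}, the hypothesis $\gcd(p_1,\ldots,p_n) \neq 1$ forces the primitive bridge spectrum of $K_n$ to be stair-step, ${\mathbf {\hat b}}(K_n) = (n,n-1,\ldots,2,1,0)$, and correspondingly ${\mathbf b}(K_n) = (n,n-1,\ldots,3,2,0)$; in either case $K_n$ has stair-step bridge spectrum. On the other hand, since $n \geq 4$, Proposition~\ref{prop:dist1} shows that any $(0,b_0(K_n))$-bridge surface $P$ for $K_n$ satisfies $d(P,K_n) = 1$. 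Thus $K_n$ is a knot with stair-step bridge spectrum whose minimal genus-zero bridge surface has distance $1$, as required.
\end{proof}
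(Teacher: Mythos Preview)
Your proposal is correct and matches the paper's own argument essentially verbatim: the paper derives the corollary directly from the sentence preceding it, combining Corollary~\ref{cor:bsp} (stair-step spectrum when $\gcd\{p_i\}\neq 1$) with Proposition~\ref{prop:dist1} (distance $1$ when $n\geq 4$) for the same pretzel knot. No additional ideas are needed, and your care about choosing parameters that actually yield a knot is the only small point not made explicit in the paper.
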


Thus, stair-step does not imply high distance.  As an example, $K_n(3p_1,3p_2,\ldots,3p_n)$ for $n\geq 4$ for any pretzel knot $K_n(p_1, \ldots,p_n)$ has ${\mathbf b}(K_n)=(n,n-1,\ldots,1,0)$ and distance one.


\section{Bridge spectra of cables of 2-bridge knots}\label{sec:proof}

In this section we prove Theorem \ref{thm:BSofC2b}.  To do this, we compute the genus zero, one, and two bridge numbers of cables of 2-bridge knots.  The majority of the work here is establishing a lower bound for the genus one bridge number, which is done is subsection \ref{sec:genusone}.

\subsection{Genus zero and genus two bridge numbers}\label{sec:zeroandtwo}

From Schubert's and Schulten's result on bridge number of satellite knots, Theorem  \ref{thm:bridgesat}, we have the following.

\begin{lemma}\label{lem:genus0}
Let $K_{p/q}$ be a 2-bridge knot, $T_{m,n}$ a torus knot, and $K={\mathsf {cable}}(T_{m,n}, K_{p/q})$, then $b(K)=b_0(K)=2m$.
\end{lemma}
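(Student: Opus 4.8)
The plan is to compute $b_0(K)$ by proving matching lower and upper bounds, both equal to $2m$. The lower bound comes essentially for free from Schubert's satellite inequality, Theorem \ref{thm:bridgesat}. The cable $K = \mathsf{cable}(T_{m,n}, K_{p/q})$ is a satellite knot whose companion is $J = K_{p/q}$ and whose pattern is the torus knot $T_{m,n}$ sitting inside the solid torus $V$. The relevant index of this pattern is the winding number / wrapping number in $V$, which for the $(m,n)$-torus knot pushed off the boundary torus is $m$ (the number of times it meets a meridian disk of $V$). Thus the pattern has index $n = m$ in the language of Theorem \ref{thm:bridgesat}, giving $b_0(K) \geq m \cdot b_0(K_{p/q})$. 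Since $K_{p/q}$ is a (non-torus) 2-bridge knot, $b_0(K_{p/q}) = 2$ by Proposition \ref{prop:BSof2b}, so $b_0(K) \geq 2m$.

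For the upper bound, I would exhibit an explicit $(0, 2m)$-bridge position for $K$, i.e. a diagram of the cable with exactly $2m$ maxima. The idea is to start from a minimal bridge presentation of the companion $K_{p/q}$, which has $2$ maxima (bridge number $2$), and then thicken it into the cabling annulus. Concretely, take the standard $2$-bridge ``pillowcase'' or $4$-plat diagram of $K_{p/q}$ realizing $b_0(K_{p/q}) = 2$; the solid torus neighborhood $V$ of $K_{p/q}$ inherits a natural product structure in which the $(m,n)$-torus knot pattern runs $m$ times longitudinally. Replacing each of the two bridge arcs (maxima) of $K_{p/q}$ by the $m$ parallel strands of the cable, each maximum of the companion becomes $m$ maxima of $K$, for a total of $2 \cdot m = 2m$ maxima. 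This produces a $(0, 2m)$-splitting, giving $b_0(K) \leq 2m$. Combining the two bounds yields $b(K) = b_0(K) = 2m$.

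The main obstacle I anticipate is making the upper-bound construction rigorous: one must check that the $m$ parallel copies running along the companion can genuinely be arranged so that the pattern's internal twisting (the longitudinal winding governed by $n$ relative to $m$) does not force additional maxima. The longitudinal twisting of $T_{m,n}$ inside $V$ can be absorbed into regions where the strands run monotonically (near the minima and along the ``sides''), so that the number of maxima is controlled purely by the $m$-fold parallel copies of the two maxima of $K_{p/q}$. I would argue that the $n$-twisting occurs in a ball where the strands can be kept transverse to the height function without creating new critical points, so the count of $2m$ maxima is exactly preserved. This is the only delicate point; the lower bound and the arithmetic $2 \cdot m = 2m$ are immediate once the index of the pattern is correctly identified as $m$.
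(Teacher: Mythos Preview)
Your proposal is correct and follows essentially the same approach as the paper: Schubert's satellite inequality (Theorem~\ref{thm:bridgesat}) for the lower bound, and an explicit $2m$-maxima cabled position built from a minimal $2$-bridge presentation of $K_{p/q}$ for the upper bound. Your treatment is in fact more detailed than the paper's, which dispatches the upper bound in a single sentence by pointing to a figure; your discussion of why the $n$-twisting can be absorbed without creating extra maxima makes explicit what the paper leaves to the reader.
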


\begin{proof}
From Theorem \ref{thm:bridgesat}, we have $b_0(K)\geq 2m$.  To prove the upper bound, consider Figure \ref{fig:figure8Asurface}. If we were to cable $K_{p/q}$, we would produce a knot with $2m$ maxima, which gives us $b_0(K)\leq 2m$.\end{proof}

A similar lemma is required for the genus two bridge number.  We hope to relay some intuition by noticing that any cable of a two bridge knot embeds on a genus two surface.  

\begin{figure}[htbp]
\begin{center}
\includegraphics[height=2.2in]{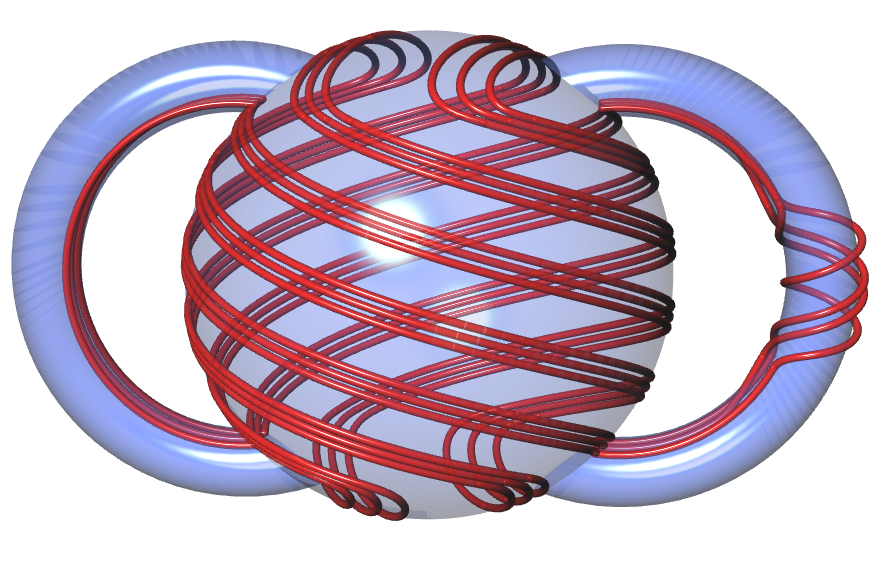}

\end{center}\caption{A cable of a 2-bridge knot embedded on a genus two surface}\label{fig:genus2}
\end{figure}

\begin{lemma}\label{lem:genus2}
Let $K$ be a cable of a non-torus 2-bridge knot by $T_{m,n}$. Then $b_2(K)=0$.
\end{lemma}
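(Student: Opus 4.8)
The plan is to realize $K$ as a knot lying on a genus two Heegaard surface of $S^3$, which by the definition of genus $g$ bridge number immediately yields a $(2,0)$-splitting and hence $b_2(K)=0$. The mechanism for passing from an embedding of the companion $K_{p/q}$ on a Heegaard surface to an embedding of the cable $K$ is exactly Lemma \ref{lem:Zlem6.1}: if $\Sigma\subset S^3$ is a Heegaard surface with $K_{p/q}\subset\Sigma$ and there is a compressing disk $D$ for $\Sigma$ with $\vert D\cap K_{p/q}\vert=1$, then $K={\mathsf {cable}}(T_{m,n},K_{p/q})$ can be embedded in $\Sigma$ as well. So the whole problem reduces to producing a genus two Heegaard surface containing $K_{p/q}$ together with such a once-intersecting compressing disk.

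To build $\Sigma$, I would start from a minimal bridge splitting of the $2$-bridge knot. By Proposition \ref{prop:BSof2b} a non-torus $2$-bridge knot has $b_0(K_{p/q})=2$, so $(S^3,K_{p/q})$ has a $(0,2)$-splitting $(V_1,A_1)\cup_\Sigma(V_2,A_2)$ with each $A_i$ a pair of trivial arcs. Performing meridional stabilization on each of the two arcs of $A_1$ (as on page \pageref{def:MeridionalStabilize}) raises the genus twice and lowers the arc count to zero, producing a $(2,0)$-splitting; this is consistent with $b_2(K_{p/q})=0$ coming from the bridge spectrum $(2,1,0)$ of Proposition \ref{prop:BSof2b}. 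In the resulting $(2,0)$-splitting $K_{p/q}$ lies on the genus two surface $\Sigma$, and each of the two handles created by the stabilizations is traversed exactly once by $K_{p/q}$. Hence the meridian disk $D$ of such a handle is a compressing disk for $\Sigma$ meeting $K_{p/q}$ in a single point, i.e. $\vert D\cap K_{p/q}\vert=1$ (compare Figure \ref{fig:genus2}).

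With $\Sigma$ and $D$ in hand, applying Lemma \ref{lem:Zlem6.1} with $M=S^3$ and $J=K_{p/q}$ yields an embedding of $K$ with $K\subset\Sigma$. Since $\Sigma$ is a genus two Heegaard surface of $S^3$, this is precisely a $(2,0)$-splitting of $(S^3,K)$, so $b_2(K)=0$. The step I expect to require the most care is the verification of the compressing-disk condition, namely that $K_{p/q}$ can be placed on a genus two Heegaard surface so that some meridian disk meets it exactly once; once this is established the cabling lemma does the remaining work. Everything else is bookkeeping with the meridional stabilization construction and the definition of $b_g$.
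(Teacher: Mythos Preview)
Your proposal is correct and follows essentially the same approach as the paper: both place $K_{p/q}$ on a genus two Heegaard surface with a meridian disk meeting it once, then invoke Lemma~\ref{lem:Zlem6.1} to push the cable onto the same surface. The only difference is presentational: the paper appeals to Proposition~\ref{prop:BSof2b} and a figure for the once-intersecting disk, while you make the construction explicit via two meridional stabilizations of the $(0,2)$-splitting, which is exactly what underlies the paper's picture.
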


\begin{proof}
Recall Proposition \ref{prop:BSof2b}, which tells us that 2-bridge knots embed on a genus two surface.  From Figure \ref{fig:2-bridge} we see that this embedding can have a single arc of the knot on a handle, i.e. there is a meridian disk which intersects the knot once.  Thus, the remark following Lemma \ref{lem:Zlem6.1} tells us that the cable can embed in a genus two surface. See Figure \ref{fig:genus2}.\end{proof}

\subsection{Genus one bridge number}\label{sec:genusone}

\begin{theorem} \label{thm:BSofC2b}

Let $K_{p/q}$ be a non-torus 2-bridge knot and $T_{m,n}$ an $(m,n)$-torus knot. If $K:={\mathsf {cable}}(T_{m,n}, K_{p/q})$ is a cable of  $K_{p/q}$ by $T_{m,n}$, then the bridge spectrum of $K$ is ${\mathbf b}(K)=(2m,m,0)$.
\end{theorem}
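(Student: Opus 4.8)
The plan is to pin down $b_1(K)$, since Lemma~\ref{lem:genus0} already gives $b_0(K)=2m$ and Lemma~\ref{lem:genus2} gives $b_2(K)=0$; by Proposition~\ref{prop:BSdecrease} the spectrum then has the shape $(2m,b_1(K),0)$ with $0<b_1(K)<2m$, so the whole theorem reduces to establishing $b_1(K)=m$.

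For the upper bound $b_1(K)\le m$ I would exhibit an explicit $(1,m)$-splitting. Since $K_{p/q}$ is a non-torus $2$-bridge knot, Proposition~\ref{prop:BSof2b} gives $\mathbf{b}(K_{p/q})=(2,1,0)$, so $K_{p/q}$ admits a $(1,1)$-position $S^3=V_1\cup_{\Sigma}V_2$ with one trivial arc on each side. The cabling pattern $T_{m,n}$ has index $m$ (as witnessed by $b_0(K)=m\cdot b_0(K_{p/q})$), so pushing $T_{m,n}$ into a neighborhood of $K_{p/q}$ compatibly with this splitting replaces the single bridge on each side by $m$ parallel trivial arcs, producing a genus-one bridge surface meeting $K$ in $m$ trivial arcs per side. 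This yields $b_1(K)\le m$.

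The real content, and the main obstacle, is the lower bound $b_1(K)\ge m$. Let $\Sigma$ be a genus-one bridge surface realizing $b=b_1(K)$, so that $\chi(\Sigma_K)=-2b$. Two structural facts organize the argument. First, $E(K)=E(K_{p/q})\cup_T C_{m,n}$, where $T=\partial N(K_{p/q})$ is the essential companion torus, $C_{m,n}$ is the cable space, $\partial_+C_{m,n}=T$, and $\partial_-C_{m,n}=\partial E(K)$. Second, $K$ is meridionally small by Lemma~\ref{lem:2bridge are msmall} together with Theorem~\ref{thm:cableMeridianSmall}, so $E(K)$ carries no essential surface with meridional boundary. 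I would then split according to whether $\Sigma$ is strongly irreducible or weakly reducible in $E(K)$, the dichotomy set up for Theorem~\ref{thm:HS}.

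In the weakly reducible case, apply Theorem~\ref{thm:HS} to replace $\Sigma$ by a strongly irreducible multiple bridge splitting whose thin surfaces are essential in $E(K)$; by Lemma~\ref{lem:Z3.5} each thin surface is either isotopic to $T$ or meets $\partial E(K)$ with integral slope, and meridional smallness rules out the remaining possibilities, while the genus formula of Theorem~\ref{thm:HS} controls how $\chi(\Sigma_K)$ is distributed among the pieces. In the strongly irreducible case, apply Lemma~\ref{Z5.2} with $S=T$ to make $\Sigma_K$ transverse (or almost transverse) to $T$ with each component $Q$-essential, save possibly one $Q$-strongly irreducible component. In either case the pieces lying in $C_{m,n}$ are essential surfaces whose boundary on $\partial_-C_{m,n}$ consists of meridians of $K$; Lemmas~\ref{lem:Z3.2} and \ref{lem:Z3.3} constrain their slopes on $T$, and the Seifert-fibered structure of $C_{m,n}$ forces these pieces to be horizontal rather than vertical (meridians of $K$ are not fibers). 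The crux is the resulting quantitative count: because the pattern wraps $m$ times around $N(K_{p/q})$, a horizontal essential surface in $C_{m,n}$ with meridional boundary must cover the base orbifold with enough degree that $\chi(\Sigma_K)\le -2m$, forcing $b\ge m$. Making this Euler-characteristic and intersection-number estimate precise, and verifying that it survives the almost-transverse saddle of Lemma~\ref{Z5.2} and the thin-surface bookkeeping of Theorem~\ref{thm:HS}, is where the genuine difficulty lies. Combining $b_1(K)=m$ with the already-established $b_0(K)=2m$ and $b_2(K)=0$ then gives $\mathbf{b}(K)=(2m,m,0)$.
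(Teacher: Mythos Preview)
Your overall architecture matches the paper's: reduce to the lower bound $b_1(K)\ge m$, split into the strongly irreducible and weakly reducible cases, and in the former apply Lemma~\ref{Z5.2} with $S=T$ while in the latter apply Theorem~\ref{thm:HS}. Where your sketch diverges from the paper, and where it has a real gap, is in how the subcases actually resolve.

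In the strongly irreducible case you assert that ``the pieces lying in $C_{m,n}$ are essential surfaces'' and then try to extract $\chi(\Sigma_K)\le -2m$ from horizontality. But Lemma~\ref{Z5.2} allows one component of $\Sigma_K\setminus\eta(T)$ to be merely $Q$-strongly irreducible, and that component may well be the one in $C_{m,n}$; you cannot then invoke the Seifert-fibered classification on it. The paper handles this subcase by looking at the \emph{other} side: the piece $\Sigma_K\cap E(K_{p/q})$ is then $\partial N(K)$-essential, so by Theorem~\ref{thm:HTthm} it is some $S_n(n_1,\dots,n_{k-1})$ with strictly negative Euler characteristic (Lemma~\ref{lem:eulerneg}). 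Since $\chi(\Sigma)=0$, the $C_{m,n}$ pieces would have to be disks with essential boundary on $T$, hence meridian disks, contradicting the integral boundary slope of $S_n$. Your Euler-characteristic framing does not see this obstruction because you never analyze the essential piece in $E(K_{p/q})$.

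Conversely, in the subcase where all components (or at least those in $C_{m,n}$) are essential, the paper does not derive the inequality $b\ge m$ from a horizontal-surface covering estimate at all. Instead it observes directly that essential surfaces in $C_{m,n}$ meeting $\partial_-C_{m,n}$ in meridians are meridian disks of the solid torus $V\supset T_{m,n}$, each hitting $K$ in exactly $m$ points; hence $|\Sigma\cap K|=m\cdot|\Sigma\cap T|$. When the strongly irreducible piece lies in $E(K_{p/q})$, this makes $\Sigma$ a $(1,b')$-bridge surface for $K_{p/q}$ with $|\Sigma\cap K|=2mb'$ and $b'\ge b_1(K_{p/q})=1$, giving $b_1(K)\ge m$. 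When \emph{every} piece is essential, Lemma~\ref{lem:Z3.3} forces $\Sigma\cap T$ to be meridional, so $\Sigma\cap E(K_{p/q})$ would be an essential surface with meridional boundary in a $2$-bridge complement, contradicting Theorem~\ref{thm:HTthm}; this subcase simply does not occur.

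Finally, in the weakly reducible case the paper does not extract a bound on $b$ from the genus formula; it derives a contradiction. Meridional smallness plus Lemma~\ref{lem:Z3.5} force every thin surface to be isotopic to $T$, so one may take $d=1$. The thick surface in $E(K_{p/q})$ is a genuine Heegaard surface of genus at least $t(K_{p/q})+1=2$, and with $g(S_1)=1$ the formula $1=g(\Sigma_1)+g(\Sigma_0)-1$ forces $g(\Sigma_0)=0$, impossible since $C_{m,n}$ has no genus-zero Heegaard surface. So this case never arises either. Your sketch gestures at ``the genus formula controls how $\chi(\Sigma_K)$ is distributed'' but does not isolate this contradiction.
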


\begin{proof}  
The previous two lemmas give us all but $b_1(K)$.   We observe from Figure \ref{fig:genus2} that eliminating either handle will produce an embedding with $m$ trivial arcs on either side of the torus.  Hence, $b_1(K)\leq m$.  For the rest of the proof, we need to show that $b_1(K)\geq m$.   In subsection \ref{sec:Operations}, we note that every bridge surface is either strongly irreducible or weakly reducible. This dichotomy gives us two main cases for the proof of the theorem.
    \begin{pcases}
      \case
{\em Suppose that $\Sigma$ is a strongly irreducible genus 1 bridge surface for $K$.}   Let $J$, also denote the knot $K$, let $\Sigma$ be the bridge surface, and let $S$ denote $T:=\partial E(K_{p/q})$.  With these choices, apply Lemma \ref{Z5.2} and get one of three situations, giving us the following subcases.\vspace{3mm}

\noindent {\em Subcase A: After isotopy, $\Sigma_K$ is transverse to $T$ and each component of $\Sigma_K \setminus \eta(T)$ is $\partial N(K)$-essential in $E(K) \setminus \eta(T)$.}  
\vspace{3mm}

The exterior $E(K)$ is split along $T$ into $E(K_{p/q})$ and $C_{m,n}$ and $\Sigma_K$ is $\partial N(K)$-essential in both. The cable space $C_{m,n}$ can be decomposed into $(T^2\times I)\cup V$, where $V=D^2\times S^1$, a solid torus.  By assumption, the bridge surface $\Sigma$ is transverse to K. 

{\em Claim:} The surface $\Sigma \cap V$ is a collection of meridian disks.  

To prove the claim, notice that $\Sigma \cap V$ is also essential in $V$ and cannot be a $\partial$-parallel annulus or $\partial$-parallel disk, as these surfaces are disjoint from $K$ or are $\partial N(K)$-$\partial$-compressible.  The only option left is that each component of $\Sigma \cap V$ is a meridian disk, proving the claim.  

\vspace{3mm}
The Claim gives us that $\p_-C_{m,n}\cap \Sigma$ is meridional, thus, by Lemma \ref{lem:Z3.3} we know that $\Sigma \cap T$ has meridian slope also.  This is also the boundary slope of $\Sigma \cap E(K_{p/q})$.  But by Theorem \ref{thm:HTthm}, no essential surface has a boundary slope of $1/0$, giving us a contradiction and completing this subcase.

\vspace{9mm}

\noindent {\em Subcase B: After isotopy, $\Sigma_K$ is transverse to $T$, one component of $\Sigma_K \setminus \eta(T)$ is $\partial N(K)$-strongly irreducible and all other components are $\partial N(K)$-essential in $E(K) \setminus \eta(T)$.}  Note that $\Sigma_K \setminus \eta(T)=[ \Sigma_K\cap E(K_{p/q}) ]\cup [\Sigma_K\cap C_{m,n}]$.

First assume that $\Sigma_K\cap C_{m,n}$ is $\partial N(K)$-strongly irreducible.  This means that $\Sigma_K\cap E(K_{p/q})$ is $\partial N(K)$-essential.  Again by Theorem \ref{thm:HTthm}, we know that $\Sigma_K\cap E(K_{p/q})$ is isotopic to an essential surface $S_n=S_n(n_1,n_2,\ldots,n_{k-1})$ and has negative Euler characteristic by Lemma \ref{lem:eulerneg}.  The bridge surface $\Sigma$ is a torus by assumption, and thus must have Euler characteristic zero.  We can only increase the Euler characteristic of $S_n$ by gluing on disks. Assume that the number of components of $\Sigma \cap T$ is minimal.  It must be that $\Sigma \cap T$ is a collection of essential, simple closed curves in $T$.  These curves must be the boundaries of disks in order for us to increase the Euler characteristic, but the only disks that have essential curves in $T$ as their boundary are meridian disks, which have boundary slope 1/0.  This gives a contradiction of Theorem \ref{thm:HTthm}.\vspace{3mm}

Now, assume that $ \Sigma_K\cap E(K_{p/q})$ is $\partial N(K)$-strongly irreducible and  $\Sigma_K\cap C_{m,n}$ is $\partial N(K)$-essential. This implies that $\Sigma_K\cap C_{m,n}$ is a collection of meridian disks, and a single merdian disk in $C_{m,n}$ will intersect $K$, or $\p_-C_{m,n}$, exactly $m$ times. Thus $|\Sigma \cap K|=m\cdot |\Sigma \cap \partial E(K_{p/q})|=m\cdot b'$, where $b':=|\Sigma \cap \partial E(K_{p/q})|$ and $\Sigma \cap \partial E(K_{p/q})$ is a $(1,b')$-bridge surface for $K_{p/q}$. Since $b_1(K_{p/q})=1$, then $b'\geq 1$ therefore $b_1(K)\geq m$, completing this subcase.\vspace{3mm}

\noindent {\em Subcase C: After isotopy, $\Sigma_K$ is almost transverse to $T$, and each component of $\Sigma_K \setminus \eta(T)$ is $\p N(K)$-essential in $M(J)\setminus \eta(T)$.} Any saddlepoint tangency in a genus one torus will produce a figure 8 shaped intersubsection, $c$. When cut along an open neighborhood of $c$, the torus splits into two disjoint components, one an annulus and the other a disk or a single disk.  In either case, the disk will be inessential, which gives a contradiction to $\Sigma_K \setminus \eta(T)$ being $\p N(K)$-essential, completing this Subcase and Case 1.

\vspace{5mm}

      \case[]      
{\em Suppose that $\Sigma$ is weakly reducible.}   We apply Theorem \ref{thm:HS}, which gives us that there exists a multiple bridge splitting $\{ \Sigma_0,S_1,\Sigma_1,\ldots,S_d,\Sigma_d\}$ of $(S^3,K)$, that is strongly irreducible and 

\begin{equation}
g(\Sigma)=\sum_{i=0}^d g(\Sigma_i) -\sum_{i=1}^d g(S_i).\tag{$\star$}
\end{equation}

Since $\Sigma$ is a genus one bridge surface, this sum must equal one. By Theorem \ref{thm:cableMeridianSmall}, $K$ is meridionally small since $K$ is the cable of a meridionally small knot, see Definition \ref{def:meridianallySmall}. Hence $E(K)$ contains no essential meridional surfaces, and so $S_i\cap K=\emptyset$ for all $i=1,\ldots,d$. Thus, by Lemma \ref{lem:Z3.5}, we have that each $S_i$ is isotopic to $T$. If we have a multiple bridge splitting  $\{\Sigma'_0, S'_1, \Sigma'_1,S'_2,\Sigma'_2 \}$ with $S'_1$ and $S'_2$ isotopic to $T$, then  $\Sigma'_0\sqcup_{S'_1} \Sigma'_1$ or $\Sigma'_1\sqcup_{S'_2} \Sigma'_2$ would be isotopic to $\Sigma'_0$ or  $\Sigma'_2$.  Therefore, we can assume without loss of generality that $d=1$ and our multiple bridge splitting is  $\{\Sigma_0, S_1, \Sigma_1\}$.  This is a strongly irreducible multiple bridge splitting and one of $\Sigma_0$ or $\Sigma_1$ is contained in the cable space $C_{m,n}$ and the other is contained in the exterior of the 2-bridge knot, $E(K_{p/q})$.  Let $\Sigma_0\subset C_{m.n}$ and $\Sigma_1\subset E(K_{p/q})$. As $K$ is not contained in $E(K_{p/q})$, $\Sigma_1$ must be a Heegaard surface.  Since the tunnel number of a 2-bridge knot is 1, by Proposition \ref{prop:MSYsprop}, then $g(\Sigma_1)=g(E(K_{p/q}))=2$.  This is because the genus of a 3-manifold is the minimum genus of a Heegaard splitting.  We also have $g(S_1)=1$, and thus by equation $(\star )$, $$1=2+g(\Sigma_0)-1.$$

This tells us that $g(\Sigma_0)=0$, but $\Sigma_0$ needs to be a bridge surface for $C_{m,n}$ and thus, a Heegaard surface, which gives a contradiction as the Heegaard genus of $C_{m,n}$ is greater than zero, since $C_{m,n}$ is not $S^3$, the only 3-manifold with a genus zero Heegaard splitting.  This completes the final case of the proof.\end{pcases}\end{proof}


\section{Generalizations and Conjectures}\label{ch:generalize}

Montesinos knots are $n$ rational tangles connected as in Figure \ref{fig:mont}.  The 2-bridge knots are the special case in which  $n=1$ or 2.  A nontorus 2-bridge knot has primitive bridge spectrum $(2,1,0)$.  This generalizes to some Montesinos knots, as we see from Theorem \ref{thm:bsgmk}.  A special class of Montesinos knots are $n$-pretzel knots, having $\beta_i=1$ for all $i$; these also then have stair-step primitive bridge spectra if $\gcd\{\alpha_i\} \not = 1$. But these knots embed on a genus $n-1$ torus, see Figure \ref{fig:pretzelOnSurface}. Hence, Corollary \ref{cor:bsp} states that their bridge spectrum is $(n,n-1,\ldots,3,2,0)$. 

\begin{questionss}
Are pretzel knots exactly the subclass of Montesinos knots which have the property that ${\mathbf {\hat b}}(K)\not={\mathbf {b}}(K)$?
\end{questionss}

So, as with Theorem \ref{thm:BSofC2b}, we would like to know what happens to bridge spectra under the operation of cabling.  In general, we do not know. But following are examples of cabling that fail to produce an $m$-stair-step bridge spectrum.

Consider the pretzel knot, $K_4=K_4(2,-3,-3,3)$, as in Figure \ref{fig:pretzelOnSurface}, and cable $K_4$ by $T_{m,n}$, and let $K={\mathsf {cable}}(T_{m,n},K_4)$. Each time the knot $K_4$ passes to the back side of the genus 3 torus, we must introduce extra crossings to negate the twisting that we pick up when we force $K$ to lie on the same surface. See the following four illustrations in Figure \ref{fig:4images}.  Image A is obtained by blackboard framed cabling of a single strand.  Pulling this cable around all the way around the back to the front again gives us the images B and C.  Finally, if we were to let the crossings in the upper section of the cable pass through the surface to the front, we get the fourth figure. Notice that if the knot were not on the surface, all these crossings would cancel out and we would have $m$ strands with no crossings.  Hence, we have actually not introduced any new crossings into our knot $K$ with this change and all four images in Figure \ref{fig:4images} represent the same tangle.


    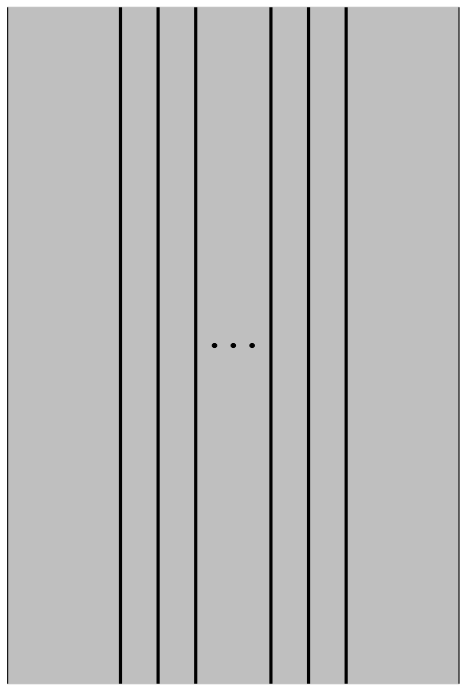
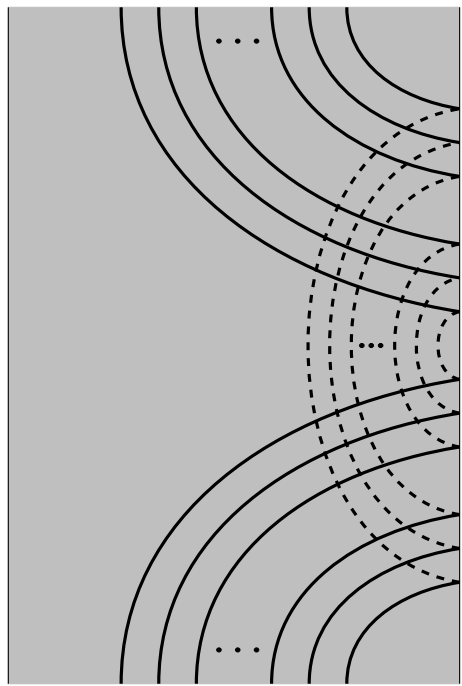
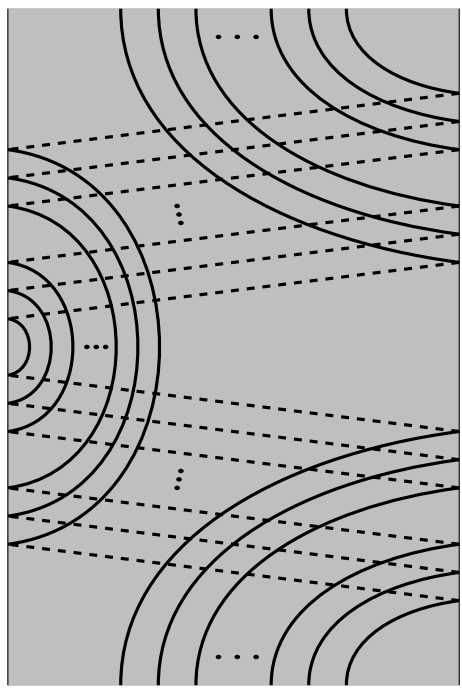
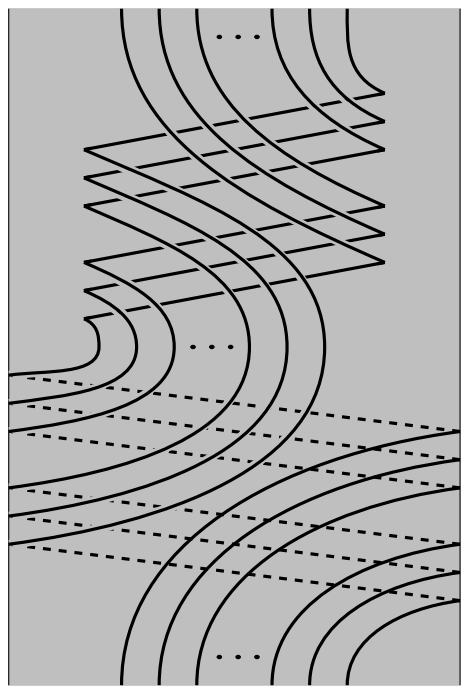
\begin{figure*}
        \centering
        \begin{subfigure}[b]{0.475\textwidth}
            \centering
\begin{tikzpicture}[yscale=.9,xscale=-1]

\draw [fill=lightgray,lightgray] (0,0) rectangle (6,10);

\draw (0,10) to (0,0);
\draw (6,10) to (6,0);

\foreach \b in {0,.25,.5}{
	\draw [fill] (2.75+\b,5) circle [radius=0.03];
}

\foreach \a in {1.5,2,2.5,3.5,4,4.5}
	\draw [very thick] (\a,10) to (\a,0);

\end{tikzpicture}
            \caption[Network2]%
            {{\small $m$ strands on cylindrical section of the torus.}}    
            \label{fig:mean and std of net14}
        \end{subfigure}
        \hfill
        \begin{subfigure}[b]{0.475\textwidth}  
            \centering 
\begin{tikzpicture}[yscale=-.9,xscale=-1]

\draw [fill=lightgray,lightgray] (0,0) rectangle (6,10);

\draw (0,10) to (0,0);
\draw (6,10) to (6,0);

\foreach \a in {1.5,2,2.5,3.5,4,4.5}{
	\draw [very thick] (\a,10) to [out=270, in=10] (0,10-\a);
	\draw [very thick] (\a,0) to [out=90, in=-10] (0,\a);
	\draw [very thick, dashed] (0,4+\a) to [out=-10, in=10] (0,6-\a);
}

\foreach \b in {0,.25,.5}{
	\draw [fill] (2.7+\b,9.5) circle [radius=0.03];
	\draw [fill] (2.7+\b,.5) circle [radius=0.03];
}
\foreach \b in {0,.125,.25}{
	\draw [fill] (1.05+\b,5) circle [radius=0.03];
}

\end{tikzpicture}
            \caption[]%
            {{\small Pulling all the strands around to the back.}}    
            \label{fig:mean and std of net24}
        \end{subfigure}
        \vskip\baselineskip
        \begin{subfigure}[b]{0.475\textwidth}   
            \centering 
\begin{tikzpicture}[yscale=.75,xscale=-1]

\draw [fill=lightgray,lightgray] (0,0) rectangle (6,12);

\draw (0,12) to (0,0);
\draw (6,12) to (6,0);

\foreach \a in {1.5,2,2.5,3.5,4,4.5}{
	\draw [very thick] (\a,12) to [out=270, in=10] (0,12-\a);
	\draw [very thick] (\a,0) to [out=90, in=-10] (0,\a);
	\draw [very thick, dashed] (0,6+\a) to (6,5+\a);
	\draw [very thick, dashed] (0,6-\a) to (6,7-\a);
	\draw [very thick] (6,5+\a) to [out=190, in=170] (6,7-\a);

}

\foreach \b in {0,.25,.5}{
	\draw [fill] (2.7+\b,11.5) circle [radius=0.03];
	\draw [fill] (2.7+\b,.5) circle [radius=0.03];
}

\foreach \b in {0,.125,.25}{
	\draw [fill] (4.7+\b,6) circle [radius=0.03];
}
\foreach \b in {0,.25,.5}{
	\draw [fill] (3.7+.1*\b,8.2+.6*\b) circle [radius=0.03];
	\draw [fill] (3.7+.1*\b,3.8-.6*\b) circle [radius=0.03];
}

\end{tikzpicture}
            \caption[]%
            {{\small Continue pulling the strands all the way to the front again.}}    
            \label{fig:mean and std of net34}
        \end{subfigure}
        \quad
        \begin{subfigure}[b]{0.475\textwidth}   
            \centering 
\begin{tikzpicture}[yscale=-.75,xscale=-1]

\draw [fill=lightgray,lightgray] (0,0) rectangle (6,12);

\draw (0,12) to (0,0);
\draw (6,12) to (6,0);

\foreach \a in {1.5,2,2.5,3.5,4,4.5}{
	\draw [very thick] (\a,12) to [out=270, in=10] (0,12-\a);
	\draw [very thick, dashed] (0,6+\a) to (6,5+\a);
	\draw [very thick] (1,6-\a) to (5,7-\a);

}
\foreach \a in {1.5,2,2.5,3.5,4,4.5}{	
	\draw [ultra thick,lightgray] (\a+.04,0) to [out=90, in=-30] (1.15,\a-.05);
	\draw [ultra thick,lightgray] (\a-.04,0) to [out=90, in=-30] (1,\a-.04);

	\draw [ultra thick,lightgray] (5.95,5.04+\a) to [out=190,in=90] (6.295-\a,6) to [out=270, in=150] (4.82,6.96-\a+.1);
	\draw [ultra thick,lightgray] (6,4.96+\a) to [out=190,in=90] (6.35-\a,6) to [out=270, in=150] (5.05,7.04-\a);

	\draw [very thick] (\a,0) to [out=90, in=-30] (1,\a);
	\draw [very thick] (6,5+\a) to [out=190,in=90] (6.3-\a,6) to [out=270, in=150] (5,7-\a);

}

\foreach \a in {1.5,2,2.5,3.5,4,4.5}{	
	\draw [very thick] (1,6-\a) to (1.1,6.025-\a);
	\draw [very thick] (5,7-\a) to (4.9,6.975-\a);
	
}

\foreach \b in {0,.25,.5}{
	\draw [fill] (2.7+\b,11.5) circle [radius=0.03];
	\draw [fill] (2.7+\b,.5) circle [radius=0.03];
		\draw [fill] (3.05+\b,6) circle [radius=0.03];

}

\end{tikzpicture}
            \caption[]%
            {{\small Let the strands on the back pass to the front. }}    
            \label{fig:mean and std of net44}
        \end{subfigure}
        \caption[ The average and standard deviation of critical parameters ]
        {\small Four figures which illustrate how to cable pretzel knots, which introduces extra twisting, as seen in the upper half of Figure D.} 
        \label{fig:4images}
    \end{figure*}

By our convention, the standard torus knot $T_{m,n}$ can be described as a braid on $m$ strands. The corresponding braid word is $(\sigma_{1}^{-1} \sigma_{2}^{-1}\cdots\sigma_{m-2}^{-1}\sigma_{m-1}^{-1})^n$, where $\sigma_i$ is the $i$-th strand crossing over the $i+1$-th strand going from top to the bottom as in Figure \ref{fig:braidgen} and \ref{fig:torusbraid}.  One should observe that in a pretzel knot, for each positive crossing, we get a twist region as in Figure \ref{fig:4images} D and corresponding braid word for the upper half is $(\sigma_{m-1} \sigma_{m-2}\cdots\sigma_{2}\sigma_{1})^m$.  This will have the effect of adding a region of twists that is $p$ copies of this twist region, where $p=\sum_{i=1}^jp_i$, i.e. $(\sigma_{m-1} \sigma_{m-2}\cdots\sigma_{2}\sigma_{1})^{p\cdot m}$.

For example, to obtain a knot which we know the bridge spectrum, let $J_4=J_4(6,-9,-9,9)$, which triples the number of crossings in each twist region of $K_4$.  Then, by Corollary \ref{cor:bsp}, as $\alpha=\text{gcd}(6,-9,-9,9)=3 \not = 1$, we have ${\mathbf {\hat b}}(J_4)=(4,3,2,1,0)$.  Then we see that $p=6-9-9+9=-3$, where a negative will represent the inverse braid word.

So, if we were to cable $J_4$ by $T_{2,-7}$, we will have $K={\mathsf {cable}}(T_{2,-7}, J_4)$ with $(\sigma_{1})^{-3\cdot2}=\sigma_1^{-6}$ crossings we are forced to introduce to make $K$ lie on the genus 3 surface.  Taking this along with the fact that there are crossings corresponding to $( \sigma_{1}^{-1})^{-7}=\sigma_1^7$ from the torus knot, we have $ \sigma_{1}^{-6}\sigma_1^{7}=\sigma_1$.  Thus,  all the crossings that we introduced by forcing the knot to lie in the surface cancel out with crossings from $T_{2,-7}$ and one crossing from the torus knot is left over.  Hence, the bridge spectrum of $K$, ${\bf b}(K)\leq (8, 6, 4, 1)$ instead of the $(8,6,4,2)$ we might have expected by cabling with an index 2 torus knot. Note that $b_1$ and $b_2$ are only upper bounds also, as we have not proved they are the minimum value.  We formalize this observation in the following conjecture.

\begin{figure}[h]
\begin{tikzpicture}[yscale=.5]

\foreach \b in {0,.25,.5}{
	\draw [fill] (2.75+\b,1.5) circle [radius=0.03];
}
\draw [very thick] (2,3) [out=270,in=90]  to (1.5,0);

\draw [ultra thick,white] (1.55,3) [out=270,in=90] to (2.05,0);
\draw [ultra thick,white] (1.45,3) [out=270,in=90] to (1.95,0);

\draw [very thick] (1.5,3) [out=270,in=90] to (2,0);

\foreach \a in {2.5,3.5,4}
	\draw [very thick] (\a,3) to (\a,0);

\end{tikzpicture}

\caption{The braid group generator $\sigma_1$ in the braid group $B_m$}\label{fig:braidgen}
\end{figure}
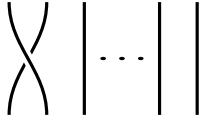

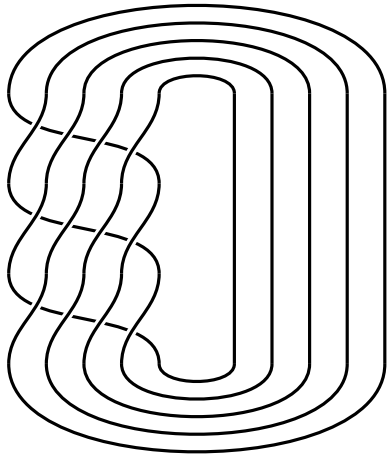
\begin{figure}[h]
\begin{tikzpicture}[yscale=.8]

\foreach \b in {0,1.5,3}{
\draw [very thick] (1.5,\b) [out=270,in=90] to (3.5,\b-1.5);

\foreach \a in {2,2.5,3,3.5}{

	\draw [ultra thick,white] (\a-.04,\b) [out=270,in=90] to (\a-.54,\b-1.5);
	\draw [ultra thick,white] (\a+.04,\b) [out=270,in=90] to (\a-.46,\b-1.5);
	
	\draw [very thick] (\a,\b) [out=270,in=90] to (\a-.5,\b-1.5);

}

}

\foreach \a in {1.5,2,2.5,3,3.5}{
\draw [very thick] (\a,3) [out=90,in=90] to (8-\a,3);
\draw [very thick] (\a,-1.5) [out=-90,in=-90] to (8-\a,-1.5);
\draw [very thick] (8-\a,-1.5) to (8-\a,3);

}

\end{tikzpicture}

\caption{The torus knot $T_{5,3}$ positioned in braid form, which has braid word $(\sigma_{1}^{-1} \sigma_{2}^{-1}\sigma_{3}^{-1}\sigma_{4}^{-1})^3$}\label{fig:torusbraid}
\end{figure}

\begin{conjecturess}\label{conj:pretzel1}
Let $K_j=K_j(p_1,p_2,\ldots,p_j)$ be a pretzel knot with stair-step primitive bridge spectrum. Let $T_{m,n}$ an $(m,n)$-torus knot and $p=\sum_{i=1}^jp_i$.  If $K={\mathsf {cable}}(T_{m,n}, K_j)$ is a $T_{m,n}$ cable of $K_j$, then the primitive bridge spectrum of $K$ is $${\mathbf {\hat b}}(K)=(mj,m(j-1),\ldots, 3m,2m, \min\{m,\vert mp-n\vert\},0).$$
\end{conjecturess}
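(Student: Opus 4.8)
The plan is to prove matching upper and lower bounds for each entry of $\hat{\mathbf{b}}(K)$, following the template of Theorem~\ref{thm:BSofC2b} but with the companion $2$-bridge knot replaced by the pretzel knot $K_j$ and the single genus-one computation replaced by an analysis at every genus $0\le g\le j$.

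\textbf{Upper bounds.} First I would exhibit the splittings realizing the claimed values. For $g\le j-2$, start from the genus $j-1$ embedding of $K_j$ of Figure~\ref{fig:pretzelOnSurface}, which by Corollary~\ref{cor:bsp} refines to a $(g,j-g)$-splitting of $K_j$; cabling each of the $j-g$ trivial arcs into $m$ parallel arcs (the preferred framing keeps a neighborhood of a trivial arc a trivial $m$-strand tangle) produces a $(g,m(j-g))$-splitting of $K$, so $\hat{b}_g(K)\le m(j-g)$, the case $g=0$ giving $\hat{b}_0(K)\le mj$. For the genus $j-1$ term I would make the twist-cancellation of Figure~\ref{fig:4images} precise: forcing the $T_{m,n}$-cable onto the genus $j-1$ surface inserts $(\sigma_{m-1}\cdots\sigma_1)^{pm}$, that is $p$ full twists of the $m$ strands, which combine with the torus braid $(\sigma_1^{-1}\cdots\sigma_{m-1}^{-1})^n$ to leave a region isotopic to the torus tangle $T_{m,\,mp-n}$; by Schubert's torus-knot bridge number this region needs exactly $\min\{m,|mp-n|\}$ maxima, so $\hat{b}_{j-1}(K)\le\min\{m,|mp-n|\}$. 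Finally, since $K_j$ embeds primitively on a genus $j$ surface (its primitive spectrum ends in $0$ at index $j$), Lemma~\ref{lem:Zlem6.1} embeds $K$ on that surface, giving $\hat{b}_j(K)=0$.

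\textbf{Lower bounds.} The bound $\hat{b}_0(K)\ge mj$ is immediate from the satellite inequality (Theorem~\ref{thm:bridgesat}), the pattern having index $m$ and $b_0(K_j)=j$. For $1\le g\le j-2$ I would reproduce the two-case division of Theorem~\ref{thm:BSofC2b}. Let $\Sigma$ be an irreducible genus-$g$ bridge surface. If $\Sigma$ is strongly irreducible, apply Zupan's Lemma~\ref{Z5.2} with $J=K$ and $S=T:=\partial E(K_j)$; in the generic subcase $\Sigma\cap C_{m,n}$ is a union of meridian disks, each meeting $K$ in $m$ points, and $\Sigma\cap E(K_j)$ is a bridge surface for the companion $K_j$ of some genus $g'\le g$, so that each of its maxima cables to $m$ maxima of $K$ and the stair-step spectrum of $K_j$ gives $b_g(K)\ge m\,b_{g'}(K_j)=m(j-g')\ge m(j-g)$. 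The other subcases, where $\Sigma\cap T$ is forced to be meridional, are to be excluded through Lemmas~\ref{lem:Z3.2} and~\ref{lem:Z3.3} together with the boundary-slope restrictions for essential surfaces in $E(K_j)$, exactly as in Theorem~\ref{thm:BSofC2b}. Combined with the upper bounds and strict decrease (Proposition~\ref{prop:BSdecrease}), this determines all entries except $g=j-1$; that entry I would obtain from the same machinery at genus $j-1$, where the companion piece is primitive and the meridian disks record the leftover $T_{m,mp-n}$ twisting, yielding $\hat{b}_{j-1}(K)\ge\min\{m,|mp-n|\}$.

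\textbf{Main obstacle.} The hard step is the weakly reducible case. In Theorem~\ref{thm:BSofC2b} it is tamed by meridional smallness: by Theorem~\ref{thm:cableMeridianSmall} the cable of a $2$-bridge knot is meridionally small, so the thin surfaces of the Hayashi--Shimokawa splitting (Theorem~\ref{thm:HS}) are disjoint from $K$, each is isotopic to $T$ by Lemma~\ref{lem:Z3.5}, and the genus formula of Theorem~\ref{thm:HS} collapses the multiple splitting to a single companion torus. \emph{Pretzel knots are in general not meridionally small}, so this collapse breaks down: thin surfaces may have meridional boundary, the analogue of Lemma~\ref{lem:Z3.5} is unavailable, and the Euler-characteristic bookkeeping no longer isolates the pieces. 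The same failure already threatens the strongly irreducible subcases above, where ruling out meridional boundary slopes implicitly leaned on the Hatcher--Thurston classification (Theorem~\ref{thm:HTthm}) and Lemma~\ref{lem:eulerneg}. Closing the gap therefore requires a Hatcher--Thurston-type description of the essential surfaces --- including the meridional ones --- in pretzel (or generalized Montesinos) knot exteriors together with their boundary slopes, in the spirit of Oertel's analysis of incompressible surfaces in Montesinos link complements; supplying that classification is precisely what is missing, which is why the result is offered here as a conjecture.
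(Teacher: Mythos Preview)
This statement is a \emph{conjecture} in the paper and the paper offers no proof. What the paper does provide is the heuristic for the upper bound at genus $j-1$ (the twist-cancellation of Figure~\ref{fig:4images} and the worked example $J_4$ cabled by $T_{2,-7}$), followed by the remark that the method of Theorem~\ref{thm:BSofC2b} ``might work for $n=3$, but not for $n\ge4$,'' citing Oertel's closed incompressible surfaces in Montesinos-knot complements. Your proposal is not a proof either but a strategy sketch that ends by correctly naming the obstruction and conceding the result is conjectural; in that sense you and the paper are aligned.

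One refinement of emphasis: you foreground the failure of \emph{meridional smallness} as the chief obstacle to the weakly reducible case. The paper instead highlights Oertel's non-boundary-parallel \emph{closed} incompressible surfaces. These block different steps of the template. Even if the thin surfaces $S_i$ of the Hayashi--Shimokawa decomposition could be made disjoint from $K$, the analogue of Lemma~\ref{lem:Z3.5} would still fail for $j\ge4$: a closed essential surface in $E(K_j)$ need not be isotopic to the companion torus $T$, so the thin surfaces need not be copies of $T$ and the genus equation of Theorem~\ref{thm:HS} no longer collapses the multiple splitting. The same closed surfaces already interfere with the strongly irreducible subcases, where your argument tacitly assumes (as in the $2$-bridge case via Theorem~\ref{thm:HTthm}) that essential pieces in $E(K_j)$ are controlled. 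Your last paragraph does gesture at this (``the analogue of Lemma~\ref{lem:Z3.5} is unavailable''), but Oertel's closed-surface obstruction is the one the paper singles out, and it is independent of the meridional question. Note also that the paper explicitly leaves the door open at $j=3$, where Oertel's surfaces do not arise; your sketch does not separate this case.
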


We conjecture that the degeneration we see in the last nonzero bridge number in Conjecture \ref{conj:pretzel1} is analogous to the degeneration Zupan observed in iterated torus knots, see \cite{Z1}.  But when we consider to 2-bridge knots, we have no degeneration, which in this analogy, perhaps corresponds to Montesinos knots which are {\em not} pretzel.  Thus, this leads us to Conjecture \ref{conj:BSofMontandPret}, which stems from question 3.5 in \cite{O}.

A note on the direction of the possible proof for Conjecture \ref{conj:pretzel1}: It seems that methods similar to the ones used in this dissertation in the proof of the Theorem \ref{thm:BSofC2b} might work for $n=3$, but not for $n\geq 4$.  This is a consequence of Oertel's work in \cite{oertel} where he shows that Montesinos knots with four or more branches have closed incompressible surfaces in their exterior.  This stands in sharp contrast to the situation for 2-bridge knots, which played a key role in our proof of Theorem \ref{thm:BSofC2b}.

\begin{conjecturess}\label{conj:BSofMontandPret}
The bridge spectrum of an $(m,n)$-cable of a Montesinos knot $M_0$ is $m$-stair-step if and only if $M_0$ is not a pretzel knot. \end{conjecturess}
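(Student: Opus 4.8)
The plan is to prove both implications by adapting the template of Theorem~\ref{thm:BSofC2b}, whose computation of the genus-one bridge number of a cable of a 2-bridge knot rested on two pillars: the strongly-irreducible/weakly-reducible dichotomy (via Lemma~\ref{Z5.2} and Theorem~\ref{thm:HS}), and a complete classification of essential surfaces in the companion complement (Theorem~\ref{thm:HTthm}). For a Montesinos companion $M_0$ the second pillar must be replaced by Oertel's classification of incompressible surfaces in Montesinos knot complements from \cite{oertel}; meanwhile, meridional smallness of the cable descends through Theorem~\ref{thm:cableMeridianSmall} only when $M_0$ is itself meridionally small, so the first task is to isolate exactly which $M_0$ carry the surface-theoretic hypotheses the argument needs. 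I expect the dichotomy to cleave the proof into a strongly-irreducible case, handled by intersecting the bridge surface with the companion torus $T=\partial E(M_0)$ and pushing the analysis into the cable space $C_{m,n}$ as in Subcases A--C above, and a weakly-reducible case, handled by the thin/thick decomposition of Theorem~\ref{thm:HS}.

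For the direction ``$M_0$ not a pretzel knot $\Rightarrow$ $m$-stair-step,'' the upper bound is the easy half: for each $g$ up to $b_0(M_0)$ I would exhibit an explicit $(g, m(b_0(M_0)-g))$-splitting of $K$ by taking the embedding of $M_0$ that realizes its genus-$g$ splitting and blackboard-framed cabling each arc into $m$ parallel strands, just as Figure~\ref{fig:genus2} does in the 2-bridge case; this bounds ${\mathbf b}(K)$ componentwise above by the sequence decreasing by $m$ at each step. The content is the matching lower bound $b_g(K)\ge m\,(b_0(M_0)-g)$ at every intermediate genus. Here I would run the dichotomy: in the strongly-irreducible subcases the meridian-disk claim forces $\Sigma\cap T$ to be meridional, contradicting the integral boundary slopes in Oertel's list, while in the remaining subcase each meridian disk of $C_{m,n}$ meets $K$ in $m$ points, so $|\Sigma\cap K| = m\cdot|\Sigma\cap T|$ drops the bound down from the corresponding bridge number of $M_0$. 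The non-pretzel hypothesis is what should guarantee that no framing cancellation can shave the last step, so that, in contrast to the pretzel situation of Conjecture~\ref{conj:pretzel1}, no degeneration term appears.

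For the converse direction, ``$M_0$ a pretzel knot $\Rightarrow$ not $m$-stair-step,'' I would make the framing computation sketched around Figure~\ref{fig:4images} rigorous. The integer (pretzel) tangles let one slide the $m$ cable strands around the genus-$(j-1)$ Heegaard surface, trading the torus-knot contribution $(\sigma_1^{-1}\cdots\sigma_{m-1}^{-1})^{n}$ against the induced twisting $(\sigma_{m-1}\cdots\sigma_1)^{pm}$ with $p=\sum_{i=1}^j p_i$; the surplus of $|mp-n|$ crossings then caps the last nonzero bridge number by $\min\{m,|mp-n|\}$, exactly as in Conjecture~\ref{conj:pretzel1}. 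Establishing that this upper bound is sharp and strictly below $m$ (so the spectrum genuinely fails to step by $m$) requires a second lower-bound argument of the same dichotomy flavor, now located at the last nonzero genus rather than at genus one.

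The hard part is the weakly-reducible case when $M_0$ has four or more branches. As noted before the conjecture, Oertel shows such complements contain \emph{closed} incompressible surfaces, so in the thin/thick decomposition the thin surfaces $S_i$ need no longer be boundary-parallel tori; the clean collapse to a single companion torus that drove Case~2 of Theorem~\ref{thm:BSofC2b} (via Lemma~\ref{lem:Z3.5} and meridional smallness) fails outright, and one must instead bookkeep the genus contributions of these closed surfaces in $g(\Sigma)=\sum g(\Sigma_i)-\sum g(S_i)$. A subsidiary obstacle is definitional: since $\min\{m,|mp-n|\}=m$ whenever $|mp-n|\ge m$, some cablings of a pretzel companion are already $m$-stair-step, so the statement must be read with the appropriate normalization of $n$ (or as asserting the existence of a cabling that fails), and pinning down that normalization is a prerequisite to even formulating a correct proof of the converse.
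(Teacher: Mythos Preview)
The statement you are attempting to prove is labeled a \emph{conjecture} in the paper, and the paper provides no proof of it; on the contrary, the author writes that ``there is less direct evidence in support of this conjecture at this time'' and explicitly notes that the methods of Theorem~\ref{thm:BSofC2b} ``might work for $n=3$, but not for $n\geq 4$'' because of Oertel's closed incompressible surfaces. So there is no paper proof to compare your proposal against.

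Your write-up is not a proof but a proof \emph{strategy}, and you yourself identify the gaps that keep it from closing. First, in the weakly-reducible case for a Montesinos companion with four or more branches, you concede that the thin surfaces $S_i$ need not be boundary-parallel tori, so the genus bookkeeping that drove Case~2 of Theorem~\ref{thm:BSofC2b} collapses; you offer no replacement argument, only the observation that one ``must instead bookkeep the genus contributions.'' Second, Montesinos knots with $\geq 4$ branches are not meridionally small in general, so Theorem~\ref{thm:cableMeridianSmall} does not apply and you cannot conclude the thin surfaces miss $K$; this undercuts the very first step of your weakly-reducible analysis. Third, for the converse you correctly observe that the literal statement is false as written: whenever $|mp-n|\ge m$ the cable of a pretzel knot can still be $m$-stair-step, so the conjecture requires a normalization you have not supplied. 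A proof of a statement that is not yet well-posed cannot be complete.

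In short, the genuine gap is that the conjecture is open, the paper does not claim otherwise, and your proposal---while a reasonable outline of where the difficulties lie---does not resolve any of them.
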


There is less direct evidence in support of this conjecture at this time.  In a similar vein, though, is the following conjecture.  We noticed that cabling is mostly well behaved in the cases we have studied.  There exists some degeneration at the end of the bridge spectrum, but we do not believe there can be degeneration elsewhere.   It would be of great interest to find counterexamples or a proof to the following conjecture.

\begin{conjecturess}\label{conj:cablingNice}
Let $J$ be a knot with bridge spectrum ${\mathbf b}(J)=(b_0(J),\ldots, b_g(J),0)$ and a cable $K={\mathsf {cable}}(T_{m,n}, J)$. Then $${\mathbf b}(K)=\left(m\cdot b_0(J),m\cdot b_1(J),\ldots,m\cdot b_{g-1}(J), m\cdot b_g(J),b_{g+1}(K),0\right).$$
\end{conjecturess}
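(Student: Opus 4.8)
The plan is to prove the two substantive assertions separately: that $b_h(K)=m\cdot b_h(J)$ for every $0\le h\le g$, and that after at most one additional handle the cable reaches bridge number zero, so that the displayed spectrum terminates as stated. Throughout, write $T=\partial E(J)$ for the companion torus, so that $E(K)$ is cut along $T$ into the cable space $C_{m,n}$ and the companion exterior $E(J)$, with $T=\partial_+C_{m,n}$. As in the proof of Theorem~\ref{thm:BSofC2b}, the upper and lower bounds are governed by entirely different mechanisms.

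\textbf{Upper bounds.} First I would establish $b_h(K)\le m\cdot b_h(J)$ for $0\le h\le g$ by cabling an optimal bridge position. Fix a $(h,b_h(J))$-splitting $(V_1,A_1)\cup_\Sigma(V_2,A_2)$ of $J$ and isotope $\Sigma$ so that $\Sigma\cap N(J)$ is a union of meridian disks of $N(J)$, one at each of the $2b_h(J)$ points of $\Sigma\cap J$. Replacing $N(J)$ by the solid torus $V$ carrying $T_{m,n}$ with the preferred framing, each such meridian disk meets $K$ in exactly $m$ points and each trivial arc of $A_i$ is replaced by $m$ mutually parallel arcs; after sliding the $n$ longitudinal twists into a single band one verifies these arcs remain trivial. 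This yields a $(h,m\cdot b_h(J))$-splitting of $K$. The case $h=0$ recovers Lemma~\ref{lem:genus0}. For the trailing zero, note $b_{g+1}(J)=0$ means $J$ isotopes into a genus $g+1$ Heegaard surface; applying Lemma~\ref{lem:Zlem6.1} (when a compressing disk meets $J$ once, and otherwise after a single stabilization) shows $K$ isotopes into a Heegaard surface of genus at most $g+2$, so $b_{g+2}(K)=0$.

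\textbf{Lower bounds.} The content is $b_h(K)\ge m\cdot b_h(J)$. Let $\Sigma$ be an irreducible genus $h$ bridge surface for $K$ realizing $b:=b_h(K)$; it is either strongly irreducible or weakly reducible. In the strongly irreducible case I would apply Lemma~\ref{Z5.2} with the knot $K$ and the essential surface $S=T$, producing the three familiar subcases. The objective in each is to force $\Sigma\cap C_{m,n}$ to be a collection of meridian disks of $V$: such a disk meets $K$ exactly $m$ times and meets $\partial_-C_{m,n}$ in a meridian, so $\Sigma\cap T$ is meridional and Lemmas~\ref{lem:Z3.2} and~\ref{lem:Z3.3} transfer slope information across $T$. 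When $\Sigma\cap C_{m,n}$ is meridional one obtains $|\Sigma\cap K|=m\cdot|\Sigma\cap T|$ while $\Sigma\cap E(J)$ is a genus $h$ bridge surface for $J$, whence $b\ge m\cdot b_h(J)$; when instead $\Sigma\cap E(J)$ carries the strongly irreducible piece, a Euler characteristic comparison (as in Subcase~B of Theorem~\ref{thm:BSofC2b}) should exclude the meridional slope. In the weakly reducible case I would invoke Theorem~\ref{thm:HS} to obtain a strongly irreducible multiple bridge splitting $\{\Sigma_0,S_1,\Sigma_1,\ldots,S_d,\Sigma_d\}$ with $g(\Sigma)=\sum_{i} g(\Sigma_i)-\sum_{i} g(S_i)$ and every thin surface incompressible, then argue each $S_i$ is isotopic to $T$, collapse to a single thin level, and reduce to a bridge splitting of $E(J)$ together with one of $C_{m,n}$ whose genera add up as required.

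\textbf{Main obstacle.} The decisive difficulty is that both reductions above exploited two features of $E(J)$ for $J$ a non-torus $2$-bridge knot that fail in general: by Theorem~\ref{thm:cableMeridianSmall} and Remark~\ref{rmk:HT} such an $E(J)$ is meridionally small and its only closed incompressible surface is boundary parallel. For a general companion $J$ neither holds — indeed Oertel's surfaces in four-strand Montesinos complements are exactly such closed incompressible surfaces — so in the weakly reducible case the thin surfaces $S_i$ need not be isotopic to $T$ (they may be closed essential surfaces inside $E(J)$, or, if $J$ is not meridionally small, meridional essential surfaces meeting $K$), and the collapse to $d=1$ breaks down; likewise an essential $\Sigma\cap E(J)$ need no longer descend to a genus $h$ bridge surface of $J$. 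Controlling these thin levels and the boundary behavior of $\Sigma\cap E(J)$ is where genuinely new input is needed. I would therefore first establish the conjecture under the hypothesis that $J$ is meridionally small with no non-peripheral closed incompressible surface, and then treat the transitional entry $b_{g+1}(K)$ separately, since its possible degeneration (as recorded for pretzel cables in Conjecture~\ref{conj:pretzel1}) prevents any uniform genus-$(g+1)$ statement.
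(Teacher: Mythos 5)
The statement you set out to prove is Conjecture~\ref{conj:cablingNice}: the paper contains no proof of it and explicitly solicits ``counterexamples or a proof,'' so there is no argument in the paper to compare yours against, and your submission must stand on its own. On its own it is a strategy, not a proof, and you say as much yourself. The parts that do work are the routine ones: cabling an optimal $(h,b_h(J))$-splitting gives $b_h(K)\le m\cdot b_h(J)$ for $h\le g$ (the $h=0$ case is Lemma~\ref{lem:genus0}), and Lemma~\ref{lem:Zlem6.1}, applied after at most one stabilization to create a compressing disk meeting $J$ once, gives $b_{g+2}(K)=0$. Everything of substance is in the lower bound $b_h(K)\ge m\cdot b_h(J)$, and there your plan is a transplant of the two-case architecture of Theorem~\ref{thm:BSofC2b}: the trichotomy of Lemma~\ref{Z5.2} in the strongly irreducible case, and Theorem~\ref{thm:HS} plus control of thin surfaces in the weakly reducible case.

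The obstacle you name is genuine and is precisely why this remains a conjecture. Every decisive exclusion in the genus-one proof is powered by facts special to 2-bridge companions: the Hatcher--Thurston classification (Theorem~\ref{thm:HTthm}, Remark~\ref{rmk:HT}), which rules out non-peripheral closed incompressible surfaces and meridional boundary slopes; meridional smallness of the cable via Theorem~\ref{thm:cableMeridianSmall}; and Lemma~\ref{lem:Z3.5}, which is itself a 2-bridge statement. For general $J$ the thin levels $S_i$ need not be copies of $T$, the collapse to $d=1$ fails, and nothing excludes meridional slopes for $\Sigma_K\cap E(J)$; the paper itself points to Oertel's closed essential surfaces in Montesinos exteriors as the reason its method stops before Conjecture~\ref{conj:BSofMontandPret}, and in its worked example it concedes that the entries of ${\mathbf b}({\mathsf{cable}}(T_{2,-7},J_4))\le(8,6,4,1)$ beyond $b_0$ are only upper bounds---so the equality $b_h(K)=m\cdot b_h(J)$ is unverified even in explicit cases. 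Two further gaps you did not flag: (i) in the favorable meridional subcase you assert that $\Sigma\cap E(J)$, capped off with meridian disks, is a genus-$h$ bridge surface for $J$, but triviality of the resulting arcs on both sides is not automatic for $h\ge 2$ (at genus one the paper only ever needs the weak count $b'\ge 1$, using $b_1(K_{p/q})=1$, far less than the $b'\ge b_h(J)$ your bound requires); (ii) the almost-transverse case of Lemma~\ref{Z5.2} is dismissed in the paper by an argument specific to genus one---a saddle curve splits the torus into an annulus and a disk---which does not carry over verbatim to $h\ge 2$. Your fallback, restricting to meridionally small companions with no non-peripheral closed incompressible surface and treating the transitional entry $b_{g+1}(K)$ separately in light of Conjecture~\ref{conj:pretzel1}, is the right shape for a partial theorem; but as submitted, your proposal establishes only the upper bounds.
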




\appendix




\bibliography{thesisbib}

\end{document}